\numberwithin{equation}{section}
\newcommand{\pp}{\mathbb P}
\newcommand{\cc}{\mathbb C}
\newcommand{\ox}{\mathcal{O}_X}
\newcommand{\A}{\mathbb A}
\newcommand{\V}{\mathcal V}
\newcommand{\Sym}{\mathrm{Sym}}
\newcommand{\rank}{\mathrm{rk}\,}
\newcommand{\Hom}{\mathrm{Hom}}
\newcommand{\Ker}{\mathrm{Ker}}
\newcommand{\Sec}{\mathrm{Sec}}
\newcommand{\Pic}{\mathrm{Pic}}
\newcommand{\Gr}{\mathrm{Gr}}
\newcommand{\OG}{\mathrm{OG}}
\newcommand{\tse}{\tilde{S_e}}
\newcommand{\tsu}{\tilde{U}_e}
\newcommand{\tE}{\tilde{E}}
\newcommand{\tF}{\tilde{F}}
\newtheorem{theorem}{{\textbf Theorem}}[section]
\newtheorem{prop}[theorem]{{\textbf Proposition}}
\newtheorem{cor}[theorem]{{\textbf Corollary}}
\newtheorem{lem}[theorem]{{\textbf Lemma}}
\newtheorem{rmk}[theorem]{{\textbf Remark}}
\newenvironment{remark}{\begin{rmk}\rm}{\end{rmk}}
\title[Lagrangian subbundles of odd rank orthogonal bundles]{Lagrangian subbundles of orthogonal bundles \\of odd rank over an algebraic curve}
\author{Insong Choe and George H. Hitching}
\begin{document}
\thispagestyle{empty}

\begin{abstract} An orthogonal bundle over a curve has an isotropic Segre invariant determined by the maximal degree of a Lagrangian subbundle. This invariant and the induced stratifications on moduli spaces of orthogonal bundles, were studied for bundles of even rank in \cite{CH3}. In this paper, we obtain analogous results for bundles of odd rank. We obtain a sharp upper bound on the isotropic Segre invariant. We show the irreducibility of the induced strata on the moduli spaces of orthogonal bundles of odd rank, and compute their dimensions. As a key ingredient of the proofs, we study the correspondence between Lagrangian subbundles of orthogonal bundles of even and odd rank.
\end{abstract}

\maketitle

\section{Introduction}

The aim of this paper is to study Lagrangian subbundles of orthogonal bundles of odd rank over an algebraic curve. The results in this paper, together with those in the earlier paper \cite{CH3} on orthogonal bundles of even rank, will provide a complete picture on the Segre-type stratifications corresponding to Lagrangian subbundles on moduli spaces of orthogonal bundles.

Let $X$ be a smooth irreducible algebraic curve of genus $g \ge 2$ over $\cc$. A vector bundle $V$ is called \textit{orthogonal} if there is a nondegenerate symmetric bilinear form $\omega \colon V \otimes V \to \ox$. Equivalently, $V$ is orthogonal if it admits an isomorphism $\omega \colon V \xrightarrow{\sim} V^*$ which is symmetric in the sense that $\omega$ coincides with its transpose $^t \omega \colon (V^*)^* \to V^*$. A subbundle $E$ of $V$ is called \textit{isotropic} if $E_x$ is an isotropic subspace of $V_x$ for each $x \in X$. An isotropic subbundle of $V$ may have rank at most $\left\lfloor \frac{\rank V}{2} \right\rfloor$, and is called \textit{Lagrangian} if it has rank equal to $\left\lfloor \frac{\rank V}{2} \right\rfloor$. The case when $\rank V$ is even was studied in \cite{CH3}, where the terminology ``isotropic subbundle of half rank'' was used instead of ``Lagrangian subbundle''.

Generalizing the Segre invariant for vector bundles of rank 2 (or ruled surfaces), we define
\[
t(V) \ := \ -2 \max \{\deg E : E \text{ a Lagrangian subbundle of } V \}.
\]
The coefficient 2 is inserted here to match the Segre invariant for vector bundles of rank 2. A Lagrangian subbundle $E$ of $V$ is called a \textit{maximal Lagrangian subbundle} if $t(V) = -2 \deg E$. The invariant $t(V)$ define natural stratifications on the moduli spaces of orthogonal bundles, which are the main object of study in this paper.

The analogous situation for vector bundles is well understood: The upper bound on the Segre invariant of a vector bundle was computed by Hirschowitz \cite{Hir} (see also \cite{CH1}). Furthermore, the stratification on the moduli space $U_X(r,d)$ of semistable vector bundles of rank $r$ and degree $d$ was studied in Brambila-Paz--Lange \cite{BPL} and Russo--Teixidor i Bigas \cite{RT}. The cases of symplectic bundles and orthogonal bundles of even rank were studied by the present authors in \cite{CH2, CH3}.

To state our main results, let us discuss the moduli space of orthogonal bundles over $X$ of odd rank $2n+1$. We exploit the relation between principal $\mathrm{SO}_{2n+1} \cc$-bundles and orthogonal bundles. 
By Serman \cite{Ser}, the forgetful map from the moduli space of semistable principal $\mathrm{SO}_{2n+1}$-bundles to that of $\mathrm{SL}_{2n+1}$-bundles is a (closed) embedding. The image is identified with the orthogonal locus $MO_X(2n+1)$ inside the moduli space $SU_X(2n+1, \ox)$ of semistable vector bundles over $X$ of rank $2n+1$ with trivial determinant.

The moduli space $MO_X(2n+1)$ has two connected components, distinguished by the second Stiefel--Whitney class $w_2(V ) \in H^2 (X, \mathbb{Z} / 2\mathbb{Z})$. Serre \cite{Serre} showed that
\[
w_2(V) \equiv h^0(X, V \otimes \kappa) + h^0(X, \kappa) \mod 2,
\]
where $\kappa \in \mathrm{Pic}^{g-1}(X)$ is any theta-characteristic. We will denote the two components by $MO_X(2n+1)^\pm$, where $MO_X(2n+1)^+$ is the component containing the trivial orthogonal bundle. The tangent space of $MO_X(2n+1)$ at $V$ is given by $H^1(X, \wedge^2 V)$. Hence $\dim MO_X(2n+1)^\pm = n(2n+1)(g-1)$. 

Firstly, we prove the following:

\newtheorem*{theorem1}{Theorem \ref{thm1}}
\begin{theorem1} \begin{enumerate}
\renewcommand{\labelenumi}{(\arabic{enumi})}
\item Let $E_1$ and $E_1$ be Lagrangian subbundles of an orthogonal bundle $V$. Then $\deg E_1$ and $\deg E_2$ have the same parity.
\item A semistable orthogonal bundle $V$ belongs to the component $MO_X(2n+1)^+$ (resp., $MO_X(2n+1)^-$) if and only if its Lagrangian subbundles have even degree (resp., odd degree). \qed \end{enumerate} 
\end{theorem1}
\noindent This easily follows from the corresponding result for $MO_X(2n)$ which was proven in \cite[Theorem 1.2]{CH3}, but we give another proof in Proposition \ref{parityeven} using the relationship between the two components of the orthogonal Grassmannian $\OG(n, 2n)$. 

Holla and Narasimhan \cite{HN} defined an invariant $s(V;P)$ more generally for arbitrary principal $G$-bundles $V$ with respect to a fixed parabolic subgroup $P$. When $G = \mathrm{O}_{2n+1}$ and $P$ is the maximal parabolic subgroup preserving a fixed isotropic subspace of dimension $n$,  one computes that $s(V;P) = \frac{1}{2}(n-1) \cdot  t(V)$.  
The upper bound on $s(V;P)$ given in \cite{HN} can then be interpreted as
\[
t(V) \  \le \ \frac{n(n+1)}{n-1} g.
\]
We obtain a sharp upper bound on $t(V)$, as follows:
\newtheorem*{cor1}{Corollary \ref{thm2}}
\begin{cor1}  
Any orthogonal bundle $V$ of rank $2n+1$ satisfies
\[ t(V) \ \le \ (n+1)(g-1)+3. \]
This bound is sharp in the sense that the two even numbers $t$ with
\[ (n+1)(g-1) \ \le \ t \ \le \ (n+1)(g-1) + 3 \]
correspond to the values of $t(V)$ for general $V \in MO_X(2n+1)^\pm$.  \qed
\end{cor1}
It is noteworthy that in most cases, this bound is strictly greater than Hirschowitz' bound on the Segre invariant of subbundles of vector bundles. This means that a general orthogonal bundle of rank $2n+1$ has the property that no maximal subbundle of rank $n$ is Lagrangian. For details, see Remark \ref{Hirsch}. A similar phenomenon for certain orthogonal bundles of even rank is described in \cite[Remark 5.5]{CH3}, but the difference of the bounds is smaller compared to the odd rank case. 
This is a contrast to the case of symplectic bundles: From \cite[Theorem 1.1 (1)]{CH3} it follows that the degree of a maximal Lagrangian subbundle of a general symplectic bundle of rank $2n$ coincides with the degree of a maximal rank $n$ subbundle of a general vector bundle of rank $2n$ and degree zero.

As mentioned, the invariant $t(V)$ induces a stratification on $MO_X(2n+1)^\pm$. For each even number $t$ with $0 < t \le (n+1)(g-1)+3$, we define
\[
MO_X(2n+1;t) \ := \ \{ V \in MO_X(2n+1) : t(V) = t \}.
\]
Since the invariant $t(V)$ is semicontinuous, these subloci are constructible sets. By Theorem \ref{thm1}, the loci $MO_X(2n+1;t_1)$ and $MO_X(2n+1;t_2)$ belong to the same component if and only if $t_1 \equiv t_2 \mod 4$. We show:

\newtheorem*{theorem2}{Theorem \ref{thm3}}
\begin{theorem2}  
For each even number $t$ with $0 < t \le (n+1)(g-1)+3$, the stratum $MO_X(2n+1;t)$ is nonempty and irreducible. For $t \le (n+1)(g-1)$, it has dimension $\frac{1}{2} n(3n+1)(g-1) +\frac{1}{2}nt$. \qed
\end{theorem2}

For each stratum, we prove the following result on the number of maximal Lagrangian subbundles: 

\newtheorem*{theorem4}{Theorem \ref{thm4}}
\begin{theorem4}  
Let $V$ be a general orthogonal bundle in $MO_X(2n+1;t)$. If $t <(n+1)(g-1)$, then $V$ has a unique maximal Lagrangian subbundle. If $t = (n+1)(g-1)$ (resp., $t > (n+1)(g-1)$) is even, $V $ has finitely many (resp., infinitely many) maximal Lagrangian subbundles.  \qed
\end{theorem4}
\noindent We also compute the dimension of the space of Lagrangian subbundles of a general $V \in MO_X(2n+1)$ when it is positive. 

All these results parallel the previous results in \cite{CH3} for orthogonal bundles of even rank. The proofs, however, are rather different. In the even rank case, the isotropic Segre invariant is determined by a geometric criterion \cite[\S 2.4]{CH3} analogous to Lange--Narasimhan \cite[Proposition 1.1]{LN}. It seems more difficult to give such a criterion directly in the odd rank case. Our strategy, instead, is as follows: 
As stated in Lemma \ref{OG}, there is a natural correspondence between Lagrangian subspaces of the orthogonal vector spaces $\cc^{2n+1}$ and $\cc^{2n+2}$. 
We describe a similar correspondence between Lagrangian subbundles of an orthogonal bundle of rank $2n+1$ and a certain orthogonal bundle of rank $2n+2$ (Lemma \ref{Lagsubbs}). Several statements can then be deduced from the even rank case, including a modified geometric condition on the Segre invariant (Lemma \ref{liftingcrit}), which suffices to prove the required statements.

This paper is organized as follows: In \S2, we quote some of the results on orthogonal bundles of even rank which are relevant in our context. In \S3, orthogonal extensions of odd rank are discussed, and some interplays between even and odd rank cases are explained. In \S4, parameter spaces of orthogonal extensions are constructed, and it is shown that a general point thereof represents a stable orthogonal bundle. Hence there is a (rational) moduli map from the parameter space of orthogonal extensions to the moduli space $MO_X(2n+1)$. In \S5, we study the condition  under which an orthogonal bundle admits two Lagrangian subbundles with a prescribed degree condition. This shows that the moduli map sends each parameter space to a dense subset of a Segre stratum. In \S6, the general fibers of this moduli map are studied and the dimension of the space of maximal Lagrangian subbundles of a general orthogonal bundle is computed.

\section{Orthogonal bundles of even rank}

In this section, we quote some results from \cite{CH3} and \cite{Hit1} on orthogonal bundles of even rank, which are relevant for our later discussion.\\
\\
\textbf{Notation:} Throughout, $X$ denotes a complex projective smooth curve of genus $g \ge 2$. If $0 \to E \to V \to F \to 0$ is an extension of vector bundles over $X$, we denote the class of $V$ in $H^1 (X, \Hom(F, E))$ by $[V]$.

\subsection{Orthogonal extensions of even rank} Let $F$ be a Lagrangian subbundle of an orthogonal bundle $V$ of rank $2n$. Since $V/F \ \cong \ (F^\perp)^* \ = \ F^*$, the bundle $V$ fits into the exact sequence $0 \to F \to V \to F^* \to 0$.
\begin{prop} \label{evenrankcrit} {\rm (}\cite[Criterion 2.1]{Hit1}{\rm )} \ \
Let $F$ be a simple bundle of rank $n$. An extension class $[V] \in H^1(X, \otimes^2 F)$ is induced by an orthogonal structure with respect to which $F$ is Lagrangian if and only if $[V]$ belongs to the subspace $H^1(X, \wedge^2 F)$. \qed
\end{prop}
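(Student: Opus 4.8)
The plan is to compare the extension $0 \to F \to V \to F^* \to 0$ with its dual $0 \to F \to V^* \to F^* \to 0$, both of which are extensions of $F^*$ by $F$ and hence carry classes in $H^1(X, F\otimes F)$. Over $\cc$ we have the splitting $F \otimes F = \Sym^2 F \oplus \wedge^2 F$, so $H^1(X, F\otimes F) = H^1(X,\Sym^2 F) \oplus H^1(X,\wedge^2 F)$, and it suffices to show that $F$ is Lagrangian for some orthogonal structure on $V$ precisely when the $\Sym^2 F$-component of $[V]$ vanishes. I would fix a Dolbeault description $V = F \oplus F^*$ with $\bar\partial_V = \bar\partial_0 + \left(\begin{smallmatrix} 0 & \beta \\ 0 & 0 \end{smallmatrix}\right)$, where $\bar\partial_0$ is the diagonal operator and $\beta \in A^{0,1}(\Hom(F^*,F)) = A^{0,1}(F\otimes F)$ represents $[V]$. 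The dual holomorphic structure on $V^* = F^* \oplus F$ is then governed by $-{}^t\!\left(\begin{smallmatrix} 0&\beta\\0&0\end{smallmatrix}\right)$, from which one reads off $[V^*] = -\sigma[V]$, where $\sigma$ denotes the transposition involution on $F\otimes F$.

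For the ``if'' direction, suppose $[V] \in H^1(X,\wedge^2 F)$ and choose an antisymmetric representative $\beta$ (so $\beta + {}^t\beta = 0$). I would define the hyperbolic pairing $\omega_0 = \left(\begin{smallmatrix} 0 & 1 \\ 1 & 0\end{smallmatrix}\right) \colon V \to V^*$, which is manifestly symmetric, nondegenerate, and makes $F$ isotropic. The only point to verify is that $\omega_0$ is holomorphic, i.e.\ that it commutes with the $(0,1)$-parts; a direct computation shows that $\bar\partial_{\Hom}\omega_0$ collapses to the single term $\beta + {}^t\beta$, which vanishes by antisymmetry. This produces the required orthogonal structure, and notably uses nothing about $F$ being simple.

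For the ``only if'' direction, suppose $\omega \colon V \xrightarrow{\sim} V^*$ is a symmetric isomorphism for which $F$ is Lagrangian. Then $\omega$ carries the subbundle $F \subseteq V$ into $\ker(V^* \to F^*) = F \subseteq V^*$, hence descends to a morphism of the two extensions, inducing $\phi \in \End F$ on the sub and $\psi \in \End F^*$ on the quotient. Dualizing the commutativity relations and using $\omega = {}^t\omega$ forces $\psi = {}^t\phi$, while commutativity of the diagram yields the identity $\phi_*[V] = \psi^*[V^*]$ in $H^1(X,F\otimes F)$. Writing $\phi_*$ as $\mathrm{id}\otimes\phi$ and $\psi^*$ as $\phi \otimes \mathrm{id}$ on $F\otimes F$, and substituting $[V^*] = -\sigma[V]$ together with $\sigma(\phi\otimes\mathrm{id}) = (\mathrm{id}\otimes\phi)\sigma$, this becomes $(\mathrm{id}\otimes\phi)[V] = -\sigma(\mathrm{id}\otimes\phi)[V]$.

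Here is where simplicity enters: since $F$ is simple, $\phi \in H^0(X,\End F) = \cc\cdot\mathrm{id}$ is a nonzero scalar, so the displayed identity collapses to $[V] = -\sigma[V]$, which says exactly that $[V] \in H^1(X,\wedge^2 F)$. The main obstacle I anticipate is purely bookkeeping: pinning down the sign in $[V^*] = -\sigma[V]$ and the precise way $\phi_*$ and $\psi^*$ act on the two tensor factors, since a wrong convention would land one in $\Sym^2 F$ rather than $\wedge^2 F$. The conceptual content is slight once these are fixed, with simplicity of $F$ being exactly the hypothesis that rules out a nonscalar $\phi$ which could otherwise spoil the antisymmetry conclusion.
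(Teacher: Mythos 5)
Your argument is correct, and its backbone --- comparing the extension $0 \to F \to V \to F^* \to 0$ with its dual via the identity $[V^*] = -\,^t[V]$, and using the morphism-of-extensions criterion $\phi_*[V] = \psi^*[V^*]$ --- is exactly the mechanism behind the cited Criterion 2.1 of \cite{Hit1} and behind the paper's own analogous Proposition \ref{cohomcrit} for the odd-rank case. The differences are in packaging. For the ``if'' direction, the paper's method produces some (a priori non-symmetric) $\omega$ from the cocycle identity and then symmetrizes it as $\frac{1}{2}(\omega + {}^t\omega)$, after which nondegeneracy follows from the five lemma; you instead pick an antisymmetric Dolbeault representative $\beta$ and write down the hyperbolic form $\left(\begin{smallmatrix} 0 & 1 \\ 1 & 0 \end{smallmatrix}\right)$ explicitly, checking holomorphicity reduces to $\beta + {}^t\beta = 0$. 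This is more concrete and, as you note, shows simplicity is not needed for this implication. For the ``only if'' direction, your version is arguably cleaner than the standard one in that it makes visible exactly where simplicity enters: the orthogonal structure need not induce the identity on the sub and quotient of the given extension, only some $\phi \in \End F$ and $\psi = {}^t\phi$, and simplicity forces $\phi$ to be a scalar; you should just make explicit that $\phi \neq 0$ because $\omega$ is injective on $F$ (you assert the scalar is nonzero without saying why). Your worry about sign conventions is legitimate but you have them right: with $\sigma$ the factor-swap on $F \otimes F = \Hom(F^*,F)$, the relation $[V^*] = -\sigma[V]$ together with $\sigma(\phi \otimes \mathrm{id}) = (\mathrm{id} \otimes \phi)\sigma$ lands you in the $(-1)$-eigenspace of $\sigma$, which is $\wedge^2 F$ as required.
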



\subsection{Lifting criterion for quotient line bundles}

The following result, which is a generalization of Lange--Narasimhan \cite[Proposition 1.1]{LN}, will be used in checking generality conditions. In \cite{LN}, the case of rank two extensions was discussed, for which $\pp E \cong X$.\\
\\
\textbf{Notation:} If $Z \subseteq \pp^N$ is a quasi-projective variety, then $\Sec^k Z$  denotes the secant variety given by the closure of the union of linear spaces spanned by $k$ general points  of $Z$.
\begin{prop} \label{linelift}
Let $E$ be a stable bundle of negative degree.
\begin{enumerate}
\renewcommand{\labelenumi}{(\arabic{enumi})}
\item {\rm(}\cite[Lemma 2.3]{CH3}{\rm)} There is a canonical rational map $\phi \colon \pp E \dashrightarrow \pp H^1(X, E)$ which is injective on a general fiber.
\item {\rm(}Adaptation of \cite[Theorem 4.4]{CH1}{\rm)} Let $F$ be an extension of $\ox$ by $E$. If there is a subsheaf $\ox(-D)$ of $\ox$ lifting to $F$ for some effective divisor $D$, then the class $[F]$ lies on the kernel of the surjection $H^1(X, E) \to H^1(X, E(D))$. In particular, it lies on $\Sec^d \pp E$, where $d = \deg D$.  \qed
\end{enumerate}
\end{prop}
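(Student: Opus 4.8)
The plan is to treat both parts with a single tool: the coboundary map obtained by twisting $E$ by an effective divisor, together with the description of $\phi$ it produces. For (1) I would recall that construction, since it is needed for (2). Fixing $x \in X$, the sequence
\[
0 \to E \to E(x) \to E(x)|_x \to 0
\]
has a connecting homomorphism $\delta_x \colon E(x)|_x \to H^1(X,E)$; since $E(x)|_x \cong E_x$ up to a one-dimensional twist, its projectivization is a rational map $\pp E_x \dashrightarrow \pp H^1(X,E)$, with indeterminacy locus the image of the evaluation $H^0(X,E(x)) \to E(x)|_x$. Letting $x$ vary produces $\phi$, and the fibrewise injectivity asserted in (1) is \cite[Lemma 2.3]{CH3}, which I would cite rather than reprove. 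The one feature needed downstream is that $\phi|_{\pp E_x}$ is exactly the projectivization of $\delta_x$, so that $\phi(\pp E_x)$ is the linear subspace $\pp(\mathrm{im}\,\delta_x)$.

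For the first assertion of (2) I would use pullback of extensions. The inclusion $\ox(-D) \hookrightarrow \ox$ induces a map $\mathrm{Ext}^1(\ox, E) \to \mathrm{Ext}^1(\ox(-D), E)$ which, under the identifications $\mathrm{Ext}^1(\ox,E) = H^1(X,E)$ and $\mathrm{Ext}^1(\ox(-D),E) = H^1(X,E(D))$, is precisely the twisting map of the statement. The pullback of $0 \to E \to F \to \ox \to 0$ is the extension $0 \to E \to F' \to \ox(-D) \to 0$ with $F' = F \times_\ox \ox(-D)$, and a lift $\ox(-D) \to F$ of the inclusion provides a splitting section of $F' \to \ox(-D)$. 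Hence the pullback class vanishes and $[F]$ lies in the kernel. Surjectivity of the twisting map is automatic, since the torsion cokernel gives $H^1(X, E(D)|_D) = 0$.

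For the secant assertion I would identify this kernel with a coboundary image and feed in the description of $\phi$. From $0 \to E \to E(D) \to E(D)|_D \to 0$ the kernel equals $\mathrm{im}(\delta_D)$, where $\delta_D \colon H^0(X,E(D)|_D) \to H^1(X,E)$ is the connecting map. Taking $D = x_1 + \cdots + x_d$ reduced, one has $E(D)|_D = \bigoplus_i E(D)|_{x_i}$, and naturality of the connecting homomorphism along $E(x_i) \hookrightarrow E(D)$ shows that the restriction of $\delta_D$ to the $i$-th summand is $\delta_{x_i}$ up to the fibre identifications. Thus $\mathrm{im}(\delta_D) = \sum_i \mathrm{im}(\delta_{x_i})$, whose projectivization is the linear span of the subspaces $\phi(\pp E_{x_i})$. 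Any class in it is therefore a sum of at most $d$ vectors, each lying over one fibre $\pp E_{x_i}$, and so projectivizes to a point on the span of $d$ points of $\phi(\pp E)$; this exhibits $[F]$ on $\Sec^d \pp E$.

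The step I expect to be most delicate is the last one when $D$ is non-reduced: there the splitting $E(D)|_D = \bigoplus_i E(D)|_{x_i}$ is replaced by a jet filtration, and $\delta_D$ records infinitely near directions rather than honest points of $\phi(\pp E)$. I would resolve this by a closure argument — specializing a reduced degree-$d$ divisor to $D$ and using that $\Sec^d \pp E$ is closed — which is also how the original \cite[Theorem 4.4]{CH1} proceeds. The same closure, together with irreducibility of the incidence variety of $d$ points of $\phi(\pp E)$ lying over distinct base points, reconciles the span of $d$ points over prescribed points with the definition of $\Sec^d \pp E$ via $d$ general points.
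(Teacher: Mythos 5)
Your argument is correct and follows exactly the route the paper intends: the paper states this proposition without proof, citing \cite[Lemma 2.3]{CH3} for (1) and \cite[Theorem 4.4]{CH1} for (2), and your reconstruction via the coboundary maps $\delta_x$, the pullback of the extension along $\ox(-D)\hookrightarrow\ox$, and the specialization/closure argument for non-reduced $D$ is precisely how those references proceed. Nothing to add.
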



\subsection{Two Lagrangian subbundles}

Suppose that an orthogonal bundle $V$ of rank $2n$ has two different Lagrangian subbundles $F$ and $\tilde{F}$.  Let $H$ be the locally free part of the intersection. Both subbundles $F$ and $\tilde{F}$ lie inside $H^\perp$, inducing a diagram:
 \begin{equation} \label{Hperp}
\begin{CD}
0 @>>> F @>>> H^\perp @>>> H^\perp /F @>>> 0 \\
 @. @AAA @AAA @AAA @. \\
0 @>>> H @>>> \tilde{F} @>>> \tilde{F}/H @>>> 0 
\end{CD}
\end{equation}
It is easy to check that $H^\perp /H$ is an orthogonal bundle of rank $2(n-r)$ and $F/H$ and $\tF/H$ are Lagrangian subbundles, yielding a diagram for some torsion sheaf $\tau$: 
 \begin{equation} \label{paritydiagram}
\begin{CD} 
0 @>>> F/H @>>> (\tF/H)^* @>>> \tau @>>> 0\\
@. @| @AAA @AAA @.\\
0 @>>> F/H @>>> H^\perp/H @>>> (F/H)^* @>>> 0 \\
 @. @.  @AAA @AAA @. \\
@. @. \tilde{F}/H @= \tilde{F}/H @.
\end{CD}
\end{equation}
Note that  $\deg (\tilde{F}/H) \le \deg (F/H)^*$. Therefore,
\begin{equation} \label{degh}
\deg H  \ge \frac{1}{2} (\deg F + \deg \tilde{F}).
\end{equation}
\begin{prop} \label{parityeven} {\rm(}\cite[Theorem 1.2]{CH3}{\rm)}
\begin{enumerate}
\renewcommand{\labelenumi}{(\arabic{enumi})}
\item Let $F$ and $\tF$ be Lagrangian subbundles of $V$. Then $\deg F$ and $\deg \tF$ have the same parity.
\item The Stiefel--Whitney class $w_2 (V)$ is trivial (resp., nontrivial) if and only if the Lagrangian subbundles of $V$ have even degree (resp., odd degree).
\item If $V$ is semistable, then $V$ belongs to $MO_X(2n)^+$ (resp., $MO_X(2n)^-$) if and only if its Lagrangian subbundles have even degree (resp., odd degree).
\end{enumerate}
\end{prop}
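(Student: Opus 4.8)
The plan is to prove parts (1) and (2) simultaneously by establishing the single congruence
\[
\deg F \ \equiv \ w_2(V) \pmod 2
\]
for \emph{every} Lagrangian subbundle $F \subset V$. Since the right-hand side depends only on $V$, this immediately forces any two Lagrangian subbundles to share the same degree parity, which is (1); reading off which parity occurs gives (2). Part (3) is then a formality: by definition $MO_X(2n)^+$ is the connected component containing the trivial orthogonal bundle, on which $w_2$ vanishes, and $MO_X(2n)^-$ is the component where $w_2$ is nontrivial, so the semistable statement is just (2) combined with this definition.

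The heart of the argument is a mod-$2$ computation of $h^0(X, V \otimes \kappa)$ for a theta-characteristic $\kappa$. Tensoring $0 \to F \to V \to F^* \to 0$ by $\kappa$ and taking cohomology, I would write
\[
h^0(V\otimes\kappa) \ = \ h^0(F\otimes\kappa) + h^0(F^*\otimes\kappa) - \rank \delta ,
\]
where $\delta \colon H^0(F^*\otimes\kappa) \to H^1(F\otimes\kappa)$ is the connecting map, given by cup product with the extension class $[V]$. Because $\kappa^2 = K_X$, Serre duality gives $H^1(F\otimes\kappa) \cong H^0(F^*\otimes\kappa)^*$, so $\delta$ becomes a bilinear form on $H^0(F^*\otimes\kappa)$, and in particular $h^0(F^*\otimes\kappa) = h^1(F\otimes\kappa)$. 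The crucial point is that this form is \emph{skew-symmetric}: the orthogonal structure forces $[V]$ to lie in the antisymmetric subspace $H^1(\wedge^2 F)$ (this implication of Proposition \ref{evenrankcrit} holds for any Lagrangian $F$, not only the simple ones), so that swapping the two arguments leaves the $\kappa$-contraction unchanged but reverses the sign of the pairing against $[V]$. Hence $\rank\delta$ is even, and
\[
h^0(V\otimes\kappa) \ \equiv \ h^0(F\otimes\kappa) + h^1(F\otimes\kappa) \ \equiv \ \chi(F\otimes\kappa) \ = \ \deg F \pmod 2 ,
\]
the last equality because $\chi(F\otimes\kappa) = \deg F + n(g-1) + n(1-g) = \deg F$.

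To finish I would invoke the even-rank form of Serre's parity formula (as in \cite{Serre, CH3}), namely $w_2(V) \equiv h^0(V\otimes\kappa) \pmod 2$, yielding $w_2(V) \equiv \deg F \pmod 2$ as required. A reassuring consistency check is that $h^0(V\otimes\kappa) \bmod 2$ could a priori depend on the choice of $\kappa$, yet the computation shows it equals $\deg F \bmod 2$, which is manifestly $\kappa$-independent; this is exactly the behaviour demanded of the intrinsic invariant $w_2(V)$.

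The step I expect to be the main obstacle is the sign bookkeeping identifying $\delta$ with a skew-symmetric form. One must verify that, under the Serre-duality identification, the pairing $(s,t) \mapsto \langle \delta(s), t\rangle$ really contracts the antisymmetric class $[V] \in H^1(\wedge^2 F)$ against $s$ and $t$ while pairing the two $\kappa$-factors symmetrically into $H^1(X,K_X)$, so that the antisymmetry of $[V]$ is the sole source of asymmetry. Once this is pinned down, the evenness of $\rank\delta$, and with it the whole proposition, follows. An alternative, more geometric route via the reduction diagrams \eqref{Hperp}--\eqref{paritydiagram} would reduce (1) to proving that the torsion length $\ell(\tau)$ is even in the generically transverse case; but that evenness seems harder to extract directly, so I would favour the cohomological argument above.
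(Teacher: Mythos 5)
Your argument is correct, and it is genuinely different from the proof given in the paper. The paper establishes (1) topologically: using the two components of $\OG(n,2n)$ and the fact that $\dim(\mathsf{F}\cap\mathsf{F}')\bmod 2$ is a component invariant, it deduces that $\dim(F_x\cap\tF_x)\equiv\rank H\bmod 2$ at \emph{every} point $x$, hence that the torsion sheaf $\tau$ in the diagram (\ref{paritydiagram}) has even local length everywhere and therefore even degree, which gives the parity statement; (2) and (3) are then dispatched as consequences. You instead run a Mumford--Serre style parity computation: the connecting map $\delta$ of $0\to F\otimes\kappa\to V\otimes\kappa\to F^*\otimes\kappa\to 0$ is cup product with $[V]$, which lies in $H^1(X,\wedge^2 F)$ because the orthogonal structure forces $[V]=-{}^t[V]$ (you are right that this forward implication of Proposition \ref{evenrankcrit} needs no simplicity hypothesis: $\omega$ is an isomorphism of extensions inducing the identity on $F$ and $F^*$), so under Serre duality $\delta$ is a skew form of even rank and $h^0(V\otimes\kappa)\equiv\chi(F\otimes\kappa)=\deg F\bmod 2$. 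Combined with the even-rank case of Serre's formula $w_2(V)\equiv h^0(V\otimes\kappa)+\rank V\cdot h^0(\kappa)\bmod 2$ (the paper only displays the odd-rank version, but the rank-dependent term vanishes mod $2$ here), this proves (1) and (2) in one stroke. Each approach buys something: yours identifies the common parity intrinsically with $w_2(V)$, so (2) is not an afterthought, and it sidesteps the Grassmannian component analysis entirely; the paper's argument yields the stronger pointwise statement that the fibrewise intersection dimension is constant mod $2$, which is reused immediately in Corollary \ref{rank1case} and cannot be recovered from your global cohomological count. The one step to keep honest is the sign bookkeeping for the skew-symmetry of $\delta$, which you correctly isolate; since both arguments of the pairing sit in $H^0$, no Koszul sign intervenes and the antisymmetry of $[V]$ is indeed the only source of sign, so the claim stands.
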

\begin{proof}
(2) and (3) are direct consequences of (1). The proof of (1) in \cite[Theorem 1.2]{CH3} was based on the observation that general two Lagrangian subbundles are related by elementary transformations associated to an antisymmetric principal part, which is of even degree. Here we will give another argument. 

Firstly, recall the fact that the orthogonal Grassmannian $\OG(n, 2n)$ consists of two disjoint irreducible components such that two Lagrangian subspaces $\mathsf{L}$ and $\mathsf{L}'$ belong to the same component if and only if $\dim (\mathsf{L} \cap \mathsf{L}') \equiv n \mod 2$ (see Lemma \ref{OG} (2)). The  Lagrangian subbundle $F$ yields a family of Lagrangian subspaces $F_x$ of $V_x$.  Therefore, if $F_{x_0}$ and $\tF_{x_0}$ belong to the same component for some $x_0 \in X$, then so do $F_x$ and $\tF_x$ for every $x \in X$. 

Let $H$ be the locally free part of $F_1 \cap F_2$ inside $V$ with $\rank H = r \ge 0$. By the above discussion, $\dim (F_x \cap \tF_x) \equiv r \mod 2$ for every $x \in X$.  Therefore, the torsion subsheaf of $\tau$ supported at $x$ is of even degree for each $x \in X$. Therefore, $\deg \tau$ is even. This shows that $\deg F$ and $\deg \tF$ have the same parity.
\end{proof}
\begin{cor} \label{rank1case}
Suppose that $\mathrm{rk}(F/H) = 1$ in (\ref{paritydiagram}). Then $\tau = 0$ and $F \cap \tF = H$, and $H^\perp /H \cong (F/H) \oplus (\tF/H)$.
\end{cor}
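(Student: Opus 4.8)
The plan is to reduce the whole statement to a single fibrewise transversality claim about the rank‑two orthogonal bundle $W := H^\perp/H$. Write $A := F/H$ and $B := \tF/H$; since $\rank(F/H)=1$ we have $n-r=1$, so $\rank W = 2$ and $A,B$ are isotropic line subbundles of $W$ with zero locally free intersection (that is what it means for $H$ to be the locally free part of $F \cap \tF$), so $A_x \ne B_x$ for general $x$. I claim it suffices to prove that in fact $A_x \ne B_x$ for \emph{every} $x \in X$. Granting this, all three assertions follow formally: the bundle map $A \oplus B \to W$ is then a fibrewise isomorphism between rank‑two bundles, hence $H^\perp/H \cong (F/H)\oplus(\tF/H)$; the sheaf $\tau$ in the top row of (\ref{paritydiagram}) is the cokernel of the composite $A \hookrightarrow W \twoheadrightarrow W/B \cong (\tF/H)^*$, whose support is exactly the locus where the fibre map $A_x \to (W/B)_x$ vanishes, i.e. where $A_x = B_x$, so $\tau = 0$; and $F_x \cap \tF_x = H_x$ for all $x$ gives $F \cap \tF = H$.

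The heart of the matter is therefore the fibrewise parity computation already exploited in the proof of Proposition \ref{parityeven}. By Lemma \ref{OG}(2), two Lagrangian subspaces of a fixed orthogonal space lie in the same component of the orthogonal Grassmannian if and only if the dimension of their intersection has a fixed parity. Applying this to the connected family $\{(F_x,\tF_x)\}_{x\in X}$ shows that $\dim(F_x \cap \tF_x) \bmod 2$ is independent of $x$, and since $H$ is the locally free part of $F \cap \tF$ its generic value is $r = \rank H$. Hence $\dim(F_x \cap \tF_x) \equiv r \pmod 2$ for all $x$. Using $H_x \subseteq F_x \cap \tF_x$ together with the identity $A_x \cap B_x = (F_x \cap \tF_x)/H_x$, we obtain $\dim(A_x \cap B_x) = \dim(F_x \cap \tF_x) - r$, which is therefore even at every point.

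Now the rank hypothesis finishes the argument: $A_x$ and $B_x$ are lines inside the two‑dimensional space $W_x$, so $\dim(A_x \cap B_x) \in \{0,1\}$, and evenness forces $\dim(A_x \cap B_x)=0$ everywhere. This is precisely the fibrewise transversality asserted above, so the proof is complete. As a consistency check, the resulting isomorphism $W \cong A \oplus B$ forces $\deg A + \deg B = \deg W = 0$, so the two Lagrangian line subbundles automatically acquire opposite degrees and $H^\perp/H$ is the hyperbolic bundle $(F/H) \oplus (F/H)^{-1}$.

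I expect the only real obstacle to be the fibrewise parity step; the three consequences above are then purely formal. It is worth stressing why a more elementary argument will not do: the support of $\tau$ coincides with the zero scheme of the pairing section $A \otimes B \to \ox$, i.e. a section of $A^* \otimes B^*$, and when $\deg A + \deg B < 0$ this line bundle has positive degree and a priori possesses sections vanishing somewhere. It is exactly the topological constraint coming from the two components of $\OG$ that rules out such zeros — and, in passing, pins down $\deg A + \deg B = 0$.
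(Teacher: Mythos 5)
Your proof is correct and follows essentially the same route as the paper: the key step in both is the parity of $\dim(F_x \cap \tF_x)$, locally constant by the two-component structure of $\OG(n,2n)$, which together with the bounds $n-1 \le \dim(F_x\cap\tF_x) \le n$ forces the intersection to have constant dimension $n-1$. The only (cosmetic) difference is at the end: you deduce the splitting directly from the fibrewise isomorphism $(F/H)\oplus(\tF/H) \to H^\perp/H$, whereas the paper invokes the classification of rank-two orthogonal bundles as $M \oplus M^{-1}$.
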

\begin{proof}
Under the assumption, we have
\[
n-1 \ \le \ \dim (F_x \cap \tF_x) \equiv n-1 \mod 2
\]
for each $x \in X$. Therefore, $\dim (F_x \cap \tF_x) = n-1$ for each $x \in X$. This implies that $\tau = 0$ and $F \cap \tF = H$. Since all orthogonal bundles of rank 2 are direct sums of the form $M \oplus M^{-1}$ where $M$ is a line bundle, the middle sequence in (\ref{paritydiagram}) splits.
\end{proof}
Let $p_H$ denote the surjection $H^1(X, \wedge^2 F) \ \to \ H^1(X, \wedge^2(F/H))$ induced by the quotient map $F \to F/H$. We recall the following results on this situation:

\begin{prop} \label{quote}
Let $F$ be a general stable bundle of degree $-f < 0$. Let $\Gr(2,F)$ be the Grassmannian bundle over $X$ of 2-dimensional subspaces of the fibers of $F$.
\begin{enumerate}
\renewcommand{\labelenumi}{(\arabic{enumi})}
\item {\rm(}\cite[Lemma 2.3]{CH3}{\rm)} \ There is a canonical rational map $\phi_a \colon \Gr(2, F) \dashrightarrow \pp H^1(X, \wedge^2 F)$, which is injective on a general fiber.
\item {\rm(}\cite[Criterion 2.11]{CH3}{\rm)} \ Let $V$ be an orthogonal bundle admitting $F$ as a Lagrangian subbundle, with extension class $[V] \in H^1(X, \wedge^2 F)$. Then $V$ has another Lagrangian subbundle $\tilde{F}$ of degree $-\tilde{f}$ inducing a diagram (\ref{Hperp}) for some $H$ of rank $\le n-2$ and  degree $-h$ if and only if  
\[
p_H \left( [V] \right) \ \in \ \Sec^{k}\Gr(2, F/H),
  \] 
where $k := \frac{1}{2} (f+\tilde{f}-2h) \ge 0 $.   \qed
\end{enumerate} 
\end{prop}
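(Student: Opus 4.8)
The plan is to handle the two parts in sequence, first constructing the map $\phi_a$ by a rank-two elementary modification and then reading off the criterion in (2) from its secant geometry, paralleling the line-bundle case of Proposition \ref{linelift}.

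For (1), I would associate to a point $(x,W) \in \Gr(2,F)$, where $W \subset F_x$ is a $2$-plane, a class in $H^1(X,\wedge^2 F)$ as follows. Tensoring the structure sequence of the point $x$ with $\wedge^2 F$ gives
\[
0 \ \to \ \wedge^2 F \ \to \ (\wedge^2 F)(x) \ \to \ (\wedge^2 F)(x)|_x \ \to \ 0,
\]
and the fibre $(\wedge^2 F)(x)|_x$ is canonically identified with $\wedge^2 F_x$ up to the one-dimensional twist by $\mathcal O(x)|_x$. The decomposable vector $\wedge^2 W$ spans a line in $\wedge^2 F_x$, and I send $(x,W)$ to the image of this line under the connecting map $\delta \colon H^0\big((\wedge^2 F)(x)|_x\big) \to H^1(X,\wedge^2 F)$. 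Since $F$ is stable of negative degree, $\wedge^2 F$ is semistable of negative degree and so $H^0(\wedge^2 F)=0$; hence $\ker\delta$ is the image of $H^0((\wedge^2 F)(x))$, and $\phi_a$ is defined wherever $\delta(\wedge^2 W)\neq 0$, a dense open locus. The only delicate point is injectivity on a general fibre: for distinct $2$-planes $W\neq W'$ in $F_x$ one must exclude that $\wedge^2 W$ and $\wedge^2 W'$ differ projectively by an element of $\ker\delta$, and for general $F$ this follows from a dimension count showing $h^0((\wedge^2 F)(x))$ is too small to join two points of the Plücker cone over $\Gr(2,F_x)$.

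For (2), I would first reduce to the case $H=0$ by passing to the orthogonal bundle $H^\perp/H$ of rank $2(n-r)$, under which $[V]$ maps to $p_H([V])$ and the two Lagrangian subbundles map to $F/H$ and $\tF/H$, now meeting in rank $0$. Writing $\overline F = F/H$, the quotient map restricts on $\tF/H$ to a sheaf injection $\tF/H \hookrightarrow \overline F^{\,*}$ with torsion cokernel $\tau$; from diagram (\ref{paritydiagram}) together with (\ref{degh}) one computes $\deg\tau = f+\tilde f-2h = 2k$. As noted in the proof of Proposition \ref{parityeven}, the antisymmetry of the orthogonal form forces this modification to be built from rank-$2$ antisymmetric packets, so $\tau$ has even length at each point and, for a general such configuration, splits into $k$ length-two packets at $k$ distinct points. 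Each packet is a $2$-plane in a fibre of $\overline F$, i.e. a point $(x_i,W_i)$ of $\Gr(2,\overline F)$, and unwinding the construction of $\phi_a$ shows that $p_H([V])$ is the sum of the classes $\phi_a(x_i,W_i)$; hence $p_H([V]) \in \Sec^k\Gr(2,\overline F)$. Conversely, given $p_H([V]) \in \Sec^k\Gr(2,\overline F)$, it is a combination of $k$ such classes; assembling them into a single antisymmetric principal part and reversing the modification produces a subsheaf whose saturation is a second Lagrangian subbundle $\tF$ of degree $-\tilde f$, its isotropy being guaranteed by the antisymmetry exactly as in Proposition \ref{evenrankcrit}.

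I expect the two genuinely hard steps to be the injectivity in (1) and, in (2), the bookkeeping that matches the even-length torsion $\tau$ of degree $2k$ precisely with a $k$-secant of the Grassmannian bundle. The latter requires showing that a general antisymmetric modification decomposes into $k$ independent rank-$2$ pieces with no collision lowering the secant order, which is where the generality of $F$ and a transversality argument for $\Sec^k\Gr(2,\overline F)$ enter decisively.
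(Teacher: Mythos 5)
This proposition is not proved in the paper at all: both parts are imported verbatim from \cite{CH3} (Lemma 2.3 and Criterion 2.11 there), and the \qed{} marks a quoted result. So there is no in-paper argument to compare against; what you have written is a reconstruction of the proofs in the cited source, and it does follow the same strategy as that source --- $\phi_a$ defined by the coboundary of the Pl\"ucker line $\wedge^2 W$ under $0 \to \wedge^2 F \to (\wedge^2 F)(x) \to (\wedge^2 F)(x)|_x \to 0$ (equivalently, antisymmetric principal parts with a simple pole), and the criterion read off from the elementary modification $\tF/H \hookrightarrow (F/H)^*$ with torsion cokernel of degree $2k$.

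There is, however, a genuine gap in the forward direction of (2) as you have set it up. The statement is an ``only if'' for \emph{every} Lagrangian $\tF$ inducing the diagram (\ref{Hperp}) with the stated numerics, but your argument only treats ``a general such configuration'' in which $\tau$ splits into $k$ reduced length-two packets at distinct points. For a fixed $V$ and $\tF$ the sheaf $\tau$ may be concentrated, with higher-order poles in the associated antisymmetric principal part, and you cannot perturb $V$ and $\tF$ to make it generic. One must show directly that the kernel of $H^1(X,\wedge^2(F/H)) \to H^1(X, \Hom(\tF/H, F/H))$ is contained in the affine cone over $\Sec^k \Gr(2, F/H)$ even for such degenerate principal parts (using that $\Sec^k$ is defined as a closure and a limiting argument for higher-order antisymmetric poles); this is precisely where the cited proof does its real work, and it is absent here. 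Two smaller points: in (1), injectivity on a general fibre requires the centre of projection $\Ker\,\delta$ to miss every secant line of the Pl\"ucker-embedded $\Gr(2,F_x)$, which a bound on $h^0((\wedge^2 F)(x))$ alone does not give unless that $h^0$ actually vanishes; and in the converse of (2) you still need to check that the lifted subsheaf saturates to a subbundle of the correct degree $-\tilde f$ and that its isotropy, established fibrewise from antisymmetry, persists after saturation.
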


\begin{remark} We mention some special cases: When $H = 0$, the statement yields a criterion for isotropic liftings of elementary transformations of $F^*$. If $\rank(F/H) = 2$ then $\Gr(2, F/H) \cong X$. 

It was overlooked in \cite{CH3} that the statement (2) is meaningless when $\rank(F/H) = 1$. But it is not difficult to show that this case does not arise for a general  $F$. The corresponding case for orthogonal bundles of odd rank will be discussed in the last part of the proof of Proposition \ref{finite}. \qed

\end{remark}

\subsection{Upper bound on the invariant $t(V)$}

\begin{prop} \label{evenbound} {\rm(}\cite[Theorem 1.3]{CH3}{\rm)} Any orthogonal bundle $V$ of rank $2n+2 \ge 4$ satisfies $t(V) \le (n+1)(g-1)+3$. \qed
\end{prop}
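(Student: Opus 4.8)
The plan is to bound the degree of a maximal Lagrangian subbundle directly, using the geometric criterion of Proposition \ref{quote} together with a secant variety dimension count. Let $E$ be a maximal Lagrangian subbundle of $V$, so $E$ has rank $n+1$ and $t(V) = -2\deg E$; write $\deg E = -e$. Since $V/E \cong E^*$, Proposition \ref{evenrankcrit} presents $V$ as an extension $0 \to E \to V \to E^* \to 0$ with class $[V] \in H^1(X, \wedge^2 E)$. I would argue by contradiction: assuming $e$ exceeds the asserted bound, I will produce a Lagrangian subbundle of strictly larger degree, contradicting maximality.

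The mechanism is Proposition \ref{quote}(2) applied with $H = 0$ (the case of isotropic liftings of elementary transformations of $E^*$): $V$ carries a second Lagrangian subbundle $\tilde E$ of degree $-\tilde e$, meeting $E$ in rank zero, precisely when $[V] \in \Sec^k \Gr(2, E)$ with $k = \tfrac12(e + \tilde e)$. To force such a $\tilde E$ with $\tilde e < e$ for a given $V$, it suffices to show that the secant variety fills the whole ambient space $\pp H^1(X, \wedge^2 E)$ for some admissible $k \le e-1$: then $[V]$ lies on it automatically, since the corresponding $\tilde e = 2k - e$ satisfies $\tilde e < e$. Thus the crux is a numerical comparison.

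For the count, Riemann--Roch gives (for $E$ general enough that $h^0(\wedge^2 E) = 0$)
\[
\dim \pp H^1(X, \wedge^2 E) \ = \ ne + \tfrac{n(n+1)}{2}(g-1) - 1,
\]
while $\dim \Gr(2, E) = 2n - 1$ and hence $\dim \Sec^k \Gr(2, E) \le 2nk - 1$. Taking $\tilde e = e - 2$, the largest degree strictly below $\deg E$ permitted by the parity constraint of Proposition \ref{parityeven}, gives $k = e - 1$, and one checks that the expected secant dimension reaches the ambient dimension exactly when $e \ge \tfrac{n+1}{2}(g-1) + 2$. Carrying out the parity and integrality bookkeeping at this threshold yields $t(V) = 2e \le (n+1)(g-1) + 3$, the extremal value $+3$ occurring precisely when $(n+1)(g-1)$ is odd.

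The main obstacle is to guarantee that the secant variety genuinely fills $\pp H^1(X,\wedge^2 E)$ once its expected dimension reaches the ambient dimension; equivalently, that $\Sec^k \Gr(2,E)$ is not defective in this range. The rational map $\phi_a$ of Proposition \ref{quote}(1), being injective on a general fiber, controls the dimension of the image of $\Gr(2,E)$ and is the natural starting point, but ruling out secant defectivity for this Grassmannian-bundle embedding is the delicate step and is what pins down the sharp constant. Two further points require care: the hypothesis $h^0(\wedge^2 E) = 0$ must be justified for a maximal $E$ (or the count adjusted when it fails, noting that a nonzero $h^0$ only enlarges the ambient space and so must be excluded to keep the bound sharp), and the threshold must be handled as an honest inequality over the integers so that the parity of $(n+1)(g-1)$ produces exactly the stated value.
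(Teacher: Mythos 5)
First, a point of comparison: the paper does not prove this proposition at all. It is quoted verbatim from \cite[Theorem 1.3]{CH3} with an immediate \textrm{qed}, and its only role here is to feed into Proposition \ref{oddbound} via $W = V \perp \ox$. So your proposal is an attempt to reprove the cited even-rank bound from scratch, and must be judged on its own merits rather than against an argument in this text.

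Your strategy --- apply Proposition \ref{quote}(2) with $H=0$ and argue that $\Sec^{e-1}\Gr(2,E)$ fills $\pp H^1(X,\wedge^2 E)$ once $e \ge \tfrac{1}{2}(n+1)(g-1)+2$ --- is the natural one, and the numerology is right: with $\dim \Gr(2,E)=2n-1$ and $\dim \pp H^1(X,\wedge^2 E)= ne+\tfrac{1}{2}n(n+1)(g-1)-1$, the expected dimension of $\Sec^{e-1}\Gr(2,E)$ reaches the ambient dimension exactly at that threshold, and the parity bookkeeping then returns $t(V)\le (n+1)(g-1)+3$. But the proof has a genuine hole precisely at the step you flag as ``delicate'': nothing in the proposal shows that $\Sec^{k}\Gr(2,E)$ actually attains its expected dimension. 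Expected dimension at least the ambient dimension does not imply filling, and for this type of variety the worry is not hypothetical --- the fibrewise model, namely the secant varieties of $\Gr(2,m)$ in the Pl\"ucker embedding, is classically defective (for instance $\Sec^2\Gr(2,6)$ is the Pfaffian cubic hypersurface in $\pp^{14}$, although its expected dimension is $17$). Since the sharp constant $+3$ is carried entirely by this non-defectivity claim, the argument is incomplete where it matters most. Two further unaddressed points: (i) Proposition \ref{quote}(2), as available in this paper, is stated for a \emph{general stable} bundle, whereas the maximal Lagrangian subbundle $E$ of an arbitrary orthogonal $V$ is neither general nor necessarily stable, so the criterion cannot be invoked as quoted; you would need either a version of the criterion for arbitrary $E$, or a reduction to general $V$ using irreducibility of the moduli space and semicontinuity of $t$, which in turn must deal with unstable $V$; and (ii) the hypothesis $h^0(X,\wedge^2 E)=0$ is not justified for the maximal $E$ of an arbitrary $V$, and when it fails the ambient space grows and the count breaks, so ``must be excluded'' is an assertion rather than an argument.
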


\section{Orthogonal extensions} \label{extlift}

\subsection{Orthogonal bundles of odd rank as iterated extensions}
The goal of this section is to to find a criterion similar to that in Proposition \ref{evenrankcrit} for constructing orthogonal bundles of odd rank as extensions. 

Let $E$ be any subbundle of an orthogonal bundle $V$. Then we obtain the sequence
\begin{equation} \label{basicext}
0 \to E \to V \to (E^\perp)^* \to 0.
\end{equation}
The isotropy of $E$ is by definition equivalent to $E \subseteq E^{\perp}$. 

\begin{lem} \label{Morth} Suppose $E$ is a Lagrangian subbundle of $V$. Then the quotient $E^\perp /E$ is isomorphic to $\ox$.
\end{lem}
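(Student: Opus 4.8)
The plan is to realise $E^\perp/E$ as the determinant line bundle of $V$ and then to invoke the triviality of $\det V$.

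First I would pin down the rank of the quotient. As $E$ is Lagrangian we have $\rank E = n$ and $\rank V = 2n+1$. The orthogonal form $\omega \colon V \xrightarrow{\sim} V^*$, composed with the surjection $V^* \to E^*$ dual to $E \hookrightarrow V$, gives a surjective map $V \to E^*$ whose kernel is, by definition, $E^\perp$; hence $\rank E^\perp = (2n+1) - n = n+1$. Isotropy of $E$ means $E \subseteq E^\perp$, so $E^\perp/E$ is a line bundle, which I denote $L$, and we obtain a filtration $0 \subset E \subset E^\perp \subset V$ with $V/E^\perp \cong E^*$.

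Next I would compute $L$ by taking determinants along this filtration:
\[
\det V \ \cong \ \det E \otimes L \otimes \det(V/E^\perp) \ \cong \ \det E \otimes L \otimes (\det E)^{-1} \ \cong \ L.
\]
Since $V$ carries an orthogonal structure of odd rank, its determinant is trivial (indeed the bundles under consideration lie in $MO_X(2n+1) \subset SU_X(2n+1, \ox)$), so $\det V \cong \ox$ and therefore $E^\perp/E \cong \ox$, as claimed.

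The step requiring care is the triviality of $\det V$. The form by itself yields only $(\det V)^2 \cong \ox$ through $\det \omega$; equivalently, $\omega$ descends to a nondegenerate symmetric pairing on $L$, forcing $L^2 \cong \ox$ but not $L \cong \ox$. A nontrivial $2$-torsion determinant would indeed produce a nontrivial quotient $E^\perp/E$, so the conclusion genuinely uses the oriented (special orthogonal) structure, under which $\det V \cong \ox$. The ancillary facts—surjectivity of $V \to E^*$ and the identification $V/E^\perp \cong E^*$—are immediate from $\omega$ being an isomorphism.
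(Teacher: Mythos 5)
Your proof is correct and follows essentially the same route as the paper's: both reduce the claim to a determinant computation along $E \subset E^\perp \subset V$ (the paper via the single sequence $0 \to E \to V \to (E^\perp)^* \to 0$, yielding $\det E^\perp = \det E$) together with the triviality of $\det V$. Your closing observation that the form alone only gives $(\det V)^2 \cong \ox$, so that the hypothesis $\det V \cong \ox$ is genuinely used, is a fair point which the paper passes over silently by simply invoking $\det V = \ox$.
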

\begin{proof}
From the sequence (\ref{basicext}) with $\det V = \ox$, we see that $\det E^\perp = \det E$. Since $E^\perp / E$ is a line bundle, it must be isomorphic to $\ox$. 
\end{proof}
Henceforth, we consider an arbitrary extension of vector bundles
\begin{equation} \label{extF}
0 \to E \stackrel{j}{\to} F \to \ox \to 0.
\end{equation}
For any extension $V$ of $F^*$ by $E$, consider the diagram
\begin{equation} \label{dual}
\begin{CD}
@. @. @. \ox \\
@. @. @. @VVV \\
0 @>>>  E @>>> V @>>> F^*  @>>> 0 \\
 @. @VjVV @.  @V^t jVV \\
0 @>>> F @>>> V^* @>>> E^* @>>> 0\\
@. @VVV\\
@. \ox
\end{CD}
\end{equation}

If $V$ has an orthogonal structure with respect to which $E$ is Lagrangian, then $F \cong E^\perp$, and the induced isomorphism $\omega \colon V \to V^*$ fits into the above diagram to yield commutative squares. Furthermore, the induced map $\bar{\omega} \colon \ox \to \ox$ described by the Snake Lemma is a (symmetric) isomorphism.
\begin{prop} \label{cohomcrit}
Let $V$ be a vector bundle fitting into the above diagram (\ref{dual}), with extension class $[V] \in H^1(X, F \otimes E)$. Then there is a symmetric map $\omega \colon V \to V^*$ (not necessarily an isomorphism) extending $j$ and $^t j$, if and only if the image of $[V]$ under the map $j_* \colon H^1(X, F \otimes E) \longrightarrow H^1(X, \otimes^2 F)$ lies on the subspace  $H^1(X, \wedge^2 F)$.
\end{prop}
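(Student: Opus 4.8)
The plan is to obtain $\omega$ in two stages: first produce a (not necessarily symmetric) morphism of extensions filling the ladder of (\ref{dual}), which is governed purely by a comparison of extension classes, and then symmetrize. Write $\iota_E\colon E \to V$, $\pi_{F^*}\colon V \to F^*$ for the maps of the top row and $\iota_F\colon F \to V^*$, $\pi_{E^*}\colon V^* \to E^*$ for those of the bottom row. A map $\omega \colon V \to V^*$ ``extends $j$ and $^t j$'' precisely when $\omega \circ \iota_E = \iota_F \circ j$ and $\pi_{E^*} \circ \omega = {}^t j \circ \pi_{F^*}$. Since the bottom row of (\ref{dual}) is the dual of the top row, I would invoke the standard homological fact that, with the outer vertical maps $j$ and $^t j$ fixed, such a middle map exists if and only if
\[
j_* [V] \ = \ (^t j)^* [V^*] \qquad \text{in } H^1(X, \Hom(F^*, F)) = H^1(X, \otimes^2 F),
\]
where $j_*$ is induced by $\mathrm{id}_F \otimes j \colon F \otimes E \to \otimes^2 F$ and $(^t j)^*$ by pullback along $^t j$. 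Moreover, when this holds the set of fillers is a torsor under $\Hom(F^*, F) = H^0(X, \otimes^2 F)$, so no isomorphism condition is ever imposed, consistent with the statement allowing $\omega$ to be non-invertible.

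The next step is to rewrite the right-hand side. Since $[V^*]$ is the dual of the extension $[V]$, under the canonical identification $H^1(X, \Hom(E^*, F)) = H^1(X, E \otimes F)$ it equals $-\sigma([V])$, where $\sigma$ is the transposition isomorphism $F \otimes E \cong E \otimes F$ and the sign is the one attached to dualizing an $\mathrm{Ext}^1$-class. A direct check on decomposable tensors shows that $(^t j)^*$ is induced by $j \otimes \mathrm{id}_F$ and that $(j \otimes \mathrm{id}_F)_* \circ \sigma = \sigma_F \circ (\mathrm{id}_F \otimes j)_*$, where $\sigma_F$ is the swap on $\otimes^2 F$. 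Hence the compatibility condition becomes $j_*[V] = -\,\sigma_F\big(j_*[V]\big)$, i.e. $j_*[V]$ has vanishing symmetric part, which is exactly the assertion $j_*[V] \in H^1(X, \wedge^2 F)$. As a sanity check, specializing $E = F$ and $j = \mathrm{id}$ recovers the even-rank Criterion \ref{evenrankcrit} (and pins the sign as $-1$, since the opposite sign would wrongly produce the symmetric part).

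Finally I would upgrade a filler to a symmetric one. Taking transposes of the two defining relations and using $^t\iota_E = \pi_{E^*}$, $^t\iota_F = \pi_{F^*}$ and $^t({}^t j) = j$, one sees that $^t\omega \colon V \to V^*$ again extends $j$ and $^t j$. Both defining conditions are affine-linear in $\omega$, so the average $\tfrac12(\omega + {}^t\omega)$ still extends $j$ and $^t j$, and it is symmetric by construction. Thus a symmetric $\omega$ exists as soon as any filler does, and combined with the previous paragraph this proves the equivalence in both directions.

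The main obstacle is bookkeeping rather than conceptual. The crux is pinning down the sign in $[V^*] = -\sigma([V])$, that is, fixing mutually compatible conventions for dualizing an extension class and for the splitting $\otimes^2 F = S^2 F \oplus \wedge^2 F$; it is precisely this sign that makes the \emph{antisymmetric} part survive, so it must be tracked carefully to ensure the criterion lands in $\wedge^2 F$ and agrees with Proposition \ref{evenrankcrit}.
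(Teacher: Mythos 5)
Your argument is correct and follows essentially the same route as the paper: both reduce the existence of a filler to the condition $j_*[V] = {}^t j^*[V^*]$, use $[V^*] = -\,{}^t[V]$ to turn this into antisymmetry of $j_*[V]$, i.e.\ $j_*[V] \in H^1(X,\wedge^2 F)$, and then symmetrize via $\tfrac12(\omega + {}^t\omega)$. The only cosmetic difference is that the paper checks the symmetrization step with an explicit cocycle computation, whereas you argue directly that ${}^t\omega$ is again a filler and average.
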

\begin{proof}
By a standard cohomological argument, there is a map $\omega$ extending $j$ and $^t j$ if and only if
\begin{equation} j_* [V] \ = \ ^t j^* [V^* ] \ \hbox{in} \ H^1 (X, \otimes^2 F). \label{twoone} \end{equation}
It is well known that $[ V^* ] = - \, ^t[V]$ in $H^1 (X, E \otimes F)$. Hence
\[ ^t j^* [ V^* ] \ = \ ^t j^* \left( -\,^t[V] \right) \ = \ -\, ^t\left( j_* [V] \right). \]
Therefore the condition (\ref{twoone}) becomes $j_* [V]  =  - \,^t\left( j_* [V] \right)$, that is, $j_* [V] \in H^1 (\wedge^2 F)$.
It remains to show that such an $\omega$ may be assumed to be symmetric. If $\omega$ exists, it is given with respect to a suitable open covering $\{ U_i \}$ by a cochain of matrices of the form
\[ \omega_i = \begin{pmatrix} 0 & 0 & I_n \\ 0 & \lambda & \star \\ I_n & \star & \star \end{pmatrix}, \]
where $\lambda$ is a scalar (possibly zero). If $U_i$ and $U_j$ are open sets in the covering and $f_{ij}$ is the transition function for $V$ over $U_i \cap U_j$, then the cochain $\{ \omega_i \}$ satisfies the cocycle condition
\begin{equation} ^t f_{ij}^{-1} \omega_j \ = \ \omega_i f_{ij} \label{cocycle} \end{equation}
Taking transposes, we obtain $^t \omega_{j} f_{ij}^{-1} \ = \ ^tf_{ij} \, ^t\omega_i$, whence
\begin{equation} ^t f_{ij}^{-1} \, ^t\omega_j \ = \ ^t \omega_i f_{ij}. \label{cocycletranspose} \end{equation}
Adding (\ref{cocycle}) and (\ref{cocycletranspose}), we see that $\frac{1}{2} ( ^t \omega + \omega )$ is a global section of $\Sym^2 V^*$. This is clearly a symmetric map extending $j$ and $^t j$, as required. \end{proof}

It is straightforward to check that
\[ (\wedge^2 F) \ \cap \ ( F \otimes E ) \ = \ \wedge^2 E \quad \hbox{and} \quad \frac{\wedge^2 F}{\wedge^2 E} \ \cong \ E, \]
the latter holding because $\rank E = \rank F - 1$. We obtain the associated diagram of cohomology groups:
\begin{equation}  \label{twotwo}
\begin{CD} 
H^0(X, \ox)  @>\partial>> H^1( X, E ) @>>> H^1(X, F) @>>> \cdots \\
@. @A{\rho}AA @AAA  @. \\
0 @>>> H^1(X, \wedge^2 F) @>>> H^1(X, \otimes^2 F) @>>> \cdots \\
@. @AAA @Aj_*AA  @. \\
\cdots @>>> H^1(X, \wedge^2 E) @>>> H^1(X, F \otimes E)  @>>> \cdots 
  \end{CD}
\end{equation}
\begin{lem} \label{h1inj}
Suppose that $E$ is a stable bundle of negative degree. Then $j_*$ and the two maps from $H^1(X, \wedge^2 E)$ are injective.
\end{lem}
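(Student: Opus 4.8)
The plan is to establish injectivity of all three maps by the same mechanism: for a short exact sequence $0 \to A \to B \to Q \to 0$ of bundles, the induced map $H^1(X,A) \to H^1(X,B)$ is injective if and only if $H^0(X,B) \to H^0(X,Q)$ is surjective, equivalently if and only if the connecting map $H^0(X,Q) \to H^1(X,A)$ vanishes. In each case I would reduce injectivity to the vanishing of a space of global sections and then invoke the stability of $E$. The one technical input I will use throughout is that $E \otimes E$ has no nonzero sections: since $E$ is stable of negative slope $\mu$, the bundle $E \otimes E$ is semistable (tensor products of semistable bundles being semistable in characteristic zero) of negative slope $2\mu$, hence $H^0(X, E \otimes E) = 0$. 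Passing to the direct summands $\wedge^2 E$ and $\Sym^2 E$ gives $H^0(X, \wedge^2 E) = H^0(X, \Sym^2 E) = 0$, and of course $H^0(X,E) = 0$ as well.

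For the two maps out of $H^1(X, \wedge^2 E)$ this is quick. For $H^1(X, \wedge^2 E) \to H^1(X, \wedge^2 F)$ I would apply the criterion to $0 \to \wedge^2 E \to \wedge^2 F \to E \to 0$, using the isomorphism $\wedge^2 F / \wedge^2 E \cong E$ already noted; since $H^0(X,E) = 0$, injectivity is immediate. For $H^1(X, \wedge^2 E) \to H^1(X, F \otimes E)$ I would use the sequence $0 \to \wedge^2 E \to F \otimes E \to (F \otimes E)/\wedge^2 E \to 0$. Here the quotient fits into an extension $0 \to \Sym^2 E \to (F \otimes E)/\wedge^2 E \to E \to 0$, arising by tensoring $0 \to E \to F \to \ox \to 0$ with $E$ and dividing by $\wedge^2 E \subseteq E \otimes E$; both outer terms have vanishing $H^0$, so the quotient has no sections and injectivity follows.

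The map $j_*$ is where the only real difficulty lies, and I expect it to be the main obstacle. It is induced by $0 \to F \otimes E \to \otimes^2 F \to F \to 0$, the tensor product of $0 \to E \to F \to \ox \to 0$ with $F$, so injectivity of $j_*$ amounts to vanishing of the connecting map $\delta \colon H^0(X,F) \to H^1(X, F \otimes E)$. The obstruction is that $H^0(X,F)$ need not vanish: the defining sequence of $F$, together with $H^0(X,E) = 0$, gives an exact sequence $0 \to H^0(X,F) \to H^0(X,\ox) \to H^1(X,E)$, so $H^0(X,F)$ is one-dimensional exactly when the extension class $[F] \in H^1(X,E)$ vanishes. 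I would resolve this by a dichotomy on $[F]$. If $[F] \neq 0$ then $H^0(X,F) = 0$ and $\delta = 0$ trivially. If $[F] = 0$ then $0 \to E \to F \to \ox \to 0$ splits, hence so does its tensor product with $F$, and again $\delta = 0$. In both cases $j_*$ is injective, completing the proof.
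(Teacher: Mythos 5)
Your proof is correct, and it runs on the same engine as the paper's --- reduce injectivity on $H^1$ to the vanishing of global sections of suitable quotients, then kill those sections using stability of $E$ and the semistability of $\otimes^2 E$ in characteristic zero --- but two of your three sub-arguments take a different route. For the map $H^1(X,\wedge^2 E) \to H^1(X, F \otimes E)$, the paper does not touch the quotient $(F\otimes E)/\wedge^2 E$ at all: it observes that the composition $H^1(X,\wedge^2 E) \to H^1(X,\wedge^2 F) \to H^1(X,\otimes^2 F)$ is injective (the second map because $\wedge^2 F$ is a direct summand of $\otimes^2 F$) and that this composition factors through $H^1(X, F\otimes E)$ by commutativity of the big diagram; your direct filtration $0 \to \Sym^2 E \to (F\otimes E)/\wedge^2 E \to E \to 0$ is a perfectly good substitute and is arguably more self-contained, at the cost of one extra diagram chase. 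For $j_*$, the paper simply asserts that the connecting map $\partial\colon H^0(X,\ox) \to H^1(X,E)$ is nonzero because its image is the extension class of $F$ --- which tacitly uses that $j$ is a nonzero class (true in context, since $j$ is taken in $\pp H^1(X,E)$) --- and concludes $H^0(X,F)=0$. Your dichotomy on $[F]$ handles the split case as well, so your argument proves the lemma exactly as stated, without the implicit non-splitness hypothesis; this is a small but genuine improvement in rigor, though in every application in the paper the two arguments coincide.
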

\begin{proof}
As $E$ is stable of negative degree, $H^0(X, E) = 0$ and the map $H^1(X, \wedge^2 E) \to H^1(X, \wedge^2 F)$ is injective. Since the composition
\[ H^1(X, \wedge^2 E) \to H^1(X, \wedge^2 F) \to H^1(X, \otimes^2 F) \]
is injective, so is the map $H^1(X, \wedge^2 E) \to H^1(X, F \otimes E)$. Finally, note that $\partial$ is the map whose image corresponds to the extension class (\ref{extF}) of $F$ in $H^1 (X, E)$. Therefore $\partial$ is nonzero, hence injective, and so $H^0(X, F) \cong H^0(X, E) = 0$. This shows that $j_*$ is injective.
\end{proof}
We write $\cc_j$ for the image of $H^0( X, \ox ) \cong \cc $ inside $H^1(X, E)$, because it corresponds to the class of the extension $0 \to E \stackrel{j}{\to} F \to \ox \to 0$ in $H^1 (X, E)$, and the above diagram arises from ${j}$. 
\begin{prop} Inside the extension space $H^1(X, F \otimes E)$, the locus of extensions $[V]$ admitting a symmetric map $\omega \colon V \to V^*$ extending $j$ and $^t j$ in the diagram (\ref{dual}) is identified with the preimage $\rho^{-1} \left( \cc_j \right)$.
\end{prop}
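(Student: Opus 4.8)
The plan is to combine the cohomological criterion of Proposition \ref{cohomcrit} with a diagram chase in (\ref{twotwo}). By that criterion the locus in question is exactly
\[
L \ := \ \{ [V] \in H^1(X, F \otimes E) : j_* [V] \in H^1(X, \wedge^2 F) \},
\]
where $H^1(X, \wedge^2 F)$ is viewed inside $H^1(X, \otimes^2 F)$ via the inclusion $\iota$ induced by $\wedge^2 F \hookrightarrow \otimes^2 F$. Both $j_*$ and $\iota$ are injective: the former by Lemma \ref{h1inj}, the latter because $\otimes^2 F = \wedge^2 F \oplus \Sym^2 F$ over $\cc$. Hence $j_*$ carries $L$ isomorphically onto $\iota\bigl( H^1(X, \wedge^2 F) \bigr) \cap \mathrm{im}(j_*)$, which corresponds under $\iota$ to the subspace $\iota^{-1}(\mathrm{im}\, j_*) \subseteq H^1(X, \wedge^2 F)$. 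So it suffices to prove the equality of subspaces $\iota^{-1}(\mathrm{im}\, j_*) = \rho^{-1}(\cc_j)$, which is the content behind the asserted identification.

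I expect the main obstacle to be verifying that the square of sheaf maps
\[
\begin{CD}
\wedge^2 F @>q>> E \\
@V\iota VV @VVjV \\
\otimes^2 F @>p>> F
\end{CD}
\]
commutes, where $q$ is the quotient with kernel $\wedge^2 E$ (so that $q_* = \rho$) and $p = \mathrm{id}_F \otimes (F \to \ox)$ is the surjection appearing in $0 \to F \otimes E \to \otimes^2 F \to F \to 0$. This is a purely local check: trivializing $F \cong E \oplus \ox$ over a small open set and writing each section as $a = a_E + \pi(a)s$ with $\pi \colon F \to \ox$ the projection and $s$ the local generator of $\ox$, one computes $p(\iota(a \wedge b)) = \pi(b)a - \pi(a)b = \pi(b)a_E - \pi(a)b_E = j(q(a \wedge b))$, the $s$-components cancelling. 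Thus $p \circ \iota = j \circ q$ (at worst up to a harmless nonzero scalar depending on the normalization of $\iota$), and applying $H^1$ gives $p_* \circ \iota = H^1(j) \circ \rho$, which is precisely the upper-left commuting square of (\ref{twotwo}).

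With this square in hand the claim follows by exactness alone. For $\eta \in H^1(X, \wedge^2 F)$ we have $\eta \in \iota^{-1}(\mathrm{im}\, j_*)$ if and only if $\iota(\eta) \in \mathrm{im}(j_*)$; by exactness of the middle row of (\ref{twotwo}) at $H^1(X, \otimes^2 F)$, coming from $0 \to F \otimes E \to \otimes^2 F \to F \to 0$, this is equivalent to $p_*(\iota(\eta)) = 0$. The commuting square rewrites this as $H^1(j)(\rho(\eta)) = 0$, that is $\rho(\eta) \in \ker H^1(j)$. Finally, exactness of the top row at $H^1(X, E)$ identifies $\ker H^1(j)$ with $\mathrm{im}(\partial) = \cc_j$, so $\rho(\eta) \in \cc_j$, i.e. $\eta \in \rho^{-1}(\cc_j)$. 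Reading the chain of equivalences in reverse yields the reverse inclusion, so $\iota^{-1}(\mathrm{im}\, j_*) = \rho^{-1}(\cc_j)$, and composing with the isomorphism $j_*$ gives the bijection $[V] \mapsto \iota^{-1}(j_*[V])$ between the stated locus and $\rho^{-1}(\cc_j)$, proving the proposition.
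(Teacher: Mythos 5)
Your argument is correct and follows essentially the same route as the paper's proof: identify the locus via Proposition \ref{cohomcrit} with $j_*\left( H^1(X, F \otimes E) \right) \cap H^1(X, \wedge^2 F)$, recognize this as the kernel of the composition $H^1(X, \wedge^2 F) \to H^1(X, \otimes^2 F) \to H^1(X, F)$ by exactness, and then use commutativity of (\ref{twotwo}) to identify that kernel with $\rho^{-1}(\cc_j)$. The only difference is that you verify the commutativity of the relevant square by an explicit local computation, which the paper leaves implicit in setting up the diagram.
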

\begin{proof} By Proposition \ref{cohomcrit}, this locus is identified with the intersection
\[ j_* \left( H^1(X, F \otimes E) \right) \ \cap \ H^1(X, \wedge^2 F). \]
This coincides with the kernel of the composed map $H^1 (X, \wedge^2 F) \to H^1 (X, \otimes^2 F) \to H^1 (X, F)$. By commutativity of (\ref{twotwo}), this kernel is precisely $\rho^{-1} \left( \cc_j \right)$. \end{proof}

Henceforth we write $\Pi_j := \rho^{-1} ( \cc_j )$. We have an exact sequence of vector spaces
\begin{equation} \label{twofour} 0 \to H^1 ( X, \wedge^2 E ) \to \Pi_j \to \cc_j \to 0. \end{equation}

Thus we have a criterion for the existence of a symmetric map $\omega \colon V \to V^*$. Now we want to know when such a map is an isomorphism.

\begin{lem} \label{notisom} Let $E$ be a stable bundle of negative degree, and consider an extension class $[V] \in \Pi_j$. 
Then the following statements are equivalent:
\begin{enumerate}
\renewcommand{\labelenumi}{(\arabic{enumi})}
\item The extension class $[V]$ belongs to $H^1 (X, \wedge^2 E)$.
\item The map $\ox \to F^*$ lifts to $V$. In other words, there is a map $\ox \to V$ whose composition with $V \to F^*$ coincides with the given map $\ox \to F^*$. 
\item The map $\omega \colon V \to V^*$ is not an isomorphism.
\end{enumerate} \end{lem}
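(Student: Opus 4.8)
The plan is to prove the two equivalences $(1)\Leftrightarrow(2)$ and $(2)\Leftrightarrow(3)$ separately: the first by a cohomological computation inside $H^1(X,F\otimes E)$, the second by a geometric argument with the form $\omega$. First I would record the cohomological meaning of $(2)$. The inclusion $\ox \to F^*$ is dual to the quotient $q\colon F \to \ox$ of (\ref{extF}), so the induced map $H^1(X, F\otimes E) \to H^1(X,E)$ is $(q\otimes \mathrm{id}_E)_*$, and $(2)$ holds exactly when $(q\otimes\mathrm{id}_E)_*[V]=0$. Since $q\otimes\mathrm{id}_E$ fits into $0 \to E\otimes E \to F\otimes E \to E \to 0$ and $H^0(X,E)=0$, this says precisely that $[V]$ lies in the image of $H^1(X,E\otimes E)$, which injects into $H^1(X,F\otimes E)$. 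For $(1)\Rightarrow(2)$ it then suffices to note $\wedge^2 E \subseteq E\otimes E = \ker(q\otimes\mathrm{id}_E)$, so that $(q\otimes\mathrm{id}_E)_*$ already kills the subspace $H^1(X,\wedge^2 E)$.

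For $(2)\Rightarrow(1)$ I would use the splitting $E\otimes E = \wedge^2 E \oplus \Sym^2 E$ to write $[V]=\alpha+\beta$ with $\alpha \in H^1(X,\wedge^2 E)$ and $\beta\in H^1(X,\Sym^2 E)$. On the subsheaf $E\otimes E$ the map $j_*$ of (\ref{twotwo}) restricts to $(j\otimes j)_*$, which respects the alternating/symmetric decomposition; hence $j_*\alpha \in H^1(X,\wedge^2 F)$ while $j_*\beta \in H^1(X,\Sym^2 F)$. Because $[V]\in\Pi_j$ forces $j_*[V]\in H^1(X,\wedge^2 F)$, the symmetric component $j_*\beta$ must vanish, and I would conclude $\beta=0$, hence $[V]=\alpha$, from the injectivity of $(j\otimes j)_* \colon H^1(X,\Sym^2 E)\to H^1(X,\Sym^2 F)$.

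This injectivity is the step I expect to be the main obstacle, and it is where the symmetry genuinely enters. I would deduce it from the three-step filtration of $\Sym^2 F$ attached to $0\to E \to F \to \ox \to 0$, whose graded pieces are $\Sym^2 E$, $E$ and $\ox$; this yields $0\to \Sym^2 E \to \Sym^2 F \to G \to 0$ with $G$ an extension $0\to E \to G\to \ox \to 0$. The desired injectivity is equivalent to $H^0(X,G)=0$, and a Čech cocycle computation identifies the class of $G$ in $H^1(X,E)$ as $2$ times the class $c_j$ of $F$ (the generator of $\cc_j$). Since $c_j\neq 0$ by Lemma \ref{h1inj} and we work over $\cc$, the extension $G$ is non-split, so $H^0(X,G)=0$; this closes $(2)\Rightarrow(1)$.

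It remains to prove $(2)\Leftrightarrow(3)$ geometrically, using the normal form $\omega_i = \left(\begin{smallmatrix} 0&0&I_n\\ 0&\lambda&\star\\ I_n&\star&\star\end{smallmatrix}\right)$. For $(3)\Rightarrow(2)$: if $\omega$ is not an isomorphism then $\det\omega=\pm\lambda$ vanishes, so $\lambda\equiv 0$, and the two blocks $I_n$ force $\omega$ to have constant rank $2n$; thus $L:=\ker\omega$ is a line subbundle of $V$. Since $\omega$ is symmetric, $\mathrm{coker}\,\omega\cong(\ker\omega)^*=L^*$, and a degree count gives $\deg L=0$. As $\omega|_E$ factors as $E\xrightarrow{j}F\hookrightarrow V^*$ it is injective, so $L\cap E=0$, and $L$ injects into $V/E=F^*$ with image in $\ker({}^tj)=\ox$; comparing degrees gives $L\xrightarrow{\sim}\ox$, so $L\hookrightarrow V$ lifts $\ox\to F^*$, which is $(2)$. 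For the converse I argue contrapositively: if $\omega$ is an isomorphism then $V$ is orthogonal with $E$ Lagrangian, so $E^\perp/E\cong\ox$ by Lemma \ref{Morth} and $\omega$ identifies the extension $0\to E\to E^\perp\to\ox\to0$ with $F$, which is non-split because $c_j\neq0$. A lift $\ox\to V$ of $\ox\to F^*$ would land in $E^\perp$ and meet $E$ trivially, producing a splitting $E^\perp\cong E\oplus\ox$ — a contradiction. Hence $(2)$ fails, and all three conditions are equivalent.
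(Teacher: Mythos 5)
Your proof is correct, but it reaches both equivalences by routes genuinely different from the paper's. For $(1)\Leftrightarrow(2)$ the paper does not decompose $\otimes^2 E$ into symmetric and alternating parts; instead it reads off from a large commutative diagram that $h^0(X,F)=0$ forces $H^1(X,\otimes^2 E)\cap H^1(X,\wedge^2 F)=H^1(X,\wedge^2 E)$ inside $H^1(X,\otimes^2 F)$, and then intersects with $\Pi_j$. Your version replaces that intersection formula by the injectivity of $(j\otimes j)_*$ on $H^1(X,\Sym^2 E)$, which you extract from the extension $0\to\Sym^2 E\to\Sym^2 F\to G\to 0$ with $[G]=2c_j\neq 0$; this is a legitimate substitute, and it isolates exactly where the non-splitness of $j$ (equivalently $h^0(X,F)=0$, the paper's input via Lemma \ref{h1inj}) and the characteristic-zero splitting $\otimes^2=\wedge^2\oplus\Sym^2$ are used. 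For $(2)\Leftrightarrow(3)$ the paper argues in one line via the Snake Lemma applied to (\ref{dual}): $\Ker\,\omega$ coincides with the kernel of the induced map $\bar{\omega}\colon\ox\to\ox$, so $\omega$ fails to be an isomorphism exactly when $\bar{\omega}=0$, which is then identified with the lifting condition. Your argument is longer — constant rank $2n$ from the normal form of Proposition \ref{cohomcrit}, $\mathrm{coker}\,\omega\cong(\Ker\,\omega)^*$, a degree count, and for the converse the non-splitness of $E^\perp\cong F$ — but it has the virtue of spelling out the implication ``lift exists $\Rightarrow\omega$ degenerate'' (via the forced splitting of $E^\perp$), which the paper's proof leaves rather terse. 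Both arguments rest on the same implicit hypothesis that the extension class $j$ is nonzero, so nothing is lost or gained there.
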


\begin{proof}
$(1) \Leftrightarrow (2)$: Firstly, consider the exact commmutative diagram
\[\begin{CD}
H^0(X, F)  @>>> H^1(X, E) @>>> H^1 \left(X, \frac{\otimes^2 F}{\otimes^2 E} \right) @>>> H^1(X, F)  \\
@. @A{\rho}AA @AAA @AAA @. \\
@. H^1(X, \wedge^2 F) @>>> H^1(X, \otimes^2 F) @>>> H^1(X, \Sym^2 F)  \\
@. @AAA @AAA @AAA @. \\
  @. H^1(X, \wedge^2 E) @>>> H^1(X, \otimes^2 E)  @>>> H^1 \left(X, \Sym^2 E \right)
  \end{CD}\]
As was seen in the proof of Lemma \ref{h1inj}, we have $h^0 (X, F) = 0$. It follows that
\begin{equation} H^1 (X, \otimes^2 E) \cap H^1 (X,  \wedge^2 F ) \ = \ H^1 (X, \wedge^2 E). \label{wedgeint} \end{equation}

Now the map $\ox \to F^*$ lifts to $V$ if and only if $[V]$ belongs to 
\[ H^1 (X, \otimes^2 E) \ = \ \Ker \left( H^1 (X, F \otimes E) \to H^1 (X, E ) \right). \]
Since $[V]$ belongs to $\Pi_j \subset H^1(X, \wedge^2 F)$, this condition is equivalent to $[V] \in H^1(X, \wedge^2 E)$ in view of (\ref{wedgeint}).

$(2) \Leftrightarrow (3)$: By the Snake Lemma, we know $\Ker \, \omega$ coincides with the kernel of the induced map $\bar{\omega} \colon \ox \to \ox$ in (\ref{dual}). Thus $\omega$ fails to be an isomorphism if and only if $\bar{\omega}$ is zero, which is equivalent to the lifting of $\ox \to F^*$ to $V$. \end{proof}

\begin{remark} \label{degenerate} The bilinear form on any $V$ satisfying the equivalent conditions of the lemma is degenerate. The class $[V]$ is of the form $^t j^*\left[ V_0 \right]$, where $[V_0] \in H^1(X, \wedge^2 E)$ defines an orthogonal extension in the sense of Proposition \ref{evenrankcrit}. Hence $V$ is an extension $0 \to \ox \to V \to V_0 \to 0$. However, if $V_0$ is semistable, then $V$ is S-equivalent as a vector bundle to the orthogonal bundle $V_0 \perp \ox$, which admits a nondegenerate symmetric form. If $V_0$ is stable as a vector bundle, then $V_0 \perp \ox$ is a stable orthogonal bundle by Ramanan \cite{Ram}. \qed 
\end{remark}

\noindent From the discussion in this section, we can conclude:

\begin{prop} \label{oddrankcrit}  \ 
Let $E$ be a stable bundle of rank $n$ and negative degree. Let $F$ be an extension of $\ox$ by $E$.
\begin{enumerate}
\renewcommand{\labelenumi}{(\arabic{enumi})}
\item An extension $0 \to E \to V \to F^* \to 0$ is induced by an orthogonal structure on $V$ with respect to which $E$ is Lagrangian, if and only if $[V]$ belongs to the complement of $j_* \left( H^1 (X, \wedge^2 E) \right)$ inside $\Pi_j$.
\item If $[ V' ]$ in $H^1 (X, \wedge^2 E)$ defines a semistable bundle $V'$, then the vector bundle defined by $j_* [V']$ is S-equivalent to a semistable orthogonal bundle. \qed \end{enumerate}
\end{prop}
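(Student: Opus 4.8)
The plan is to derive both statements by assembling the results already established in this section; the \qed attached to the statement signals that no fresh computation is needed. The engine for (1) is the combination of two facts. First, the (unlabelled) proposition that identifies the locus of classes $[V]$ admitting a symmetric map $\omega \colon V \to V^*$ extending $j$ and $^t j$ in (\ref{dual}) with the preimage $\Pi_j = \rho^{-1}(\cc_j)$. Second, Lemma \ref{notisom}, whose equivalence $(1)\Leftrightarrow(3)$ says that such an $\omega$ fails to be an isomorphism exactly when $[V] \in H^1(X,\wedge^2 E)$. The remaining conceptual input is that a \emph{nondegenerate} symmetric $\omega \colon V \xrightarrow{\sim} V^*$ is precisely an orthogonal structure on $V$. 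Throughout, I would perform all identifications inside $\Pi_j \subseteq H^1(X,\wedge^2 F)$, which is legitimate because $j_*$ is injective by Lemma \ref{h1inj}.

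For the ``if'' direction of (1), I would suppose $j_*[V] \in \Pi_j \setminus j_*(H^1(X,\wedge^2 E))$. The proposition characterising $\Pi_j$ then furnishes a symmetric $\omega$ extending $j$ and $^t j$, and since $[V] \notin H^1(X,\wedge^2 E)$, Lemma \ref{notisom} makes $\omega$ an isomorphism, hence an orthogonal structure. To see that $E$ is Lagrangian, I would use that $\omega$ extends $j$, so $\omega(E)$ lies in the image of $F \hookrightarrow V^*$ from the bottom row of (\ref{dual}); this image is exactly the annihilator of $E$, whence $E \subseteq E^\perp$. As $\rank E = n = \lfloor (2n+1)/2 \rfloor$, the subbundle $E$ is Lagrangian, and $V/E \cong F^*$ is the prescribed quotient.

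The ``only if'' direction is the step I expect to require the most care, and it is the main obstacle. Given an orthogonal structure with $E$ Lagrangian inducing the prescribed extension, one has $E^\perp \cong F$ as extensions of $\ox$ by $E$ (using Lemma \ref{Morth} together with $V/E \cong (E^\perp)^*$), and the isomorphism $\omega$ restricts on the isotropic $E$ to a map landing in the annihilator of $E$, i.e. into $F$. The delicate point is to check that this restriction coincides with $j$ and that the induced quotient map is $^t j$ — in other words, to reconcile the identification $E^\perp \cong F$ coming from the extension data with the one induced by $\omega$, after a suitable normalization of the form. Once this is in place, the proposition characterising $\Pi_j$ gives $j_*[V] \in \Pi_j$, and $\omega$ being an isomorphism forces $[V] \notin H^1(X,\wedge^2 E)$ by Lemma \ref{notisom}. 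Finally, I would read (2) off from Remark \ref{degenerate}: a class $[V'] \in H^1(X,\wedge^2 E)$ falls into the degenerate case of Lemma \ref{notisom}, so the associated rank $2n+1$ bundle is an extension $0 \to \ox \to V \to V' \to 0$ that is S-equivalent as a vector bundle to $V' \perp \ox$; since $V'$ is semistable of slope $0$, the direct sum $V' \perp \ox$ is semistable and carries an orthogonal structure (stable orthogonal when $V'$ is stable, by Ramanan \cite{Ram}), so $V$ is S-equivalent to a semistable orthogonal bundle.
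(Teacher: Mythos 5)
Your proposal is correct and follows essentially the same route as the paper, which proves Proposition \ref{oddrankcrit} implicitly (``From the discussion in this section, we can conclude'') by combining the unlabelled proposition identifying $\Pi_j$ with the locus of symmetric extensions, Lemma \ref{notisom} for the isomorphism criterion, and Remark \ref{degenerate} for part (2). The one point you flag as delicate — matching the identification $E^\perp \cong F$ with the maps $j$ and $^t j$ in the ``only if'' direction — is exactly the point the paper also passes over in the discussion preceding Proposition \ref{cohomcrit}, so your treatment is at the same level of rigor as the source.
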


\noindent In \S \ref{stability} we will discuss stability of such extensions.

\subsection{Alternative description of orthogonal extensions}

In the previous subsection, orthogonal extensions were described as iterated extensions: start from a stable bundle $E$ of rank $n$ and degree $<0$, choose an extension $j \in H^1(X, E)$ corresponding to a bundle $F$ of rank $n+1$, and then choose a class in $\Pi_j \subset H^1(X, \wedge^2 F)$. There is another way to describe the same extension, starting from $F$. In this subsection, we explain this alternative, which will be useful later. 

For each positive integer $e$, let $S_e'$ be the Brill-Noether locus 
defined by 
\[
S_e' \ = \ \{ F \in U_X(n+1, -e) : h^0(X, F^*) >0 \}.
\]
The following result is due to Sundaram \cite{Sun}, with notations adapted to our situation:
\begin{prop} Suppose $0 < e \le (n+1)(g-1)$.
\begin{enumerate}
\renewcommand{\labelenumi}{(\arabic{enumi})}
\item The locus $S_e'$ is an irreducible variety of codimension $(n+1)(g-1) -e +1$.
\item For a general $F \in S_e'$ we have $h^0(X, F^*)=1$.  \qed
\end{enumerate}
\end{prop}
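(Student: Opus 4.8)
Since $h^0(X,F^*) = \dim \Hom(F, \ox)$, a point of $S_e'$ is a semistable bundle $F$ admitting a nonzero map $F \to \ox$. For a general such $F$ this map will be surjective, exhibiting $F$ as an extension
\[ 0 \to E \to F \to \ox \to 0 \]
with $E$ of rank $n$ and degree $-e$. My plan is to parametrize $S_e'$ by these extensions and to extract irreducibility, dimension, and the generic value of $h^0(X,F^*)$ from the parameter space. This is the same iterated-extension picture as in (\ref{extF}), now read as a family over the choice of $E$.

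First I would let $E$ range over the irreducible moduli space $U_X(n,-e)$, of dimension $n^2(g-1)+1$, and restrict to the dense open locus of stable $E$ with $h^0(X,E)=0$. There $\chi(E) = -e-n(g-1)$ forces $h^1(X,E) = e+n(g-1)$ to be constant, so the extension spaces $H^1(X,E)$ assemble into the total space of a vector bundle, an irreducible variety $\mathcal{P}$ with
\[ \dim \mathcal{P} \ = \ n^2(g-1)+1 + \big(e+n(g-1)\big) \ = \ n(n+1)(g-1)+e+1. \]
Sending $(E,[F])$ to the bundle $F$ gives a map $\mathcal{P} \dashrightarrow U_X(n+1,-e)$ dominating $S_e'$, so the irreducibility of $\mathcal{P}$ yields that of (the dominant stratum of) $S_e'$. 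To pin down the dimension I would then compute the general fibre of $\mathcal{P} \dashrightarrow S_e'$: a general $F \in S_e'$ carries a unique quotient $q \colon F \to \ox$ up to scalar, whose kernel recovers $E$, while rescaling $q$ by $\lambda \in \cc^*$ rescales $[F]$ and leaves $E$ fixed. Hence the fibre is a single $\cc^*$, and
\[ \dim S_e' = n(n+1)(g-1)+e, \qquad \mathrm{codim}\, S_e' = (n+1)^2(g-1)+1-\dim S_e' = (n+1)(g-1)-e+1. \]
Statement (2) is precisely the assertion that this fibre is one copy of $\cc^*$, i.e. that $F$ has a single section up to scalar.

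The main obstacle is the justification of the word ``general'' at two points. First, one must know that a general extension $F \in \mathcal{P}$ is semistable, so that the moduli map is defined and dominant; this is exactly the stability analysis deferred to \S\ref{stability}, which I would invoke. Second, the fibre computation presupposes that the general $F \in S_e'$ has a unique, saturated section. I would control the exceptions by direct dimension counts on the analogous extension families: a non-saturated section $F \to \ox(-D)$ with $\deg D = d>0$ contributes a family of image-dimension $n(n+1)(g-1)+e-nd$, strictly below the main stratum, while the locus with $h^0(X,F^*)\ge 2$ arises from extensions of $\ox^{2}$ and has codimension $(n+1)(g-1)-e+3 \ge 3$ inside $S_e'$ (using $e \le (n+1)(g-1)$). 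Establishing these strict inequalities cleanly, and checking that the lower strata lie in the closure of the main one so that $S_e'$ is genuinely irreducible rather than merely equidimensional, is where the real care is required.
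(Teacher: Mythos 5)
The paper itself does not prove this proposition: it is quoted, with notation adapted, from Sundaram \cite{Sun}, and the \qed signals that no internal argument is given. So there is nothing in the paper to compare your route against; judged on its own terms, your plan is the natural one and the numerology is correct. The extension space over the stable locus of $U_X(n,-e)$ has dimension $n^2(g-1)+1+e+n(g-1)=n(n+1)(g-1)+e+1$, the generic fibre of the classifying map is a single $\cc^*$ exactly when $h^0(X,F^*)=1$, and this yields $\dim S_e'=n(n+1)(g-1)+e$ and the stated codimension $(n+1)(g-1)-e+1$. This is essentially how such higher-rank Brill--Noether loci are analysed in \cite{Sun}, and it is the same iterated-extension picture the paper uses elsewhere.

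Two of the points you defer are genuine gaps rather than routine checks. First, semistability of a general extension $0\to E\to F\to\ox\to 0$ is established in the paper only for $0<e\le\frac{1}{2}(n+1)(g-1)$ (Lemma \ref{Fstable}, not \S\ref{stability}, which concerns the orthogonal bundles $V$); the proposition is asserted for the full range $0<e\le(n+1)(g-1)$, and the higher-rank destabilization estimate in that lemma genuinely uses $e\le\frac{1}{2}(n+1)(g-1)$, so for larger $e$ you need a separate argument for dominance. Second, showing that the non-saturated, higher-$h^0$, and unstable-kernel strata have smaller dimension does not by itself give irreducibility: a priori $S_e'$ could have a component lying entirely inside the bad locus. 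The missing ingredient is the determinantal lower bound: $S_e'$ is a degeneracy locus, so every irreducible component has codimension at most $1\cdot\bigl(1-\chi(F^*)\bigr)=(n+1)(g-1)-e+1$ in $U_X(n+1,-e)$; combined with your upper bounds on the bad strata, this forces every component to meet the main extension stratum densely, which is what actually delivers irreducibility. When bounding the bad locus you must also stratify the unstable-kernel case by Harder--Narasimhan type, since restricting to stable $E$ in the parametrization silently discards part of $S_e'$ that still has to be accounted for.
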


We write $S_e$ for the dense subset of $S_e'$ with $h^0(X, F^*)=1$. Note that any $F \in S_e$ admits a distinguished subbundle $E = \Ker[F \to \ox]$.

\begin{lem} \label{Fstable} \ For $0 < e \le \frac{1}{2}(n+1)(g-1)$, a general extension $F \in H^1(X, E)$ of $\ox$ by a general $E \in U_X(n, -e)$ is stable and lies inside $S_e$. \end{lem}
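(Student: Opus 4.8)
The plan is to prove separately that $F \in S_e$ and that $F$ is stable, the first being short and the second requiring a genericity argument. For membership in $S_e$, dualizing (\ref{extF}) gives $0 \to \ox \to F^* \to E^* \to 0$, so $h^0(X,F^*) \ge h^0(X,\ox) = 1$ for \emph{every} such $F$, whence $F \in S_e'$ automatically. To upgrade this to $F \in S_e$ I must show $h^0(X,F^*)=1$ for general $E$. The same sequence gives $h^0(X,F^*) \le 1 + h^0(X,E^*)$, so it suffices that $h^0(X,E^*)=0$. Now $E^*$ is a general stable bundle of rank $n$ and slope $e/n > 0$ with $\chi(E^*) = e - n(g-1) \le 0$, the inequality $e \le n(g-1)$ following from $e \le \frac{1}{2}(n+1)(g-1)$ for $n \ge 1$. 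Since a general stable bundle has natural cohomology, $h^0(X,E^*) = \max(0,\chi(E^*)) = 0$, and therefore $h^0(X,F^*) = 1$.

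For stability, suppose $F$ is not stable and let $G \subset F$ be the maximal destabilizing subbundle, so that $G$ is semistable and saturated with $\mu(G) \ge \mu(F) = -e/(n+1)$. The composite $G \to \ox$ cannot vanish: otherwise $G \subseteq E$, and stability of $E$ would force $\mu(G) \le \mu(E) = -e/n < \mu(F)$. Hence $G \to \ox$ has image $\ox(-D)$ with $d := \deg D \ge 0$; as $\ox(-D)$ is a quotient of the semistable $G$, we get $-d = \mu(\ox(-D)) \ge \mu(G) \ge -e/(n+1)$, i.e. $d \le e/(n+1)$. Writing $k = \rank G$ and letting $\bar{G}_0 \subseteq E$ be the saturation of $\ker(G \to \ox)$ (of rank $k-1$), the subbundle $G/\bar{G}_0 \subset F/\bar{G}_0$ is a line subbundle lifting a subsheaf of $\ox$ of degree $\ge -d$. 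Here $F/\bar{G}_0$ is an extension of $\ox$ by $E/\bar{G}_0$ whose class is the image of $[F]$ under the natural surjection $H^1(X,E) \to H^1(X,E/\bar{G}_0)$. This reduces every destabilizing subbundle to a rank-one lifting, to which Proposition \ref{linelift}(2) applies.

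In the rank-one case $k=1$ (so $\bar{G}_0 = 0$) the conclusion is immediate: $[F]$ lies on $\Sec^{d}\pp E$, and since $\dim \Sec^{d}\pp E \le d(n+1) - 1 < e + n(g-1) - 1 = \dim \pp H^1(X,E)$ for every $d \le e/(n+1)$, a general $[F]$ avoids these finitely many secant loci. For $k \ge 2$ I would run the analogous estimate on the incidence variety of triples $(E,[F],G)$: it fibers over the choice of the pair $\bar{G}_0 \subset E$, whose dimension relative to $U_X(n,-e)$ is, for general $E$, the expected Quot number $\chi(\mathrm{Hom}(\bar{G}_0, E/\bar{G}_0))$, and then over the condition that the image of $[F]$ in $H^1(X,E/\bar{G}_0)$ lie on $\Sec^{d'}\pp(E/\bar{G}_0)$ with $d' \le e/(n+1)$. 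Comparing the total with $\dim U_X(n,-e) + \dim \pp H^1(X,E)$, the destabilizing locus has positive codimension, so a general $(E,[F])$ yields a stable $F$; the numerical estimate closes in the regime $e \le \frac{1}{2}(n+1)(g-1)$, which keeps all the intervening degrees and Euler characteristics in the range where natural cohomology, the expected Quot dimension, and the secant bounds remain valid.

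The hard part will be the higher-rank ($k \ge 2$) count, for two reasons. First, Proposition \ref{linelift} is stated for a \emph{stable} bundle of negative degree, whereas $E/\bar{G}_0$ is merely a quotient of the stable $E$ and need not be stable; I must either check that the underlying cohomological lifting statement survives for such quotients (its proof uses only $h^0(X,E/\bar{G}_0)=0$ and the secant description, which persist for general $E$) or restrict attention to general $\bar{G}_0$ for which $E/\bar{G}_0$ is stable of negative degree. Second, bounding the family of pairs $\bar{G}_0 \subset E$ by the expected Quot dimension requires generic smoothness for general $E$, for which one uses the vanishing $h^0(X,\mathrm{Hom}(E/\bar{G}_0,\bar{G}_0)) = 0$, a consequence of $\mu(E/\bar{G}_0) > \mu(E) > \mu(\bar{G}_0)$, together with the genericity of $E$. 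Once these technical points are settled, the codimension estimate of the previous paragraph completes the proof.
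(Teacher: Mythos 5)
Your treatment of membership in $S_e$ and of rank-one destabilizing subbundles is correct and essentially identical to the paper's: $h^0(X,E^*)=0$ for general $E$ gives $h^0(X,F^*)=1$, and a destabilizing line subbundle forces $[F]$ into $\Sec^d\pp E$ with $(n+1)d\le e$, which has dimension $\le e-1 < \dim\pp H^1(X,E)$. The genuine gap is the case $k=\rank G\ge 2$, which is the heart of the lemma and which you only outline. You reduce to an incidence-variety count over pairs $\bar{G}_0\subset E$ together with a secant condition in $\pp H^1(X,E/\bar{G}_0)$, flag the two technical obstacles yourself (Proposition \ref{linelift} requires a stable bundle, and the Quot-scheme dimension requires a vanishing), and then assert without computation that ``the numerical estimate closes'' for $e\le\frac{1}{2}(n+1)(g-1)$. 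That assertion is exactly what needs to be proved; as written, the decisive comparison of dimensions is absent, so the proof is incomplete.

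Moreover, the entire apparatus is unnecessary, and this is where your route diverges from the paper's. Set $K=\Ker[G\to F\to\ox]$ and $Q=G/K\subseteq\ox$, so $r:=\rank K=k-1\ge 1$ and $\deg Q\le 0$. Since $K$ is a subsheaf of the \emph{general} stable bundle $E$, Hirschowitz's bound gives
\[
-re-n\cdot\deg K\ \ge\ r(n-r)(g-1),
\]
and the hypothesis $e\le\frac{1}{2}(n+1)(g-1)$ yields $\frac{n-r}{n+1}e<r(n-r)(g-1)$; combining the two gives $\deg K<-\frac{(r+1)e}{n+1}$, hence
\[
\mu(G)\ =\ \frac{\deg K+\deg Q}{r+1}\ \le\ \frac{\deg K}{r+1}\ <\ -\frac{e}{n+1}\ =\ \mu(F).
\]
So for $k\ge 2$ no subbundle of $F$ containing a positive-rank subsheaf of $E$ can destabilize, \emph{regardless of the extension class} $[F]$; genericity of $E$ alone suffices, and only the rank-one case requires genericity of the extension. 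You should replace your incidence-variety sketch with this direct slope estimate.
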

\begin{proof}
Firstly we show that such an $F$ is stable. Suppose there is a destabilizing subbundle $G$ of $F$. Let $K = \Ker[G \to F \to \ox]$ and $Q = G/K$. Now $Q$ must be nonzero, for by stability of $E$ we have
\[ \mu(K) \ < \ \mu(E) \ = \ -\frac{e}{n} \ < \ -\frac{e}{n+1} \ = \ \mu (F). \]
If $K=0$, then $G = \ox(-D)$ for some effective divisor $D$ of degree $d$, and by Proposition \ref{linelift} the extension class $[F]$ lies on $\Sec^d \pp E$.  Since $G$ destabilizes $F$, we have $(n+1)d \le e$. But 
 \[
 \dim \Sec^d \pp E \ \le \ (n+1)d -1 \ \le \ e -1 \ < \ e + n(g-1) - 1 \ = \ \dim \pp H^1(X, E).
 \]
This shows that a general extension $F$ represented in $H^1(X, E)$ does not admit a destabilizing line subbundle. 

Now suppose both $K$ and $Q$ are nonzero. Write $r := \rank K$. We have $r < n$, and in particular $n \geq 2$. Since $E$ is general, a dimension count (see for example Hirschowitz \cite[Th\'eor\`eme 4.4]{Hir}) shows that
\begin{equation} \label{degI}
-re -n \cdot \deg K \ \ge \ r(n-r)(g-1).
\end{equation}
Since $e \le \frac{1}{2}(n+1)(g-1)$ and $r \ge 1$, we have also
 \[
 \frac{n-r}{n+1} e \ < \ r(n-r)(g-1).
 \]
Combining this with (\ref{degI}), we obtain
 \[
 n \cdot \deg (K) \ \le \ -re - r(n-r)(g-1) \ < \ -re - \frac{n-r}{n+1} e \ = \ - \frac{(r+1)n}{n+1} e,
 \]
and so $\deg(K) < -\frac{(r+1)}{n+1} e$. Therefore,
\[
\mu(G) \ = \ \frac{\deg(K) + \deg(Q)}{r+1} \ \le \ \frac{\deg K}{r+1} \ < \ \frac{-e}{n+1} \ = \ \mu(F).
\]

Thus a general extension $0\to E \to F \to \ox \to 0$ lies in $S_e'$. To see that in fact it belongs to $S_e$, consider the split extension $E^* \oplus \ox$ for a general $E$. Here
\[ \mu \left( E^* \right) \ = \ \frac{e}{n} \ \le \ \frac{n+1}{2n} (g-1) \ \le \ g-1. \]
For general $E$, therefore, $h^0(X, E^*) = 0$ and $h^0(X, F^*) = 1$. This implies that $h^0(X, F^*) = 1$ for a general extension $F$ as above. Therefore, we conclude that $F \in S_e$. 
 \end{proof}

\subsection{Interplay between even rank and odd rank}

Here we exploit and generalize some well-known facts on orthogonal Grassmannians. Let $\OG(n, 2n+1)$ (resp., $\OG(n, 2n)$) be the \textit{odd orthogonal Grassmannian} (resp., \textit{even orthogonal Grassmannian}) parameterizing Lagrangian subspaces of $\cc^{2n+1}$ (resp., $\cc^{2n}$). 
\begin{lem} \label{OG} \begin{enumerate}
\renewcommand{\labelenumi}{(\arabic{enumi})}
\item The odd orthogonal Grassmannian $\OG(n, 2n+1)$ is irreducible.
\item The even orthogonal Grassmannian $\OG(n, 2n)$ consists of two disjoint irreducible components $\OG(n, 2n)_1$ and $\OG(n, 2n)_2$ of the same dimension. Two Lagrangian subspaces $\mathsf{F}$ and $\mathsf{F}'$ belong to the same component if and only if $\dim( \mathsf{F} \cap \mathsf{F}') \equiv n \mod 2$.
\item There is a canonical isomorphism $\OG(n, 2n+1) \xrightarrow{\sim} \OG(n+1, 2n+2)_i$ for $i = 1, 2$.
\item For $\mathsf{E} \in \OG(n, 2n+1)$, let $\mathsf{F}_i \in \OG(n+1, 2n+2)_i$ be the Lagrangian subspace obtained via the above isomorphism.  Then 
\[
T_{\mathsf{E}} \OG(n, 2n+1) \ \cong \ T_{\mathsf{F}_i} \OG(n+1, 2n+2)_i \ \cong \ \wedge^2 \mathsf{F}_i^*.
\]
\end{enumerate}
\end{lem}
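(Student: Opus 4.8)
The plan is to establish Lemma \ref{OG} one part at a time, since each statement is essentially a piece of the standard structure theory of orthogonal Grassmannians, recast in a way that makes the tangent-space identification in (4) transparent. For part (1), I would use the fact that $\OG(n,2n+1)$ is a homogeneous space $\mathrm{SO}_{2n+1}\cc / P$ for the parabolic $P$ stabilizing a fixed Lagrangian (maximal isotropic) subspace. Since $\mathrm{SO}_{2n+1}\cc$ is connected, the quotient is irreducible (indeed smooth and projective). For part (2), the analogous homogeneous-space description gives $\OG(n,2n)$ as a quotient of $\mathrm{O}_{2n}\cc$, which has two connected components; the two $\mathrm{SO}_{2n}\cc$-orbits are the two irreducible components. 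The parity criterion $\dim(\mathsf F \cap \mathsf F')\equiv n \bmod 2$ is the classical fact distinguishing the two families of Lagrangian (maximal isotropic) subspaces, and I would either cite it or prove it by noting that this congruence class is invariant under $\mathrm{SO}_{2n}\cc$ and jumps by one when one crosses between components.

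The crux is part (3), the isomorphism $\OG(n,2n+1)\xrightarrow{\sim}\OG(n+1,2n+2)_i$, from which (4) will follow formally. I would construct it explicitly. Fix an orthogonal space $(\cc^{2n+1},q)$ and embed it as a nondegenerate hyperplane in $(\cc^{2n+2},Q)$; concretely, write $\cc^{2n+2}=\cc^{2n+1}\perp \langle v\rangle$ with $Q(v,v)=0$ is \emph{not} the right choice, so instead I fix a splitting where $v$ spans a line on which $Q$ restricts nondegenerately, or better, adjoin a hyperbolic correction. The cleanest model is to take $\cc^{2n+2}=\cc^{2n+1}\oplus\cc e$ with $e$ isotropic and paired with a vector $f\in\cc^{2n+1}$. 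To avoid such choices I would instead use the intrinsic recipe suggested by Lemma \ref{Morth}: given a Lagrangian $\mathsf E\subset\cc^{2n+1}$, the subspace $\mathsf E^\perp$ has dimension $n+1$ and $\mathsf E^\perp/\mathsf E$ is one-dimensional with a nondegenerate induced form. One then shows that adding the extra coordinate produces a canonical $(n+1)$-dimensional isotropic subspace $\mathsf F\subset\cc^{2n+2}$ containing (a lift of) $\mathsf E$, and that $\mathsf E\mapsto\mathsf F$ is a bijection onto one of the two components. Reversing the construction — intersecting a Lagrangian $\mathsf F\subset\cc^{2n+2}$ with the fixed hyperplane $\cc^{2n+1}$ — gives the inverse, once one checks that the intersection is always $n$-dimensional and Lagrangian in $\cc^{2n+1}$. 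The two choices of component $i=1,2$ correspond to the two ways of orienting the added hyperbolic summand, which is why both $\OG(n+1,2n+2)_1$ and $\OG(n+1,2n+2)_2$ receive an isomorphism from the single irreducible $\OG(n,2n+1)$.

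For part (4), the tangent-space formula, I would first recall the standard identification of the tangent space to a Lagrangian Grassmannian at a point $[\mathsf F]$ with $\wedge^2\mathsf F^*$: a nearby Lagrangian is the graph of a map $\mathsf F\to\mathsf F^\perp/\mathsf F\cong\mathsf F^*$ (using that the form identifies the normal data with $\mathsf F^*$), and the Lagrangian condition forces this map to be symmetric in the symplectic case and antisymmetric in the orthogonal case, yielding $\wedge^2\mathsf F^*$. This gives the second isomorphism $T_{\mathsf F_i}\OG(n+1,2n+2)_i\cong\wedge^2\mathsf F_i^*$ directly. The first isomorphism $T_{\mathsf E}\OG(n,2n+1)\cong T_{\mathsf F_i}\OG(n+1,2n+2)_i$ is then simply the derivative of the isomorphism from part (3), so no further computation is needed beyond invoking that the map in (3) is an isomorphism of smooth varieties.

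The step I expect to be the main obstacle is making part (3) genuinely \emph{canonical} rather than dependent on an auxiliary hyperbolic completion, since the later application (Lemma \ref{Lagsubbs}) will globalize this construction fiberwise over the curve $X$, and any non-canonical choice would obstruct patching into a morphism of bundles. I would therefore spend the most care pinning down the precise correspondence between the extra coordinate and the line $\mathsf E^\perp/\mathsf E$, checking compatibility of the orthogonal forms under the bijection, and verifying that the resulting identification respects the two-component structure in the way asserted — namely that the single irreducible $\OG(n,2n+1)$ maps isomorphically onto \emph{each} component separately, not onto their union.
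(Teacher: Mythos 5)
Your proposal is correct and follows essentially the same route as the paper: parts (1) and (2) are standard facts (the paper simply cites Reid, while you sketch the homogeneous-space argument), part (3) is established via the correspondence $\mathsf{F} \mapsto \mathsf{F}\cap\mathsf{V}$ between Lagrangians of $\cc^{2n+2}$ and of the nondegenerate hyperplane $\cc^{2n+1}$, with the parity criterion of (2) controlling the two-to-one/component structure, and part (4) is the standard identification of the tangent space of the even orthogonal Grassmannian with $\wedge^2\mathsf{F}^*$ transported through the isomorphism of (3). The only blemishes are cosmetic: the initial hesitation over how to embed $\cc^{2n+1}$ in $\cc^{2n+2}$ (the correct choice, which you reach, is $\mathsf{W}=\mathsf{V}\perp\langle v\rangle$ with $v$ non-isotropic), and the slip writing $\mathsf{F}^\perp/\mathsf{F}$ where $\mathsf{W}/\mathsf{F}\cong\mathsf{F}^*$ is meant in the tangent-space computation.
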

\begin{proof}
The proofs of (1) and (2) can be found in Reid \cite[\S 1]{Reid}.

To prove (3), let $\mathsf{W}$ be an orthogonal vector space of dimension $2n+2$ and $\mathsf{V}$ an orthogonal subspace of dimension $2n+1$. Given $\mathsf{F} \in \OG(n+1, \mathsf{W})_i$, write $\mathsf{E} := \mathsf{F} \cap \mathsf{V}$. Then $\mathsf{E}$ is isotropic of dimension $n$, since the dimension of an isotropic subspace cannot exceed $\frac{1}{2}\dim \mathsf{V}$. It is easy to see that the assignment $\mathsf{F} \mapsto \mathsf{E}$ yields a surjection $\OG(n+1, \mathsf{W})_i \to \OG(n, \mathsf{V})$. To show the injectivity, let $\mathsf{F}$ and $\mathsf{F}'$ be two Lagrangian subspaces of $\mathsf{W}$ in the same component, both containing $\mathsf{E}$. By (2), we have
\[ n \ \le \ \dim \left( \mathsf{F} \cap \mathsf{F}' \right) \ \equiv \ n+1 \mod 2, \]
so $\mathsf{F} = \mathsf{F}'$.  

The first isomorphism of (4) is immediate from (3). The second isomorphism of (4) is a well-known fact on even orthogonal Grassmannians.
\end{proof}

Now let $0 \to E \to V \to F^* \to 0$ be an orthogonal extension of rank $2n+1$ as in Proposition \ref{oddrankcrit}. By Proposition \ref{cohomcrit}, the class $j_* [V]$ belongs to $H^1 (X, \wedge^2 F)$. In view of Proposition \ref{evenrankcrit}, we obtain an orthogonal extension $0 \to F \to W \to F^* \to 0$ with $[W] = j_* [V]$. Note that the $F$ in this sequence is a new copy of $F$ distinct from $E^\perp \subset V$. Henceforth, we denote it $F'$ for clarity. We obtain an exact diagram:
\begin{equation} \label{WV} \begin{CD} 
 @. 0 @. 0 @.  @. \\
 @. @VVV @VVV @. @. \\
 0 @>>> E @>>> V @>>> F^* @>>> 0 \\
 @. @VVV @VVV @| @. \\
 0 @>>> F' @>>> W @>>> F^* @>>> 0 \\
 @. @VVV  @VVV  @. @. \\
 @. \ox @= \ox @. @. @. \\
 @. @VVV @VVV @. @. \\
 @. 0 @. 0 @. @. \end{CD} \end{equation}
\begin{prop} \begin{enumerate}
\renewcommand{\labelenumi}{(\arabic{enumi})}
\item The bundle $W$ is an orthogonal direct sum $V \perp \ox$. The subbundle $F'$ is Lagrangian in $W$ and satisfies $F' \cap V = E$.
\item For every Lagrangian subbundle $G$ of $W$, the intersection $G \cap V$ is a Lagrangian subbundle of $V$ with $\det (G \cap V) = \det G$.
\item The association $G \mapsto G \cap V$ defines a surjective $2:1$ map
\[ \{ \text{Lagrangian subbundles of $W$} \} \leftrightarrow \{ \text{Lagrangian subbundles of $V$} \}. \]
\end{enumerate}
\label{Lagsubbs} \end{prop}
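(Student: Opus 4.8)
The plan is to deduce all three statements from the pointwise correspondence of Lemma \ref{OG}, carrying it out in families; the genuine work is not the fibrewise linear algebra but checking that the fibrewise constructions globalize to honest subbundles. The common tool is an \emph{isotropic shift}: if $E'$ is a Lagrangian subbundle of $V$, then the form induced on $(E')^{\perp_V}/E' \cong \ox$ is nondegenerate (its radical is $E'$), and for a generator $s$ of the $\ox$-summand of $V \perp \ox$ with $\omega(s,s)=1$ one can cancel the self-pairing of a local lift of $(E')^{\perp_V}/E'$ against $s$.

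For part (1), I would first note $\det V = \ox$ (from $V/E \cong F^*$ together with $\det E^{\perp} = \det E$) and $\det W = \det F'\otimes (\det F')^{*} = \ox$. The subbundle $F = E^{\perp_V}$, which fits in $0 \to E \to F \to \ox \to 0$, lifts by the isotropic shift to a Lagrangian subbundle $F'_0$ of $V \perp \ox$ satisfying $F'_0 \cap V = E$ and $F'_0 \cong F$. This exhibits $V \perp \ox$ as an even orthogonal extension of $F^{*}$ by $F$; computing its class (this is exactly the content of diagram (\ref{WV})) shows it equals $j_*[V] = [W]$, so Proposition \ref{evenrankcrit} yields an orthogonal isomorphism $W \cong V \perp \ox$ carrying $F'$ to $F'_0$. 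In particular $F'$ is Lagrangian and $F' \cap V = E$.

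For part (2), let $G \subset W$ be Lagrangian of rank $n+1$. Since part (1) identifies $W_x = V_x \perp \cc$, Lemma \ref{OG}(3) applies fibrewise: $G_x \cap V_x$ is isotropic of dimension exactly $n$ for every $x$. Hence the composite $G \hookrightarrow W \to \ox$ is surjective on each fibre, so surjective, and its kernel $G \cap V$ is a subbundle of rank $n$, isotropic because each $G_x \cap V_x$ is, and therefore Lagrangian in $V$. The sequence $0 \to G\cap V \to G \to \ox \to 0$ then gives $\det(G \cap V) = \det G$.

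For part (3), surjectivity comes from the isotropic shift: given a Lagrangian subbundle $E'$ of $V$, choose local lifts $v'$ of a generator of $(E')^{\perp_V}/E' \cong \ox$, so that $\omega(v',v') = c' \in H^0(X,\ox) = \cc\setminus\{0\}$; fixing $\lambda$ with $\lambda^2 = -c'$, the sheaf $G := E' + \ox\cdot(v'+\lambda s)$ is independent of the choice of $v'$ (altering $v'$ by a section of $E'$ leaves it unchanged), isotropic of constant fibre dimension $n+1$, hence a Lagrangian subbundle with $G \cap V = E'$. For the two-to-one count, any $G$ with $G\cap V = E'$ is spanned over $E'$ by a single $w = v'' + \mu s$ with $\mu \neq 0$; isotropy together with $E' \perp w$ forces $v'' \in (E')^{\perp_V}$ (nonzero in the quotient) and, after rescaling, $\mu^2 = -c'$, giving exactly the two lifts $\mu = \pm\lambda$, which lie fibrewise in the two distinct components of Lemma \ref{OG}(2). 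The main obstacle I anticipate is precisely this globalization: that $G \cap V$ is locally free of constant rank $n$ (resolved by the constant fibre dimension from Lemma \ref{OG}(3)) and that the two fibrewise lifts assemble into genuine global Lagrangian subbundles. The latter is controlled because every ambiguous datum—the splitting of $(E')^{\perp_V}/E' \cong \ox$ and the sign of $\sqrt{-c'}$—lives in $H^0(X,\ox) = \cc$ and is thus a global constant, so there is no monodromy obstruction separating the two sheets.
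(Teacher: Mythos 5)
Your parts (2) and (3) are sound. Part (2) coincides with the paper's argument (fibrewise Lemma \ref{OG}(3) plus the exact sequence $0 \to G\cap V \to G \to \ox \to 0$). For part (3) you replace the paper's argument (complete $E'|_U$ in exactly two ways over a trivializing open set, then extend uniquely over the curve) by an explicit global construction $G_{\pm} = E' + \ox\cdot(v' \pm \lambda s)$ with $\lambda^2 = -c'$; this is essentially the content of the remark following the proposition in the paper (the two lifts are swapped by $(v,\mu)\mapsto(v,-\mu)$), and it has the merit of making the absence of monodromy between the two sheets completely explicit, since $c'$ and hence $\pm\lambda$ are global constants. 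That part I would accept as a legitimately more constructive route to the same count.

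Part (1), however, has a genuine gap. You build a Lagrangian $F'_0 \subset V\perp\ox$ with the right properties, observe that $V\perp\ox$ is then an orthogonal extension of $F^*$ by $F$ with class $j_*[V]$, and conclude from Proposition \ref{evenrankcrit} that $W \cong V\perp\ox$ as orthogonal bundles. But Proposition \ref{evenrankcrit} is only an existence criterion: it tells you which classes in $H^1(X,\otimes^2 F)$ carry \emph{some} compatible orthogonal structure. Equality of classes in $H^1(X,\wedge^2 F)$ gives an isomorphism of $W$ with $V\perp\ox$ as vector bundle extensions, but it does not by itself guarantee that this isomorphism intertwines the two symmetric forms; for that one needs uniqueness (up to equivalence) of the orthogonal structure extending the given Lagrangian, which in Hitching's criterion rests on $F$ being \emph{simple} — a hypothesis you have not verified and which can fail for an extension of $\ox$ by $E$ (e.g.\ when $h^0(X,E^*)>0$ or the extension splits). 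The paper avoids this entirely by working inside $W$: it shows directly that $V^{\perp}\subset W$ is a line subbundle isomorphic to $\ox$, that $V\cap V^{\perp}=0$ because the form on $V$ is nondegenerate, hence that $V^{\perp}$ is non-isotropic and splits the projection $W\to\ox$ orthogonally. You should replace your class-comparison step by this direct computation of $V^{\perp}$ (or else prove the required uniqueness of the orthogonal structure in your setting).
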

\begin{proof}
(1) The only statement not clear from the diagram and the discussion before it is that $W = V \perp \ox$. It is easy to see that the orthogonal complement $V^\perp \subset W$ is isomorphic to $\ox$. 
Since the form on $V$ is nondegenerate, $V \cap V^\perp$ is everywhere zero. Hence $V^\perp$ is not isotropic, and is therefore isomorphic to $\ox$ as an orthogonal bundle. From the fact that $V \cap V^\perp = 0$ it also follows that $V^{\perp} \hookrightarrow W$ is a splitting of $W \to \ox$.

Statement (2) follows from the linear algebraic fact in Lemma \ref{OG} (3) and the diagram
\[ \begin{CD} 0 @>>> V @>>> W @>>> \ox @>>> 0 \\
 @. @AAA  @AAA  @| @. \\
 0 @>>> G \cap V @>>> G @>>> \ox @>>> 0. \end{CD}  \]

As for (3): Suppose $E$ is a Lagrangian subbundle of $V$. Consider a trivalization of $W$ over an open subset $U \subseteq X$. By Lemma \ref{OG} (3), the bundle $E|_U$ can be completed to a Lagrangian subbundle of $W|_U$ in exactly two ways. Since $\dim X = 1$, each such completion admits a unique extension to the whole of $X$. Therefore, the map $G \mapsto G \cap V$ is two-to-one.
\end{proof}

\begin{remark} Given $E \xrightarrow{j} E^\perp = F \subset V$, the Lagrangian subbundles $G \subset W$ satisfying $G \cap V = E$ may be realized as follows: One such $G$ comes from the construction in Proposition \ref{Lagsubbs} (1). The other is the image of the first via the orthogonal automorphism of $W = V \perp \ox$ given by $(v, \lambda) \mapsto (v, -\lambda)$. \end{remark}


We can now deduce a topological characterization of the two components of $MO_X (2n + 1)$, from the analogous one stated in Proposition \ref{parityeven} for bundles of even rank:

\begin{theorem} \label{thm1} \begin{enumerate}
\renewcommand{\labelenumi}{(\arabic{enumi})}
\item Let $E_1$ and $E_1$ be Lagrangian subbundles of an orthogonal bundle $V \in MO_X (2n+1)$. Then $\deg E_1$ and $\deg E_2$ have the same parity.
\item The bundle $V$ belongs to $MO_X(2n+1)^+$ (resp., $MO_X(2n+1)^-$) if and only if its Lagrangian subbundles have even degree (resp., odd degree).
\end{enumerate}
\end{theorem}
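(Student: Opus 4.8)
The plan is to reduce both statements to the even-rank results of Proposition \ref{parityeven} by passing to the orthogonal bundle $W = V \perp \ox$ of rank $2n+2$ and exploiting the degree-preserving correspondence of Proposition \ref{Lagsubbs}. Throughout I use that, by Proposition \ref{Lagsubbs} (2)--(3), every Lagrangian subbundle $E$ of $V$ is of the form $G \cap V$ for a Lagrangian subbundle $G$ of $W$ with $\deg G = \deg E$, and conversely each such $G$ restricts to a Lagrangian subbundle of $V$ of the same degree. In particular the set of degrees of Lagrangian subbundles of $V$ coincides with that of $W$.

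For part (1), given Lagrangian subbundles $E_1, E_2$ of $V$, I would choose Lagrangian subbundles $G_1, G_2$ of $W$ with $G_i \cap V = E_i$, so that $\deg G_i = \deg E_i$. Since $W$ has even rank $2n+2$, Proposition \ref{parityeven} (1) applies to the pair $G_1, G_2$ and gives $\deg G_1 \equiv \deg G_2 \bmod 2$. Hence $\deg E_1 \equiv \deg E_2 \bmod 2$, as claimed.

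For part (2), the key point is to match the connected components, i.e.\ to show $V \in MO_X(2n+1)^+$ if and only if $W \in MO_X(2n+2)^+$. This follows from the Serre formula for $w_2$ quoted in the introduction: since $h^0(X, W \otimes \kappa) = h^0(X, V \otimes \kappa) + h^0(X, \kappa)$, one computes $w_2(W) \equiv h^0(X, W \otimes \kappa) + h^0(X, \kappa) \equiv h^0(X, V \otimes \kappa) + h^0(X, \kappa) \equiv w_2(V) \bmod 2$, so $V$ and $W$ lie in components of the same sign. Applying Proposition \ref{parityeven} (3) to $W$ then shows that $W$ lies in $MO_X(2n+2)^+$ (resp.\ $MO_X(2n+2)^-$) exactly when the Lagrangian subbundles of $W$, equivalently those of $V$, have even (resp.\ odd) degree; combined with the component identification this is precisely the assertion for $V$.

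The only real content beyond citing the even-rank results is the component matching in part (2), and this is where I expect the main (if modest) obstacle to lie: one must verify that forming $W = V \perp \ox$ preserves the topological type, either through the additivity $w_2(V \perp \ox) = w_2(V)$ of the second Stiefel--Whitney class under orthogonal direct sum with a trivial bundle, or directly through the Serre formula as above. Everything else---the equality of degrees and the transfer of parity---is immediate from the correspondence in Proposition \ref{Lagsubbs} together with the even-rank parity statement.
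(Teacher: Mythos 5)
Your proposal is correct and follows essentially the same route as the paper: the paper's own proof of Theorem~\ref{thm1} consists precisely of citing Proposition~\ref{parityeven} together with Proposition~\ref{Lagsubbs}~(2) and (3), i.e.\ transferring the parity statement from the even-rank bundle $W = V \perp \ox$ via the degree-preserving correspondence of Lagrangian subbundles, exactly as you do. One small caution on your Serre-formula computation in part (2): the congruence $w_2(V) \equiv h^0(X, V\otimes\kappa) + h^0(X,\kappa)$ quoted in the paper is the odd-rank specialization of $w_2 \equiv h^0(V\otimes\kappa) + (\mathrm{rk}\,V)\,h^0(\kappa) \bmod 2$, so applying it verbatim to the even-rank bundle $W$ and then writing $h^0(W\otimes\kappa) + h^0(\kappa) \equiv h^0(V\otimes\kappa) + h^0(\kappa)$ involves two parity slips (an extra $h^0(\kappa)$ from the wrong-rank formula and a dropped $h^0(\kappa)$ in the simplification) that happen to cancel; your alternative justification via the additivity $w_2(V \perp \ox) = w_2(V)$ is the clean way to match the components and should be the one retained.
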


\begin{proof}
This follows from Proposition \ref{parityeven} and Proposition \ref{Lagsubbs} (2) and (3).
\end{proof}

\noindent We also obtain an upper bound on $t(V)$:

\begin{prop}  \label{oddbound}
For any orthogonal bundle $V$ of rank $2n+1$, we have $t(V) \le (n+1)(g-1)+3$. 
\end{prop}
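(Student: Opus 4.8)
The plan is to deduce this bound directly from the even-rank bound in Proposition~\ref{evenbound}, using the correspondence between Lagrangian subbundles of $V$ and of the auxiliary bundle $W = V \perp \ox$ established in Proposition~\ref{Lagsubbs}. The key observation is that $W$ is an orthogonal bundle of rank $2n+2$, so Proposition~\ref{evenbound} (with $n$ in that statement replaced by $n$, since $2n+2 = 2(n+1)$ forces the parameter to be $n+1-1 = n$) gives $t(W) \le (n+1)(g-1) + 3$. The whole argument then reduces to comparing $t(V)$ and $t(W)$.

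First I would recall that, by definition, $t(V) = -2\max\{\deg E : E \text{ Lagrangian in } V\}$ and likewise for $W$. By Proposition~\ref{Lagsubbs}~(2), for every Lagrangian subbundle $G$ of $W$ the intersection $G \cap V$ is a Lagrangian subbundle of $V$ with $\deg(G \cap V) = \deg G$. Conversely, by Proposition~\ref{Lagsubbs}~(1),~(3), every Lagrangian subbundle $E$ of $V$ arises as $G \cap V$ for some Lagrangian $G \subset W$, and these two constructions preserve degree. Hence the sets of degrees of Lagrangian subbundles of $V$ and of $W$ coincide, which immediately yields $\max\{\deg E\} = \max\{\deg G\}$ and therefore $t(V) = t(W)$.

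With that identity in hand, the bound follows at once: $t(V) = t(W) \le (n+1)(g-1) + 3$, where the inequality is Proposition~\ref{evenbound} applied to the rank-$(2n+2)$ bundle $W$.

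The only step requiring genuine care is verifying that the degree-preserving correspondence of Proposition~\ref{Lagsubbs} really does identify the two sets of attainable degrees, rather than merely relating individual subbundles — in particular that \emph{every} Lagrangian subbundle of $V$ (not just the distinguished $E$ from the extension) lifts to some $G \subset W$ of equal degree. This is precisely the content of parts~(1)--(3) of Proposition~\ref{Lagsubbs}, so no new difficulty arises; the surjectivity in part~(3) is what guarantees the maximum over $V$ is achieved by some $G$, and the degree equality $\det(G\cap V) = \det G$ guarantees the maxima agree. Thus I expect the proof to be short, with essentially no obstacle once the correspondence is invoked.
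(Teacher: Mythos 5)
Your proposal is correct and follows essentially the same route as the paper: form $W = V \perp \mathcal{O}_X$, apply the even-rank bound of Proposition \ref{evenbound}, and transfer a maximal-degree Lagrangian subbundle down to $V$ via the degree-preserving correspondence of Proposition \ref{Lagsubbs}. The paper only uses the direction from $W$ to $V$ (which suffices), whereas you establish the full equality $t(V) = t(W)$, but this is a minor strengthening rather than a different argument.
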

\begin{proof}
Let $W = V \perp \ox$. By Proposition \ref{evenbound}, the bundle $W$ has a Lagrangian subbundle $F$ with $-2\deg F \le (n+1)(g-1)+3$. By Proposition \ref{Lagsubbs}, the bundle $E = F \cap V$ is a Lagrangian subbundle of $V$ with $\deg E = \deg F$. Hence we obtain the desired bound on $t(V)$.
\end{proof}

\noindent In \S \ref{Segrestrat}, we will show that this upper bound is sharp.

\section{Parameter spaces of orthogonal extensions}

\subsection{Construction of the parameter space} \ Firstly, we construct parameter spaces for certain orthogonal extensions of the form
\begin{equation} 0 \to E \to V \to F^* \to 0 \label{stdext} \end{equation}
where $E$ is a bundle of rank $n$ and degree $-e < 0$ and $F$ is an extension $0 \to E \xrightarrow{j} F \to \ox \to 0$. The parameter space will be obtained by deforming the space $\pp \Pi_j$ obtained in the previous section in a suitable way.

Let $U_X^s(n, -e)$ be the moduli space of stable bundles of rank $n$ and degree $-e$. This is a quasiprojective irreducible variety of dimension $n^2(g-1) + 1$.

\begin{prop} \label{taut} For each positive integer $e$, we have:
\begin{enumerate}
\renewcommand{\labelenumi}{(\arabic{enumi})} 
\item There exists a finite \'etale cover $\xi_e \colon \tsu \to U_X^s(n, -e)$ and a double fibration
\[
\A_e \ \stackrel{\pi_e}{\longrightarrow} \ J_e \ \stackrel{\tau_e}{\longrightarrow} \ \tsu
\]
such that the fiber $\tau^{-1} (\bar{E})$ with $\xi_e(\bar{E}) = E$ is isomorphic to the (projectivized) extension space $\pp H^1(X, E)$, and the fiber of $\pi_e$ at $j \in \pp H^1(X, E)$ is isomorphic to $\pp \Pi_j$.

\item There is a bundle $\V_e$ over $\A_e \times X$ such that for each $j \in \pp H^1(X, E)$ and $[V] \in \pp \Pi_j$, the restriction of $\V_e$ to $\{ [V] \} \times X$ is isomorphic to the orthogonal bundle $V$.
\item The variety $\A_e$ has dimension $\frac{1}{2} n(3n+1) (g-1) + ne$. 
\end{enumerate} 
\end{prop}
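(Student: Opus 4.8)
The plan is to realize $\A_e$ as a tower of projective bundles over an étale cover of $U^s_X(n,-e)$ carrying a Poincaré bundle, and to deduce (3) by reading off the ranks of the bundles in the tower via Riemann--Roch. First I fix an étale cover $\xi_e \colon \tsu \to U^s_X(n,-e)$ over which there is a universal bundle $\mathcal{E}$ on $\tsu \times X$; such a cover exists by the standard rigidification of the moduli functor of stable bundles. Writing $\pi \colon \tsu \times X \to \tsu$, stability together with negativity of the degree gives $h^0(X,E)=0$, so by cohomology and base change $\mathcal{H}^1 := R^1\pi_*\mathcal{E}$ is locally free of rank $e+n(g-1)$; I set $J_e := \pp(\mathcal{H}^1)$ with projection $\tau_e$, whose fibre over $\bar E$ is $\pp H^1(X,E)$ as required. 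Over $J_e \times X$ the tautological inclusion $\mathcal{O}(-1)\hookrightarrow \tau_e^*\mathcal{H}^1$ classifies a universal extension $\mathcal{F}$ of $\ox$ by the pullback of $\mathcal{E}$, restricting to the bundle $F$ of (\ref{extF}) over each point $j$.

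Next I build the bundle whose fibres are the spaces $\Pi_j$ of (\ref{twofour}). Applying $R^1 q_*$, with $q \colon J_e \times X \to J_e$, to the relative sequence $0 \to \wedge^2\mathcal{E} \to \wedge^2\mathcal{F} \to \mathcal{E} \to 0$ and using $h^0(X,E)=0$ fibrewise yields a family version of $0 \to H^1(\wedge^2 E) \to H^1(\wedge^2 F) \xrightarrow{\rho} H^1(E) \to 0$; in particular the relative $\rho \colon R^1 q_*(\wedge^2\mathcal{F}) \to \mathcal{H}^1$ is surjective of constant rank. I then set $\Pi := \rho^{-1}(\mathcal{O}_{J_e}(-1))$, the preimage of the tautological line subbundle, a vector bundle of rank $h^1(X,\wedge^2 E)+1$ with fibre $\Pi_j$, and define $\A_e := \pp(\Pi)$ with projection $\pi_e$. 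A point to verify is that $\wedge^2\mathcal{F}$ has no fibrewise sections: since $\wedge^2 E$ is semistable of slope $2\mu(E)<0$ (exterior powers of semistable bundles are semistable in characteristic zero), one has $h^0(X,\wedge^2 E)=0$, and the sequence above then forces $h^0(X,\wedge^2 F)=0$, so no further shrinking of the cover is needed.

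For (2), the tautological class in the pullback of $\Pi$ to $\A_e$ defines, by relative Ext, a universal extension $\V_e$ of $\mathcal{F}^*$ by $\mathcal{E}$ on $\A_e \times X$ whose restriction to $\{[V]\}\times X$ is the extension $V$ of (\ref{stdext}). To equip $\V_e$ with an orthogonal structure I would run the cohomological argument of Proposition \ref{cohomcrit} in families: the defining property of $\Pi$ places $j_*[\V_e]$ in $H^1(\wedge^2\mathcal{F})$, which produces a relative map extending $j$ and ${}^t j$, and the canonical symmetrization $\tfrac12(\omega + {}^t\omega)$ globalizes to a section of $\Sym^2\V_e^*$; by Proposition \ref{oddrankcrit}(1) this form is nondegenerate precisely away from the sub-bundle cut out by $j_*(H^1(\wedge^2 E))$. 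I expect this relative orthogonal structure to be the \emph{main obstacle}: the pointwise symmetrization is canonical, but one must check its compatibility with the twist by which $\mathcal{E}$ is determined only up to a line bundle pulled back from $\tsu$, and confirm that the outcome is a genuine section of $\Sym^2\V_e^*$ rather than a merely fibrewise collection of forms.

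Finally, (3) is a direct count. Since $\xi_e$ is étale, $\dim\tsu = n^2(g-1)+1$; the first fibre contributes $\rank\mathcal{H}^1 - 1 = e+n(g-1)-1$; and by Riemann--Roch with $h^0(X,\wedge^2 E)=0$ one gets $\rank\Pi - 1 = h^1(X,\wedge^2 E) = (n-1)e + \tfrac{n(n-1)}{2}(g-1)$. Summing the three contributions gives $\dim\A_e = \tfrac12 n(3n+1)(g-1) + ne$, as claimed.
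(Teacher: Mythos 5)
Your construction is correct and follows essentially the same route as the paper: a Narasimhan--Ramanan \'etale cover carrying a Poincar\'e bundle, the projectivized $R^1$ giving $J_e$ with its universal extension $\mathcal{F}$, the globalization of $\Pi_j$ inside $R^1 q_*(\wedge^2\mathcal{F})$ (the paper presents it as a kernel sheaf via Lemma \ref{h1inj}, you as $\rho^{-1}$ of the tautological subbundle --- these agree by the commutativity of (\ref{twotwo})), and the identical dimension count. The only divergence is that for (2) the paper simply cites Lange's theorem on universal families of extensions and does not assert a relative orthogonal structure on $\V_e$, so the ``main obstacle'' you flag is not actually required by the statement.
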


\begin{proof} (1) We follow the construction in \cite[\S 3.1]{CH3}. By Narasimhan--Ramanan \cite[Proposition 2.4]{NR}, there exists a finite \'etale cover $\xi_e \colon \tsu \to U_X^s(n, -e)$ together with a bundle $\mathcal E \to \tsu \times X$, such that the restriction of $\mathcal E$ to $\{ \bar{E} \} \times X$ is isomorphic to the bundle $E = \xi_e(\bar{E})$. By a standard process with universal extensions, we find a fibration $\tau_e \colon J_e \to \tsu$ with fiber $\tau_e^{-1} (\bar{E}) = \pp H^1(X, E)$, where $E = \xi_e (\bar{E})$. Furthermore, write $p_J \colon J_e \times X \to J_e$ and $p_X \colon J_e \times X \to X$ for the projections. Over $J_e \times X$ there is an exact sequence of bundles
\[ 0 \to \mathcal E \to \mathcal F \to p_X^* \ox \to 0, \]
whose restriction to $\{ {j} \} \times X$ is the extension $0 \to E \stackrel{j}{\to} F \to \ox \to 0$.

Globalizing (\ref{twotwo}), we consider the sheaf
\[ \Ker \left[ R^1 {\left( p_{J} \right)_*}  (\wedge^2 \mathcal F ) \ \longrightarrow \ R^1{\left( p_{J} \right)_*}  (\otimes^2 \mathcal F ) \ \longrightarrow \ R^1 {\left( p_{J} \right)_*} (\mathcal F \otimes p_X^* \ox) \right]. \]
By Lemma \ref{h1inj}, the restriction of this kernel to each $j \in J_e$ is of constant dimension. Hence we obtain a vector bundle over $J_e$ whose fiber at $j$ is $\Pi_j$. We denote the corresponding projective bundle over $J_e$ by $\A_e$.\\
\\
(2) The existence of the universal extension $\V_e$ over $\tse \times X$ follows from Lange \cite[Corollary 4.5]{Lange}.\\
\\
(3) By Lemma \ref{h1inj}, we have $\dim \pp \Pi_j = h^1 (X, \wedge^2 E)$. Since $E$ is stable of negative degree, $h^1(X, E) \ = \ e + n(g-1)$ and $h^1 (X, \wedge^2 E) \ = \ (n-1)e + \frac{1}{2}n(n-1)(g-1)$.
Thus $\dim \A_e$ is given by
\[ \dim \tsu + \dim \pp H^1(X, E) + \dim \pp \Pi_j \ = \ \frac{1}{2} n(3n+1) (g-1) + ne. \]
\end{proof}

\subsection{Rank 3 case} \label{rkthree} \ Consider an orthogonal bundle $V$ of rank $3$. In this case, $E$ is a line bundle and $\wedge^2 E = 0$. Therefore, $\Pi_j \cong \cc$, corresponding to the extension class $j \in H^1(X, E)$. In other words, for each rank two extension $0 \to E \stackrel{j}{\to} F \to \ox \to 0$, there is, up to isomorphism, a unique orthogonal bundle $V$ containing a Lagrangian subbundle $E$ with $F = E^\perp$. The parameter space $\A_e$ coincides with $J_e$, which admits a fibration $\tau_e \colon J_e \to \Pic^{-e}(X)$ with fiber $\tau_e^{-1} (E) = \pp H^1(X, E)$. 
On the other hand, $\A_e$ is birational to $S_e \subset U_X(2, -e)$ by Lemma \ref{Fstable}. Note that $\dim \A_e = e + 2g-2$. From Mumford \cite[p.\ 185]{M} we recall the following statement:
\begin{lem} Every orthogonal bundle $V$ of rank 3 is of the form $L \otimes S^2 F$, where $L$ is a line bundle and $F$ is a rank 2 bundle with $\det F \cong L^*$. 
\end{lem}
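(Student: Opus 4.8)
The plan is to exploit the exceptional isomorphism $\mathrm{SO}_3 \cong \mathrm{PGL}_2$, under which the standard three-dimensional orthogonal representation of $\mathrm{SO}_3$ is identified with the adjoint representation of $\mathrm{PGL}_2$, that is, with $S^2$ of the standard $\mathrm{GL}_2$-representation twisted by $(\det)^{-1}$. An orthogonal bundle $V$ of rank $3$ has $\det V \cong \ox$ (the form gives $V \cong V^*$, and in our setting these are $\mathrm{SO}$-bundles), so it determines a principal $\mathrm{SO}_3$-bundle. As a warm-up, and to pin down the target, I would first verify the easy direction: for any rank $2$ bundle $F$, the $\mathrm{SL}_2$-invariant symmetric form on $S^2 F$ is $\mathrm{GL}_2$-equivariant valued in $(\det F)^2$, so after twisting by a line bundle $L$ with $L^2 \cong (\det F)^{-2}$ one obtains a nondegenerate symmetric pairing
\[ (L \otimes S^2 F) \otimes (L \otimes S^2 F) \ \longrightarrow \ L^2 \otimes (\det F)^2 \ \cong \ \ox. \]
Normalizing $L = (\det F)^{-1}$ gives $\det F \cong L^*$ and, since $\det(S^2 F) \cong (\det F)^3$, trivial determinant; this exhibits $L \otimes S^2 F$ as a rank $3$ orthogonal bundle and identifies its associated $\mathrm{PGL}_2$-bundle with $\pp(F)$.

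For the converse, which is the actual content, I would pass from $V$ to the associated $\mathrm{PGL}_2$-bundle $P$ under the exceptional isomorphism — equivalently a $\pp^1$-bundle over $X$ — and lift it to a $\mathrm{GL}_2$-bundle. The obstruction to lifting a $\mathrm{PGL}_2$-cocycle $\{ g_{ij} \}$ to a $\mathrm{GL}_2$-cocycle is the class in $H^2(X, \ox^*)$ measuring the failure of the cocycle condition for any chosen set-theoretic lifts $\{ \tilde g_{ij} \}$. Over a smooth curve this group vanishes — by Tsen's theorem $\mathrm{Br}(X) = 0$, or directly from the exponential sequence, since $H^2(X, \ox) = 0$ and $H^3(X, \mathbb{Z}) = 0$. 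Hence the lifts may be corrected to a genuine $\mathrm{GL}_2$-cocycle, producing a rank $2$ bundle $F$ with $\pp(F) \cong P$.

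Finally I would match the two orthogonal structures: both $V$ and the bundle $L \otimes S^2 F$ built from this $F$ (with $L = (\det F)^{-1}$) induce the same $\mathrm{PGL}_2$-bundle $P$, hence are isomorphic as $\mathrm{SO}_3$-bundles and therefore as orthogonal bundles, giving $V \cong L \otimes S^2 F$ with $\det F \cong L^*$. I expect the main obstacle to be the lifting step, which is precisely where the one-dimensionality of $X$ enters through the vanishing of the Brauer group; the accompanying point requiring care is that the exceptional isomorphism genuinely carries the standard representation to the twisted symmetric square, so that the lift $F$ reproduces $V$ rather than some unrelated orthogonal bundle. The latter, however, is a fixed linear-algebra identity independent of $X$ and so presents no real difficulty.
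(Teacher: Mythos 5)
Your proof is correct, but it takes a genuinely different route from the paper's. The paper argues inside its own extension-theoretic framework: it takes an isotropic line subbundle $E \subset V$ (whose existence is part of the theory already developed), sets $F = E^\perp$, tensors the exact sequence $0 \to E \otimes F \to S^2 F \to \ox \to 0$ by $E^*$ to exhibit $E^* \otimes S^2 F$ as an extension of $E^*$ by $F$ containing $E$ as a Lagrangian subbundle, and then invokes the uniqueness observed in \S 4.2 (for rank $3$ one has $\wedge^2 E = 0$, so $\Pi_j \cong \cc$ and there is a unique orthogonal bundle with $E$ Lagrangian and $E^\perp = F$) to conclude $V \cong E^* \otimes S^2 F$. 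You instead use the exceptional isomorphism $\mathrm{SO}_3 \cong \mathrm{PGL}_2$, lift the associated $\pp^1$-bundle to a rank $2$ bundle $F$ via the vanishing of $H^2(X, \mathcal{O}_X^*)$ (Tsen, or the exponential sequence), and identify the standard orthogonal representation with $S^2(\mathrm{std}) \otimes \det^{-1}$, so that the underlying bundle is $S^2 F \otimes (\det F)^{-1}$; since $H^1(X, \mathrm{PGL}_2) \to H^1(X, \mathrm{SO}_3)$ is a bijection, agreement of the $\mathrm{PGL}_2$-bundles forces an isometry, as you say. This is essentially Mumford's original argument and is self-contained, at the cost of working in the analytic or \'etale topology for the lifting step; the paper's version avoids the Brauer-group input but relies on the prior existence of an isotropic line subbundle and on the uniqueness of the orthogonal extension. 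Both arguments (and the statement itself, since $L \otimes S^2 F$ with $\det F \cong L^*$ always has trivial determinant) implicitly assume $\det V \cong \ox$, which you correctly flag.
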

\begin{proof}
The bundle $V := L \otimes S^2F $ is orthogonal, due to the symmetric isomorphism 
\[
V^* \cong L^* \otimes S^2(F^*) \cong L^* \otimes S^2(F \otimes L) \cong L \otimes S^2 F  = V.
\]
Let $V$ be an orthogonal bundle of rank 3 with a Lagrangian subbundle $E$ and write $F := E^\perp$. Tensoring the exact sequence
\[
0 \to E \otimes F \to S^2 F \to \ox \to 0,
\]
by $E^*$, we obtain $0 \to F \to E^* \otimes S^2 F \to E^* \to 0$. By the uniqueness discussed before the statement of the lemma, $V$ is isomorphic to $E^* \otimes S^2 F$. 
\end{proof}

\subsection{Stability of general bundles} \label{stability} \ The space $\A_e$ parameterizes those orthogonal bundles  of rank $2n+1$ which admit Lagrangian subbundles of degree $-e $. By the universal property, there is a rational map $\alpha_e \colon \A_e \dashrightarrow MO_X(2n+1)$. Our next step will be to show that a general point of $\A_e$ corresponds to a stable orthogonal bundle. This will imply that the maps $\alpha_e$ are defined on dense subsets.

\begin{prop} \label{generalstable}
For $0 < e < \frac{1}{2}(n+1) (g-1)$, a general bundle represented in $\A_e$ is a stable orthogonal bundle. 
\end{prop}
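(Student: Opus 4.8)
The plan is to prove stability by the classical method of ruling out destabilizing subbundles, using the orthogonal structure to reduce to isotropic ones and then balancing a degree inequality against a dimension count. By the fibration structure of Proposition \ref{taut}, a general point of $\A_e$ corresponds to a general stable $E$, a general extension class $j$, and a general $[V] \in \Pi_j$; by Lemma \ref{Fstable} the associated $F$ is then stable and lies in $S_e$, so I may assume throughout that $E$ and $F$ satisfy the generic bounds of \eqref{degI} on the degrees of their subbundles. Recall that $V$ is a stable orthogonal bundle precisely when every nonzero isotropic subbundle $G \subsetneq V$ has $\deg G < 0$. Thus it suffices to show that a general $V \in \A_e$ admits no isotropic subbundle $G$ of rank $r$ with $1 \le r \le n$ and $\deg G \ge 0$.

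I would analyze such a hypothetical $G$ through the defining sequence $0 \to E \to V \to F^* \to 0$. Writing $A := (G \cap E)^{\mathrm{sat}}$ of rank $a$ and $B$ for the saturated image of $G$ in $F^*$ of rank $b$, we have $a + b = r$ and $\deg G \le \deg A + \deg B$. The isotropy of $G$, together with the fact that $E$ is Lagrangian, forces the image of $G$ in $V/E^\perp \cong E^*$ to land in the annihilator of $A$; since the kernel of $F^* \to E^*$ is $\ox$, this yields the rank relation $b - 1 \le n - a$ that drives the case division. The stability of $E$ shows $a < n$ (otherwise $G = E$, contradicting $\deg G \ge 0$), and the generic bound \eqref{degI} applied to $A \subseteq E$ gives $\deg A \le -\frac{1}{n}\left( ae + a(n-a)(g-1) \right)$. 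Since $\deg G \ge 0$ forces $\deg B \ge -\deg A$, one obtains a lower bound on the degree of the subsheaf $B$ of the positive-degree bundle $F^*$. The hypothesis $e < \frac{1}{2}(n+1)(g-1)$ enters here, exactly as in the proof of Lemma \ref{Fstable}, to make these numerics incompatible throughout the higher-rank range.

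The boundary cases not settled by pure degree bookkeeping—typically those where $a$ or $b$ is small, so that $G$ surjects onto a line subbundle or a low-rank subsheaf—I would dispatch using the genericity of $[V] \in \Pi_j$. The existence of such a $G$ forces a lifting (of some $\ox(-D) \hookrightarrow \ox$, or of a rank-two subsheaf) to the relevant extension, which by Proposition \ref{linelift}, respectively by Proposition \ref{quote} applied to $W = V \perp \ox$ and its Lagrangian subbundle with class $j_*[V]$, confines $[V]$ to a secant-type locus $\Sec^d \pp E$ or $\Sec^k \Gr(2, F/H)$. A dimension count shows each such locus has dimension strictly less than $\dim \pp \Pi_j = h^1(X, \wedge^2 E)$, so a general class in $\Pi_j$ avoids all of them. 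Taking the union over the finitely many discrete invariants $(r,a,b,\deg G)$ and over the base then excludes a destabilizing $G$ for general $V \in \A_e$.

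The main obstacle is the bookkeeping in the last two steps. Because $F$ is not a general stable bundle but is constrained to the Brill--Noether locus $S_e$, one cannot freely invoke genericity for subsheaves of $F^*$, so the isotropy relation between $A$ and $B$ and the genericity of $[V]$ must compensate. Organizing the case division so that every pair $(a,b)$ is excluded by exactly one mechanism—the degree inequality or the secant-variety codimension—while keeping the dimension estimate uniform across all discrete types, is where the real work lies.
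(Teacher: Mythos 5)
Your strategy --- ruling out isotropic subbundles $G$ of nonnegative degree by combining the Hirschowitz bound on $\deg(G\cap E)$ with stability of $F^*$, and deferring the residue to secant loci --- is genuinely different from the paper's, but it has a real gap: the ``boundary cases'' you defer are in fact the bulk of the problem, and the tools you cite do not cover them. Already the simplest case $a=0$, $b=1$ (an isotropic line subbundle $G$ meeting $E$ generically in zero) is untouched by degree bookkeeping: stability of $F^*$ permits line subbundles of $F^*$ of degree up to roughly $e/(n+1)>0$, so no contradiction arises, and Proposition \ref{linelift} does not apply, since it governs liftings of subsheaves $\ox(-D)\subset\ox$ to $F$, not liftings of line subbundles of $F^*$ to isotropic line subbundles of $V$. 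The intermediate ranks are no better: for $n=4$, $a=1$, $b=3$ one gets $\deg A\le -\tfrac{1}{4}\left(e+3(g-1)\right)$ and $\deg B<\tfrac{3}{5}e$, hence only $\deg G<\tfrac{7}{20}e-\tfrac{3}{4}(g-1)<\tfrac{1}{8}(g-1)$ under the hypothesis $e<\tfrac{5}{2}(g-1)$, which does not force $\deg G<0$. Proposition \ref{quote} cannot rescue these cases either, because it is a criterion for a \emph{pair of Lagrangian} subbundles, not for isotropic subbundles of intermediate rank; and, as you yourself note, $F$ lies in the Brill--Noether locus $S_e$ and is not general, so you cannot import generic bounds on subsheaves of $F^*$. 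What is missing is a lifting criterion confining $[V]\in\Pi_j$ to an explicit small locus whenever an isotropic subbundle of rank $r$ with $1\le r\le n$ and prescribed intersection with $E$ exists; no such criterion is available in the paper, and producing one is essentially a new piece of work, not a routine dimension count.

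The paper avoids the entire case division by specialization. For $n\ge 2$ it takes $[V_0]={}^tj^*[\tilde V_0]$ with $[\tilde V_0]$ general in $H^1(X,\wedge^2 E)$, so that the symmetric form on $V_0$ degenerates and $V_0$ is an extension $0\to\ox\to V_0\to\tilde V_0\to 0$ of a \emph{stable} orthogonal bundle $\tilde V_0$ of rank $2n$; stability of $\tilde V_0$ forces every isotropic subbundle of $V_0$ of nonnegative degree to equal the kernel $\ox$, and the vanishing $h^0(X,E^*)=0$ (plus Lemma \ref{notisom}) shows this $\ox$ does not deform to a destabilizing subbundle of a general member of $\Pi_j$. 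If you wish to keep a direct argument, you must either prove the missing lifting criterion for isotropic subbundles of arbitrary rank, or adopt this degeneration; as written, your plan does not close.
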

\begin{proof}  
Firstly, we consider the case $n=1$. Suppose $V_0$ is an orthogonal bundle of rank 3 and $G \subset V_0$ an isotropic line subbundle of nonnegative degree. From the sequence $0 \to E \to V_0 \to F^* \to 0$ and its dual sequence, 
we see that $G$ is a subsheaf of both $F^*$ and $E^*$. From the sequence
\[
0 \to \ox \to F^* \xrightarrow{^t j} E^* \to 0,
\]
it follows that $G \cong E^*(-D)$ for some $D$ of degree $\le e$, and that $G$ lifts to a subsheaf of $F^*$. By Proposition \ref{linelift}, we have $[^t j] \in \Sec^e X$ for the embedded curve $X \subset \pp H^1(X, E)$. But since $e < g-1$ by hypothesis,
\[
\dim \Sec^e X \ \le \ 2e-1 \ < \ e+g-2 \ = \ \dim \pp H^1(X, E).
\]
Thus a general extension class $[^t j]$ yields a stable orthogonal bundle in $\Pi_j$.

Now suppose $n \ge 2$. Let $E \in U_X^s(n, -e)$ be general, and let $0 \to E \xrightarrow{j} F \to \ox \to 0$ be an extension. Let $0 \to E \to \tilde{V}_0 \to E^* \to 0$ be an extension whose class $[\tilde{V}_0]$ is a general point of $H^1(X, \wedge^2 E)$. By Proposition \ref{evenrankcrit}, the bundle $V_0$ admits an orthogonal structure.

Consider now the orthogonal extension $0 \to E \to V_0 \to F^* \to 0$ defined by $[V_0] \ = \ ^tj^*[\tilde{V}_0]$. We obtain a diagram
\[ \begin{CD} 0 @>>> E @>>> \tilde{V}_0 @>>> E^* @>>> 0 \\
 @. @| @AAA @AA^tjA @. \\
 0 @>>> E @>>> V_0 @>>> F^* @>>> 0 \\
 @. @. @AAA @AAA @. \\
 @. @. \ox @= \ox @. \end{CD} \]
By Lemma \ref{notisom} and Remark \ref{degenerate}, there is an exact sequence
\[ 0 \to \ox \to V_0 \xrightarrow{\omega} V_0^* \to \ox \to 0 \]
where $\omega$ defines a degenerate symmetric form on $V_0$ (the pullback of the form on $\tilde{V}_0$). By Lemma \ref{notisom}, however, a generic deformation of $V_0$ in $\Pi_j$ admits an orthogonal structure.

Suppose there is an isotropic subbundle $G$ of $V_0$ of nonnegative degree. Then we have a diagram
\[ \begin{CD} 0 @>>> \ox @>>> V_0 @>>> \tilde{V}_0 @>>> 0 \\
 @. @AAA @AAA @AAA @. \\
0 @>>> G_1 @>>> G @>>> G_2 @>>> 0 \end{CD} \]
where $G_1$ is either zero or $\ox$.  
Since $[ \tilde{V}_0 ]$ is general, $\tilde{V}_0$ is a stable orthogonal bundle by \cite[Theorem 3.4]{CH3}. If $G_2$ is nonzero, then it is isotropic in $\tilde{V}_0$ since $G$ is isotropic in $V_0$. Hence $\deg G = \deg G_2 < 0$. This shows that the only destabilizing subbundle of $V_0$  is $\ox $. 

Now we deform $V_0$ in $\Pi_j$ to get a family $\{V_\lambda \}$ whose general member is an orthogonal bundle lying on $\A_e$. By semicontinuity, a generic deformation $V_\lambda$ of $V_0$ in $\Pi_j$ can possibly be destabilized only by a line bundle  which is a deformation of $\ox \subset V_0$. Note that this at the same time yields a deformation  of $\ox$ in $F^*$. Such deformations are parameterized by $H^0(X, \Hom(\ox, F^*/\ox)) \cong H^0(X, E^*)$. Since \[
\mu(E^*) \ = \ \frac{e}{n} \ < \ \frac{n+1}{2n} (g-1) \ \le \ g-1,
\]
we have $h^0(X, E^*) = 0$ for a general $E \in U_X^s(n, -e)$. 
 Thus $\ox$ does not deform nontrivially in $F^*$. Since a general deformation $V_t$ does not have a lifting of $\ox \subset F^*$, we conclude that it is a stable orthogonal bundle in $\A_e$. 
\end{proof}

We remark that a general $V \in \A_e$ is stable also for $\frac{1}{2}(n+1)(g-1) \le e \le \frac{1}{2} \left( (n+1)(g-1) +3 \right)$, as will be shown in next section.  We expect the same is true for larger values of $e$, but we do not require this for the present applications.

\section{The Segre stratification} \label{Segrestrat}

Suppose $1 \leq e \leq \frac{1}{2} \left( (n+1)(g-1) + 3 \right)$, and let $E \in U_X^s(n, -e)$ be general. Let $0 \to E \xrightarrow{j} F \to \ox \to 0$ be a general extension. The goal of this section is to show that for a general orthogonal extension $V$ represented in $\Pi_j$, the Lagrangian subbundle $E$ is maximal in $V$. This will confirm that $t(V) = -2 \cdot \deg E$.
 
Consider an orthogonal bundle $V$ of rank $2n+1$ admitting two different Lagrangian subbundles $E$ and $\tilde{E}$, with orthogonal complements $F$ and $\tilde{F}$ respectively. Let $I$ and $H$ be the locally free parts of $E \cap \tilde{E}$ and $F \cap \tilde{F}$ respectively.
\begin{lem} \label{cap}
The subsheaf $I \subset H$ is a subbundle of corank 1.
\end{lem}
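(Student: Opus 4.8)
The plan is to derive the corank statement from a fiberwise linear-algebra computation combined with a saturation argument. First I would record the inclusion $I \subseteq H$. Since $E$ and $\tE$ are Lagrangian, we have $E \subseteq E^\perp = F$ and $\tE \subseteq \tE^\perp = \tF$, so that $E \cap \tE \subseteq F \cap \tF$ as subsheaves of $V$. Passing to saturations (locally free parts) inside $V$ is a monotone operation, so $I \subseteq H$ as subsheaves of $V$.

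The key step is the rank count. Because $I$ and $H$ are the saturations of $E \cap \tE$ and $F \cap \tF$, their ranks equal the generic fiber dimensions $\dim(E_x \cap \tE_x)$ and $\dim(F_x \cap \tF_x)$. At a general point $x$, nondegeneracy of the form gives $F_x = E_x^\perp$ and $\tF_x = \tE_x^\perp$, so that
\[ F_x \cap \tF_x \ = \ E_x^\perp \cap \tE_x^\perp \ = \ (E_x + \tE_x)^\perp. \]
Hence $\dim(F_x \cap \tF_x) = (2n+1) - \dim(E_x + \tE_x) = (2n+1) - \big(2n - \dim(E_x \cap \tE_x)\big) = \dim(E_x \cap \tE_x) + 1$, which gives $\rank H = \rank I + 1$.

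Finally, I would upgrade the sheaf inclusion $I \subseteq H$ to a subbundle inclusion. Since $I$ is by definition saturated in $V$, the quotient $V/I$ is torsion-free, hence locally free on the curve $X$. Then $H/I$ is a subsheaf of $V/I$, so it is itself torsion-free and therefore locally free, of rank $\rank H - \rank I = 1$. Thus $I$ is a subbundle of $H$ of corank $1$, as claimed.

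I expect the only point requiring genuine care to be the bookkeeping of saturations: one must confirm that taking locally free parts inside $V$ preserves the inclusion, and that being saturated in $V$ forces $I$ to be saturated in the subbundle $H$. The linear-algebra identity $(E_x + \tE_x)^\perp = E_x^\perp \cap \tE_x^\perp$ and the ensuing dimension count are then entirely routine.
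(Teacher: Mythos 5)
Your proof is correct and follows essentially the same route as the paper: the identity $F_x \cap \tF_x = (E_x + \tE_x)^\perp$ at a general fiber and the resulting count $\dim(F_x\cap\tF_x) = r+1$ is exactly the paper's argument. The only difference is that you spell out the saturation bookkeeping (torsion-freeness of $H/I$ on a curve) which the paper leaves implicit under ``it suffices to give a fiberwise argument.''
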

\begin{proof}
Since it suffices to give a fiberwise argument, we will regard $E$ and $F$ as vector spaces in the discussion below. We have
\[
F \cap \tF \ = \ E^\perp \cap \tE^\perp \ = \ (E + \tE)^\perp .
\]
Thus if $\dim (E \cap \tE) = r$, then $\dim (E + \tE) = 2n-r$ and $\dim(F \cap \tF) = r+1$.
\end{proof}
\begin{cor} \label{capcor}
In the above context, $H/I \cong \ox(-D)$ for some effective divisor $D$ with $\deg D = \deg I - \deg H$. Furthermore, some extension $H$ of $\ox(-D)$ by $I$ lifts to $F$ if and only if the class $j \in H^1(X, E)$ belongs to
\[
\Ker \left[ H^1(X, E) \to H^1(X, E/I) \to H^1(X, (E/I) (D)) \right].
\]
\end{cor}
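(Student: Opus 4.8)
The plan is to prove the two assertions separately: the first is a structural statement about the line bundle $H/I$, and the second is a cohomological lifting criterion of the same flavour as Proposition \ref{linelift}(2).

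For the first assertion, recall from Lemma \ref{Morth} that $F/E \cong \ox$, and compose the inclusion $H \hookrightarrow F$ with the quotient $F \to F/E \cong \ox$ to obtain a map $\psi \colon H \to \ox$. Since $I$ lies in $E$, it is contained in $\Ker \psi = H \cap E$, so $\psi$ factors through a morphism $\bar\psi \colon H/I \to \ox$. By Lemma \ref{cap} the sheaf $H/I$ is a line bundle, so it suffices to show $\bar\psi \ne 0$: a nonzero morphism of line bundles is injective with torsion cokernel, identifying $H/I$ with a subsheaf $\ox(-D) \subseteq \ox$ for an effective divisor $D$, and comparing degrees gives $-\deg D = \deg H - \deg I$, i.e. $\deg D = \deg I - \deg H$.

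The crux, which I expect to be the main obstacle, is thus the nonvanishing of $\bar\psi$, equivalently $H \not\subseteq E$ generically; a priori $H$ could fail to surject onto $F/E$, and ruling this out seems to need a symmetric rank count rather than a naive dimension estimate. I would argue fibrewise, as in the proof of Lemma \ref{cap}. Since $E$ and $\tilde E$ are isotropic, one checks $E \cap \tilde E \subseteq (E + \tilde E)^\perp = F \cap \tilde F = H$, so the kernel of the combined map
\[ H \ \longrightarrow \ (F/E) \oplus (\tilde F/\tilde E) \ \cong \ \ox \oplus \ox \]
is exactly $H \cap E \cap \tilde E = E \cap \tilde E$, of generic rank $\rank H - 1$ by Lemma \ref{cap}. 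Hence the image has generic rank $1$, so at least one of the two projections is generically nonzero; by the symmetry exchanging $(E,F)$ with $(\tilde E,\tilde F)$, which fixes $H$ and swaps the projections, both are nonzero. In particular $\psi$, the first projection, is nonzero, as required.

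For the second assertion, I would reduce the lifting of $H$ to $F$ to a lifting of the inclusion $\iota \colon \ox(-D) \hookrightarrow \ox$. Pushing the extension $0 \to E \xrightarrow{j} F \to \ox \to 0$ out along $E \to E/I$ yields $0 \to E/I \to F/I \to \ox \to 0$, whose class is the image $\bar j$ of $j$ under $H^1(X,E) \to H^1(X,E/I)$. A subsheaf $H \hookrightarrow F$ inducing the given $I \subseteq E$ on the sub and $\ox(-D) \subseteq \ox$ on the quotient corresponds, by passing to the quotient by $I$ and then forming a fibre product back over $F/I \to \ox$, to a lift $s \colon \ox(-D) \to F/I$ of $\iota$. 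The obstruction to such a lift lies in $\mathrm{Ext}^1(\ox(-D), E/I) \cong H^1(X, (E/I)(D))$ and equals $\iota^* \bar j$, namely the image of $j$ under the composite $H^1(X,E) \to H^1(X,E/I) \to H^1(X,(E/I)(D))$. Thus a lift, hence a lifting $H$, exists if and only if this image vanishes, which is the asserted kernel condition. This step is the relative form of Proposition \ref{linelift}(2) and should be routine once the fibre-product correspondence between such $H$ and lifts $s$ is set up carefully.
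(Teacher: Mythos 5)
Your treatment of the second assertion is correct and is essentially the paper's own argument: reduce the lifting of $H$ to the lifting of $\ox(-D) \hookrightarrow \ox$ to $F/I$, and read off the obstruction $\iota^*\bar j \in H^1(X,(E/I)(D))$. The gap is in the first assertion, at exactly the step you flag as the crux. From the fact that the image of $H \to (F/E)\oplus(\tilde F/\tilde E)$ has generic rank $1$ you correctly deduce that at least one of the two projections is generically nonzero, but the appeal to symmetry does not upgrade this to both being nonzero: the swap $(E,F)\leftrightarrow(\tilde E,\tilde F)$ is a symmetry of the hypotheses, not an automorphism of the given configuration, so a hypothetical configuration with $H \subseteq E$ generically but $H \not\subseteq \tilde E$ generically would still satisfy ``at least one projection is nonzero'' and would simply be carried by the swap to the mirror configuration; no contradiction arises. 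Put differently, knowing that the image is generically a line in $\cc\oplus\cc$ does not preclude that line from being a coordinate axis, and your rank count alone cannot distinguish these cases.

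What is actually needed is the fibrewise identity $\mathsf H\cap\mathsf E=\mathsf E\cap\tilde{\mathsf E}$ at a general point of $X$, and this follows from the bound on isotropic subspaces rather than from a rank count: since $\mathsf H=\mathsf E^\perp\cap\tilde{\mathsf E}^\perp$ and $\mathsf E\subseteq\mathsf E^\perp$, any $v\in\mathsf E\cap\mathsf H=\mathsf E\cap\tilde{\mathsf E}^\perp$ is isotropic and orthogonal to $\tilde{\mathsf E}$, so $\tilde{\mathsf E}+\cc v$ is isotropic in the $(2n+1)$-dimensional fibre $\mathsf V_x$; as isotropic subspaces have dimension at most $n$, this forces $v\in\tilde{\mathsf E}$. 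Hence $\mathsf H\cap\mathsf E=\mathsf E\cap\tilde{\mathsf E}$, which by Lemma \ref{cap} has dimension one less than $\mathsf H$, so $H\not\subseteq E$ generically and $\bar\psi\neq 0$. (This is the same device used in the proof of Lemma \ref{OG} (3).) With this substitution your proof is complete and coincides in substance with the paper's, which passes over the point with ``it is clear'': the diagram (\ref{IH}) is built on precisely this identification of the generic fibre of $H\cap E$ with that of $I$.
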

\begin{proof}
As a consequence of Lemma \ref{cap}, we obtain the following diagram for a torsion sheaf $\tau_D$ associated to some effective divisor $D$:
\begin{equation} \label{IH}
\begin{CD}  
0 @>>> E/I @>>> F/H @>>> \tau_D @>>> 0  \\
 @. @AAA @AAA  @AAA @. \\
0 @>>> E @>>> F @>>> \ox @>>> 0 \\
 @. @AAA @AAA @AAA @. \\
0 @>>> I @>>> H @>>> H/I @>>> 0 \end{CD} 
\end{equation}
From this it is clear that $H/I$ is of the form $\ox(-D)$ as stated.

For the rest: It is easy to see that some extension $0 \to I \to H \to \ox(-D) \to 0$ lifts to $F$ if and only if $\ox(-D) \subseteq \ox$ lifts to $F/I$. 
This is equivalent to the statement that $j$ belongs to $\Ker \left[ H^1 (X, E) \to H^1 (X, E/I) \to H^1 (X, (E/I)(D)) \right]$.
\end{proof}
Recall now that $H^1(X, \wedge^2 F)$ admits the filtration
\[ H^1(X, \wedge^2 E) \ \subset \ \Pi_j \ \subset \ H^1(X, \wedge^2 F). \]
We write $\widetilde{p_H}$ for the restriction to $\Pi_j$ of the natural surjection $p_H \colon H^1(X, \wedge^2 F) \to H^1(X, \wedge^2 (F/H))$.
\begin{lem} \label{modify}
The map $\widetilde{p_H} \colon \Pi_j \to H^1(X, \wedge^2 (F/H))$ is surjective.
\end{lem}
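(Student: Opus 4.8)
The plan is to prove surjectivity by restricting $p_H$ to the smaller subspace $H^1(X, \wedge^2 E) \subseteq \Pi_j$ coming from the exact sequence (\ref{twofour}), and showing that $p_H$ is already onto on this subspace. Since
\[ H^1(X, \wedge^2 E) \ \subseteq \ \Pi_j \ \subseteq \ H^1(X, \wedge^2 F), \]
it suffices to show that the composite $H^1(X, \wedge^2 E) \hookrightarrow H^1(X, \wedge^2 F) \xrightarrow{p_H} H^1(X, \wedge^2(F/H))$ is surjective; then a fortiori $\widetilde{p_H}$ is.

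First I would describe the image of $E$ under the quotient $q \colon F \to F/H$. By the diagram (\ref{IH}) of Corollary \ref{capcor}, the kernel of $q|_E$ is $I$, so $q|_E$ factors as $E \twoheadrightarrow E/I \overset{\iota}{\hookrightarrow} F/H$, where $\iota$ is the inclusion with torsion cokernel $\tau_D$ appearing in the top row of (\ref{IH}). Taking second exterior powers yields a commutative square of sheaf morphisms
\[
\begin{CD}
\wedge^2 E @>>> \wedge^2 F \\
@VVV @VV{\wedge^2 q}V \\
\wedge^2(E/I) @>{\wedge^2 \iota}>> \wedge^2(F/H),
\end{CD}
\]
whose right-hand vertical arrow $\wedge^2 q$ induces $p_H$ on $H^1$.

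The argument then reduces to two surjectivity statements on $H^1$, each valid because $X$ is a curve. The left vertical arrow $\wedge^2 E \to \wedge^2(E/I)$ is a surjection of vector bundles, as $I$ is a subbundle of $E$; hence $H^1(X, \wedge^2 E) \to H^1(X, \wedge^2(E/I))$ is surjective, the cokernel being governed by $H^2(X, -) = 0$. For the bottom arrow, $\iota$ is injective and generically an isomorphism, so $\wedge^2 \iota$ is injective with torsion cokernel $T$; since $H^1(X, T) = 0$, the map $H^1(X, \wedge^2(E/I)) \to H^1(X, \wedge^2(F/H))$ is also surjective. Chasing the square, the image of $H^1(X, \wedge^2 E)$ under $p_H$ is therefore all of $H^1(X, \wedge^2(F/H))$, which gives the lemma.

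I expect the only genuine subtlety to be the torsion quotient $\tau_D$: because $E/I$ sits inside $F/H$ merely as a full-rank subsheaf rather than as a subbundle, the filtration on $H^1(X, \wedge^2 F)$ cannot simply be transported down to $F/H$. The key observation is that this torsion is in fact harmless, since torsion sheaves on a curve have vanishing $H^1$; it is precisely this vanishing, together with that of $H^2$, that makes both maps on $H^1$ surjective. In the degenerate cases $\rank(F/H) \le 1$ one has $\wedge^2(F/H) = 0$ and the statement is vacuous.
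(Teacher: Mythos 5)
Your proposal is correct and follows essentially the same route as the paper: both reduce to showing that the composite $H^1(X,\wedge^2 E) \to H^1(X,\wedge^2(E/I)) \to H^1(X,\wedge^2(F/H))$ coming from the diagram (\ref{IH}) is surjective and conclude by commutativity. The paper merely asserts the surjectivity of that composite, whereas you justify it explicitly via the vanishing of $H^2$ on a curve and of $H^1$ of a torsion sheaf; this is a correct filling-in of the same argument, not a different one.
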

\begin{proof}
It suffices to show that the restriction of $\widetilde{p_H}$ to the subspace $H^1(X, \wedge^2 E)$ of $\Pi_j$ is surjective. By (\ref{IH}), we have a commutative diagram
\[
\begin{CD}
H^1 (X, \wedge^2 E) @>>> H^1 (X, \wedge^2 F) \\
 @VVV @VVV \\
H^1 (X, \wedge^2(E/I)) @>>> H^1 (X, \wedge^2(F/H)).
\end{CD}
\]
where 
the composition $H^1 (X, \wedge^2 E) \to H^1 (X, \wedge^2 (E/I)) \to H^1 (X, \wedge^2 (F/H))$ is surjective. The statement follows by commutativity of the diagram.
\end{proof}

\noindent Now we will obtain a modification of Proposition \ref{quote} (2). Let $E$ be a general stable bundle of rank $n$ and degree $-e < 0$.  Let $V$ be an orthogonal bundle of rank $2n+1$, admitting $E$ as a Lagrangian subbundle, so that $E^\perp$ is an extension $0 \to E \stackrel{j}{\to} F \to \ox \to 0$.

\begin{lem} \label{liftingcrit} Suppose that $V$ has another Lagrangian subbundle $\tE$ of degree $-\tilde{e}$ with $\tE^\perp = \tF$. As before, write $I$ and $H$ for the locally free parts of $E \cap \tE$ and $F \cap \tF$ respectively, and write $\rank I = r$ and $\deg H = -h$.
\begin{enumerate}
\renewcommand{\labelenumi}{(\arabic{enumi})}
\item The image of the class $j$ under the surjection $H^1(X, E) \to H^1(X, E/I) $ lies inside the affine cone of $\Sec^d \pp (E/I)$, where $d = \deg I + h \ge 0$.
\item Write $k := \frac{1}{2} (e + \tilde{e} - 2h)$. (Note that $e + \tilde{e} \equiv 0 \mod 2$, by Theorem \ref{thm1} (1).) Then $k \ge 0$.
\item If $n \geq 2$ and $r \le n-2$, the image of the class $[V]$ under $\widetilde{p_H} \colon \Pi_j \to H^1(X, \wedge^2 (F/H))$ lies inside the affine cone of $\Sec^k \Gr(2, F/H)$.
\end{enumerate}
In particular, when $E$ and $\tE$ intersect generically in zero, the above statements holds with $\deg I = 0$ and $r = 0$. When $n = 1$, parts (1) and (2) hold with $I = 0$.  
\end{lem}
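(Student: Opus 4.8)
The plan is to prove part (1) directly inside $V$, and to deduce parts (2) and (3) from the corresponding even-rank results---the inequality (\ref{degh}) and Proposition~\ref{quote}~(2)---applied to the rank $2n+2$ orthogonal bundle $W = V \perp \ox$, using the correspondence of Proposition~\ref{Lagsubbs}.

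For part (1) I would argue as follows. By Corollary~\ref{capcor}, the intersection data gives $H/I \cong \ox(-D)$ with $\deg D = \deg I - \deg H = \deg I + h =: d \ge 0$, together with the fact that $j$ lies in the kernel of the composite $H^1(X, E) \to H^1(X, E/I) \to H^1(X, (E/I)(D))$. The image of $j$ in $H^1(X, E/I)$ is the class of $0 \to E/I \to F/I \to \ox \to 0$, and it lies in $\Ker[H^1(X, E/I) \to H^1(X, (E/I)(D))]$. Since $E/I$ is stable of negative degree for general $E$, I would then apply Proposition~\ref{linelift}~(2) with $E/I$ in place of $E$ to place this image in the affine cone of $\Sec^d \pp(E/I)$.

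For parts (2) and (3) I would pass to $W = V \perp \ox$. By Proposition~\ref{Lagsubbs}~(1) the Lagrangian subbundle $E \subset V$ lifts to a Lagrangian subbundle $F' \subset W$ with $F' \cap V = E$, $\deg F' = \deg E$, and even-rank extension class $[W] = j_*[V]$, so that $F' \cong F$ as an extension of $\ox$ by $E$. The subbundle $\tE$ has two lifts to $W$, one in each component of the fibrewise orthogonal Grassmannian (the Remark after Proposition~\ref{Lagsubbs}); I would choose the lift $\tilde{G}$ for which $F' \cap \tilde{G}$ has generic rank $r+1$. That exactly one lift has this property follows from Lemma~\ref{OG}~(2): the projection $F' \cap \tilde{G} \to W/V \cong \ox$ has kernel $E \cap \tE$ of generic rank $r$, so the generic rank of $F' \cap \tilde{G}$ is $r$ or $r+1$ according to the component, and these have opposite parity. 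Writing $H_W$ for the locally free part of $F' \cap \tilde{G}$, I would then establish the identification $H_W \cong H$; granting it, (\ref{degh}) for the pair $F', \tilde{G}$ reads $\deg H_W \ge \frac{1}{2}(\deg F' + \deg \tilde{G})$ and rearranges to $k \ge 0$ (part (2)), while Proposition~\ref{quote}~(2) applied to $W$---with $n$ replaced by $n+1$, the hypothesis $r \le n-2$ ensuring $\rank H_W = r+1 \le n-1$---gives $p_{H_W}([W]) \in \Sec^{k}\Gr(2, F'/H_W)$, which becomes $\widetilde{p_H}([V]) \in \Sec^k \Gr(2, F/H)$ under the identification, since $p_H$ restricts to $\widetilde{p_H}$ on $\Pi_j$ (part (3)). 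The remaining cases, $E$ and $\tE$ meeting generically in zero ($I = 0$, $r = 0$) and $n = 1$ (distinct line subbundles, so $I = 0$ and only parts (1), (2) applying), would then follow by specialization.

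The hard part will be the identification $H_W \cong H$, which is where the odd/even interplay is really used. I would prove it by writing the two lifts explicitly inside $W = V \oplus V^\perp$ as $F' = \{(f, \lambda_0 \bar f) : f \in F\}$ and $\tilde{G} = \{(\tilde f, \mu_0 \bar{\tilde f}) : \tilde f \in \tF\}$, where $\bar f$ and $\bar{\tilde f}$ denote the images in $F/E \cong \ox$ and $\tF/\tE \cong \ox$; this identifies $F' \cap \tilde{G}$ with the subsheaf of $F \cap \tF$ cut out by $\lambda_0 \bar f = \mu_0 \bar{\tilde f}$. The task is to show that for the chosen lift this condition is generically trivial, so that saturating yields $H_W \cong H$ with $\deg H_W = \deg H = -h$ and $F'/H_W \cong F/H$; this comes down to comparing the two quotient maps $F \cap \tF \to \ox$ on the locally free part $H$, both of which have kernel $I$. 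A secondary point to check is that the generality required of $F = E^\perp$ by Proposition~\ref{quote} is supplied by the generality of $E$ together with Lemma~\ref{Fstable}.
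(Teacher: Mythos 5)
Your proposal follows essentially the same route as the paper: part (1) from Corollary \ref{capcor} via Proposition \ref{linelift}, and parts (2)--(3) by passing to $W = V \perp \ox$, identifying the locally free part of the intersection of the two lifted Lagrangian subbundles with $H$, and invoking the even-rank results (\ref{degh}) and Proposition \ref{quote} (2). The only divergence is in how that identification is established --- you select the correct lift by a fibrewise parity argument and an explicit description inside $V \oplus V^\perp$, whereas the paper dualizes a commutative diagram of quotient maps onto $F^*$, $\tF^*$ and $H^*$ --- but this is a difference of detail, not of strategy.
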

\begin{proof}
Statement (1) follows from a geometric interpretation of Corollary \ref{capcor} by using Proposition \ref{linelift}.

For the rest: By Proposition \ref{Lagsubbs}, the orthogonal bundle $W = V \perp \ox$ contains two Lagrangian subbundles $F'$ and $\tF'$ isomorphic to $F$ and $\tF$ respectively. We claim that $F' \cap \tF'$ also has locally free part isomorphic to $H$ in $W$. To see this, note that by Lemma \ref{cap} and the diagram (\ref{WV}) there exists a commutative diagram
\[ \xymatrix{ V \ar[rrr] \ar[ddd] \ar[dr] & & & F^* \ar[ddd] \\
 & W \ar[urr] \ar[ddl] \ar[drdr] & & \\
 & & & \\
 \tF^* \ar[rrr] & & & H^* } \]
Dualizing, we see that the Lagrangian subbundles $F'$ and $\tF'$ in $W \cong W^*$ also intersect in a copy of $H$. Statements (2) and (3) now follow from Proposition \ref{quote} (2). \end{proof}


Now we can prove the following statement. 
\begin{prop} \label{finite} \ Assume that $E$ is general in $U_X (n, -e)$ and that $F$ is a general extension of $\ox$ by $E$. For $0 < e < \frac{1}{2}(n+1) (g-1)$, a general orthogonal extension $0 \to E \to V \to F^* \to 0$ has no Lagrangian subbundle of degree $\ge -e$ other than $E$. Therefore, $t(V) = 2e$ and $E$ is the unique maximal Lagrangian subbundle of $V$. 
\end{prop}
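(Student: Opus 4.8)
The plan is to suppose that a general orthogonal extension $V$ admits a Lagrangian subbundle $\tE \neq E$ of degree $-\tilde e \geq -e$, and to show that the locus of such $V$ is a proper subvariety of $\pp \Pi_j$ for general $E$ and general $j$, so that the general $V$ has no such $\tE$. Writing $I$ and $H$ for the locally free parts of $E \cap \tE$ and $E^\perp \cap \tE^\perp$, with $\rank I = r$, I would first record the reduction $r \leq n-1$: if $r = n$, then $E$ and $\tE$ are saturated subbundles of $V$ agreeing at the generic point, hence equal. The bad locus is then estimated by stratifying according to the discrete invariants $r$, $h := -\deg H$ and $\tilde e$ (finitely many combinations are relevant, as $\tilde e \le e$, the degrees of subbundles of the fixed $V$ are bounded, and stability of $F$ bounds $h$), and the argument splits into two cases according to the value of $\rank(F/H) = n-r$.

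In the main case $r \le n-2$ I would apply Lemma \ref{liftingcrit} (3): the image $\widetilde{p_H}([V])$ must lie in the affine cone over $\Sec^k \Gr(2, F/H)$, where $k = \tfrac12(e + \tilde e - 2h)$. Since $\widetilde{p_H} \colon \Pi_j \to H^1(X, \wedge^2(F/H))$ is surjective with fibres of constant dimension (Lemma \ref{modify}), and since $\phi_a$ is injective on a general fibre (Proposition \ref{quote} (1)), the secant variety $\Sec^k \Gr(2, F/H)$ is a proper subvariety of $\pp H^1(X, \wedge^2(F/H))$ in the relevant range of $k$; hence its preimage is proper in $\pp \Pi_j$ for each fixed $H$. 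Taking the union over the family of admissible $H \subset F$ and over the finitely many discrete invariants, the decisive numerical inequality — comparing $\dim \Sec^k \Gr(2, F/H)$ and the dimension of the family of such $H$ against $\dim \pp H^1(X, \wedge^2(F/H))$, after computing the latter from $\rank(F/H) = n-r$ and $\deg(F/H) = h-e$ and bounding $\deg H$ by stability of $F$ — collapses exactly to $e < \tfrac12(n+1)(g-1)$. This confirms that the bad locus is proper in this range.

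The remaining case $r = n-1$, which in particular covers $n=1$, is the delicate one: here $\rank(F/H) = 1$, so the secant criterion of Lemma \ref{liftingcrit} (3) is vacuous. Passing to $W = V \perp \ox$ via Proposition \ref{Lagsubbs}, the Lagrangian subbundles $F'$ and $\tF'$ of $W$ now meet in rank $n$, so Corollary \ref{rank1case} applies and forces $H^\perp/H \cong (F'/H) \oplus (\tF'/H)$ together with $k = 0$ and $h = \tfrac12(e+\tilde e)$. Instead I would invoke Lemma \ref{liftingcrit} (1): since $E/I$ is then a line bundle and $\pp(E/I) \cong X$, the image of $j$ under $H^1(X, E) \to H^1(X, E/I)$ must lie on $\Sec^d X$, where $d = \deg I + h$. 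A dimension count in $\pp H^1(X, E/I)$ — summing over the family of rank-$(n-1)$ subbundles $I \subset E$ and using that $E$ is general, so that the degrees and moduli of such $I$ are controlled by the Hirschowitz bound — again shows that the general extension class $j$ avoids all these secant loci precisely when $e < \tfrac12(n+1)(g-1)$. Combining the two cases shows that for general $E$, general $j$, and general $[V] \in \Pi_j$, the bundle $V$ has no Lagrangian subbundle of degree $\ge -e$ other than $E$; since $E$ itself has degree $-e$, it follows that $t(V) = 2e$ and that $E$ is the unique maximal Lagrangian subbundle.

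The step I expect to be the main obstacle is twofold. First is the boundary case $r = n-1$, where Lemma \ref{liftingcrit} (3) degenerates and one must fall back on the weaker part (1) together with the rigidity supplied by Corollary \ref{rank1case}. Second, and underlying both cases, is the bookkeeping of the positive-dimensional families of intermediate subbundles $H$ (respectively $I$): it does not suffice to fix one such subbundle, and one must verify that the union of the corresponding bad loci remains of positive codimension. It is precisely in balancing the dimension of these families against the size of the governing secant varieties that the hypothesis $e < \tfrac12(n+1)(g-1)$ is consumed, and getting this balance right is the crux of the proof.
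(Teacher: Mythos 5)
Your proposal follows essentially the same route as the paper: the paper likewise stratifies the bad locus by the discrete invariants $(r, h, \tilde e)$, applies Lemma \ref{liftingcrit} (3) together with the surjectivity of $\widetilde{p_H}$ from Lemma \ref{modify} and a secant-variety dimension count in the range $r \le n-2$ (arriving at the inequality $e < \tfrac{1}{2}(n+r+1)(g-1)$, whose binding case is $r=0$), and falls back on Lemma \ref{liftingcrit} (1) plus a count over the intermediate subbundles in the degenerate case $r = n-1$. The only organizational differences are that the paper parameterizes the intermediate subbundle by $I \subset E$ rather than $H \subset F$ (so that Laumon's very-stability of the general $E$ controls the relevant Quot scheme --- a point worth attending to in your version, since $F$ is constrained to lie in $S_e$ and is not a general bundle of its rank and degree) and that it disposes of $r = n-1$ for $n \ge 3$ by showing no such $I$ can exist on a general $E$, leaving only $n \le 2$ for the explicit count.
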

\begin{proof}
We will prove the proposition in the following way: Recall the double fibration $\A_e \xrightarrow{\pi_e} J_e \xrightarrow{\tau} \tsu$ described in Proposition \ref{taut}. By Proposition \ref{oddrankcrit}, for fixed $\bar{E} \in \tsu$ mapping to $E \in U_X (n, -e)$, the fiber $\left( \tau_e \circ \pi_e \right)^{-1}(\bar{E})$ parameterizes all orthogonal bundles $V$ containing $E$ as a Lagrangian subbundle. We will show that for general $E$, the locus of such $V$ containing a Lagrangian subbundle of degree $\ge -e$ apart from the original $E$ has positive codimension in $\left( \tau_e \circ \pi_e \right)^{-1}(\bar{E})$. 

Assume, then, that an orthogonal extension $0 \to E \to V \to F^* \to 0$ has another Lagrangian subbundle $\tilde{E}$ of degree $-\tilde{e} \ge -e$. Let $\tF, I$ and $H$ be as in Lemma \ref{liftingcrit}, with $\rank I = r$ and $\deg H = -h$. 

Firstly, suppose $n = 1$, so $r=0$. By (\ref{IH}), we see that $H = \ox(-D)$ and $d = h$. By Lemma \ref{liftingcrit} (1), the class $j \in \pp H^1(X, E)$ lies inside $\Sec^h X$, where $h \le \frac{1}{2}(e + \tilde{e}) \le e$. As $e < g-1$, we have
\[
\dim \Sec^h X \ \le \ 2e-1\ < \ e+g-2 \ = \ \dim \pp H^1(X, E).
\]
Thus a general $j$ lies outside $\Sec^h X$ in $H^1 (X, E)$, and the unique orthogonal extension $0 \to E \to V \to F^* \to 0$ discussed in \S \ref{rkthree} has the unique maximal Lagrangian subbundle $E$.

Now suppose $n \ge 2$ and $0 \le r \le n-2$. 
To bound the dimension of those $V$ containing such an $\tE$, we need to compute four dimensions. Firstly, by Lemma \ref{liftingcrit} (1), the class $j$ varies inside an algebraic subset of dimension bounded by 
\[
D_1 :=  \dim \Sec^d \pp (E/I) + 1 + \dim \Ker \left[ H^1 (X, E) \to H^1 (X, E/I) \right].
\]
We have $\dim \Sec^d \pp (E/I) \ \le \ d(n-r-1) - 1 \ = \ (\deg I + h)(n-r-1) - 1$. 
Moreover, the dimension of the fiber of $H^1 (X, E) \to H^1(X, E/I)$ is bounded by $h^1(X, I) = h + r(g-1)$, since $h^0(X, I) \le h^0(X, E) = 0$. Thus we have
\[
D_1 \ \le \ (\deg I + h)(n-r-1) + h + r(g-1).
\]
Secondly, $\dim \Gr(2, F/H) = 2(n-r-2) + 1$ and 
\[
D_2:= \dim \Sec^{\frac{1}{2}(e+\tilde{e}-2h)} \Gr(2, F/H)  \ \le \ (e+\tilde{e}-2h)(n-r-1) -1.
\]
Thirdly, since $\text{rk}(\wedge^2 (F/H)) = \frac{1}{2} (n-r)(n-r-1)$ and $\deg (\wedge^2 (F/H)) = -(n-r-1)(e-h)$, we have
\[
\begin{split}
D_3 &:= \dim \Pi_j - h^1(\wedge^2 (F/H)) \\ &\le \ \dim \Pi_j - (n-r-1)(e-h) - \frac{(n-r)(n-r-1)}{2}(g-1).
\end{split}
\]
By Lemma \ref{liftingcrit} (3), for fixed $j$, the classes $[V]$ vary in a locus of dimension $(D_2 + 1) + D_3$.

Finally, the subbundle $I$ of $E$ varies in a Quot scheme whose dimension is $h^0 (X, \Hom(I, E/I))$. By Laumon \cite[Proposition 3.5]{Lau}, since $E$ is general, it is very stable. Thus by Lange--Newstead \cite[Lemma 3.3]{LaNe} we have $h^1 (X, \Hom(I, E/I)) = 0$ for all subbundles $I \subset E$. Thus we compute
\[
D_4:= h^0 (X, \Hom(I, E/I)) = -n \cdot \deg I - re - r(n-r)(g-1).
\]

We now compare the sum $S_1 = D_1 + (D_2 + 1) + D_3 + D_4$ with the dimension $S_2$ of the fiber $\left( \tau_e \circ \pi_e \right)^{-1} (\bar{E})$. 
We have
\[
S_2 := \dim \Pi_j + \dim H^1(X, E) = \dim \Pi_j + e + n(g-1).
\]
Computing, we find that $S_1 < S_2$ if 
\begin{equation} \label{ineq}
h -(r+1)e + (n-r-1)\tilde{e} - (r+1) \deg I \ < \ \frac{(n-r)(n+r+1)}{2}(g-1).
\end{equation}
From Corollary \ref{capcor} it follows that $-\deg I \le h$. Furthermore, $\tilde{e} \leq e$ by hypothesis. Thus by Lemma \ref{liftingcrit} (2), we have $h \le \frac{1}{2}(e + \tilde{e}) \leq e$. Applying these inequalities, we see that the expression on the left is bounded above by
\[ e -(r+1)e + (n-r-1)e + (r+1)e 
\ = \ e(n - r). \]
Thus the required inequality (\ref{ineq}) would follow from
\begin{equation} e \ < \ \frac{(n+r+1)(g-1)}{2}. \label{mainineq} \end{equation}
This is satisfied for $r \ge 0$ by the hypothesis $e < \frac{1}{2}(n+1)(g-1)$.

Now we need only to deal with the case when $r = n-1 \ge 1$. Since $E$ is general,
\[
-(n-1)e - n \cdot \deg I \ \ge \ (n-1)(g-1).
\]
Combining with the inequality $-\deg I \le h \le e$, we get $(n-1)(g-1) \le e$. From the hypothesis $e < \frac{1}{2}(n+1)(g-1)$, we have $n < 3$. 
Thus it remains to consider the case when $n = 2$ and $r = \rank I = 1$.  In this case, we claim that given a general $E$, the extensions $j \in H^1 (X, E)$ admitting a diagram of the form (\ref{IH}) are special. Indeed, from the previous computations, the dimension of the locus of such extensions is bounded by $D_1 + D_4$, which is computed in this case as:
\[
D_1 + D_4 \ \le \ [ h + (g-1)] \ + \  [-2  \deg I - e - (g-1)] \ = \ h-e - 2 \deg I \ \le \ 2e.
\]
On the other hand, $h^1(X, E) = e + 2(g-1)$. From the assumption $e < \frac{3}{2}(g-1)$, we get
\[
D_1 + D_4 \ \le \ 2e \ < \ e + \frac{3}{2}(g-1) \ < \ h^1(X, E).
\]
This confirms the claim.
\end{proof}

Now we consider the consequences of Proposition \ref{finite}. By Proposition \ref{generalstable}, for $e < \frac{1}{2}(n+1)(g-1)$, a general point $V \in \A_e$ represents a stable orthogonal bundle. 
As discussed in \S \ref{stability}, there is a rational moduli map $\alpha_e \colon \A_e \dashrightarrow MO_X(2n+1)$.

\begin{theorem}  \label{thm3}
\begin{enumerate}
\renewcommand{\labelenumi}{(\arabic{enumi})}
\item For each even number $t$ with $0 < t < (n+1)(g-1)$, the stratum $MO_X(2n+1;t)$ is nonempty and irreducible of dimension equal to $\frac{1}{2} n(3n+1)(g-1) + \frac{1}{2}nt$.
\item For $(n+1)(g-1) \le t \le (n+1)(g-1) + 3$, the stratum $MO_X(2n+1;t)$ is dense in a component $MO_X(2n+1)^\pm$.
\end{enumerate}
\end{theorem}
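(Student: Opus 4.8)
My plan is to deduce part (1) from the parameter space $\A_e$ together with Propositions \ref{generalstable} and \ref{finite}, and then to deduce part (2) from part (1) using the upper bound of Proposition \ref{oddbound} and the semicontinuity of $t$. For part (1), put $e = t/2$, so that $0 < e < \frac{1}{2}(n+1)(g-1)$ and both propositions apply. First I would note that $\A_e$ is irreducible: by Proposition \ref{taut} it is a tower of projective bundles, with fibres $\pp H^1(X,E)$ and $\pp\Pi_j$, over the finite \'etale cover $\tsu$ of the irreducible variety $U_X^s(n,-e)$; since the orthogonal bundle attached to a point depends only on its image in $U_X^s(n,-e)$, the family of bundles carried by $\A_e$ is irreducible, and hence so is the closure of the image of the moduli map $\alpha_e$. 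By Proposition \ref{generalstable} a general point of $\A_e$ is stable, and by Proposition \ref{finite} it satisfies $t(V) = 2e = t$; thus $\alpha_e$ maps a dense open subset $\A_e^\circ$ into $MO_X(2n+1;t)$, which is therefore nonempty. Conversely every $V$ in the stratum has a maximal Lagrangian subbundle of degree $-e$ and so is represented in $\A_e$, whence $MO_X(2n+1;t)$ is a dense constructible subset of the irreducible variety $\overline{\alpha_e(\A_e^\circ)}$ and is therefore irreducible.

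For the dimension I would exploit the uniqueness in Proposition \ref{finite}: over $\A_e^\circ$ the maximal Lagrangian subbundle $E \subset V$ is unique, so $E$, the class $j = [E^\perp] \in \pp H^1(X,E)$ and the class $[V] \in \pp\Pi_j$ are all recovered from $V$, and the only remaining freedom is the finite choice of a lift of $E$ to $\tsu$. Hence $\alpha_e$ is generically finite onto its image, and by Proposition \ref{taut}(3) we get $\dim MO_X(2n+1;t) = \dim\A_e = \frac{1}{2}n(3n+1)(g-1) + ne = \frac{1}{2}n(3n+1)(g-1) + \frac{1}{2}nt$.

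For part (2) I would argue by dimension and semicontinuity rather than by a construction at the boundary. Fix a component $MO_X(2n+1)^\pm$; it is irreducible of dimension $n(2n+1)(g-1)$ and, by Theorem \ref{thm1}, equals the disjoint union of those strata $MO_X(2n+1;t)$ with $t$ even in one fixed residue class modulo $4$. Substituting $t = (n+1)(g-1)$ into the formula of part (1) yields exactly $n(2n+1)(g-1)$, so every stratum with $t < (n+1)(g-1)$ has dimension strictly smaller than the component. These strata are finite in number, so their union is not dense; hence the generic value of $t$ on the component, which by semicontinuity of $t$ is the maximal value $t_{\max}^\pm$ attained there, satisfies $t_{\max}^\pm \ge (n+1)(g-1)$. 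Together with the bound $t_{\max}^\pm \le (n+1)(g-1)+3$ from Proposition \ref{oddbound} and the parity constraint of Theorem \ref{thm1}, this forces $t_{\max}^\pm$ to be the unique even integer in $[(n+1)(g-1),(n+1)(g-1)+3]$ lying in the correct residue class modulo $4$. The stratum at $t_{\max}^\pm$ is then open, hence dense, in the component.

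The step I expect to be the main obstacle is in part (1): showing that an arbitrary $V$ in the stratum is genuinely represented in $\A_e$, that is, that $MO_X(2n+1;t) \subseteq \overline{\alpha_e(\A_e^\circ)}$. The difficulty is that $\A_e$ is built only from stable subbundles $E \in U_X^s(n,-e)$, whereas the maximal Lagrangian subbundle of a given $V$ in the stratum need a priori only be semistable. Handling the strictly semistable case, showing that such $V$ still lie in the closure of the image as in the even-rank treatment of \cite{CH3}, is the delicate point; the remaining verifications are dimension bookkeeping.
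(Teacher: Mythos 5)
Your overall route coincides with the paper's: part (1) via the parameter space $\A_e$ together with Propositions \ref{generalstable} and \ref{finite} and the dimension count of Proposition \ref{taut}(3), and part (2) by comparing $\dim MO_X(2n+1;t)$ for $t<(n+1)(g-1)$ with $\dim MO_X(2n+1)^\pm = n(2n+1)(g-1)$ and invoking the bound of Proposition \ref{oddbound}; your treatment of (2) and of nonemptiness, generic finiteness and the dimension in (1) is essentially verbatim what the paper does.

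The one genuine gap is exactly the step you flag at the end: the inclusion $MO_X(2n+1;t)\subseteq\overline{\alpha_e(\A_e)}$, needed for irreducibility. You write ``every $V$ in the stratum has a maximal Lagrangian subbundle of degree $-e$ and so is represented in $\A_e$,'' but this is false as stated, since $\A_e$ is fibred only over (a cover of) $U_X^s(n,-e)$ and the maximal Lagrangian subbundle $E_0$ of an arbitrary $V_0$ in the stratum may fail to be stable (indeed it may even be unstable, not merely strictly semistable). Flagging the difficulty is not the same as resolving it, so as it stands the irreducibility claim is unproved. The paper closes this gap as follows: given such a $V_0$ with maximal Lagrangian subbundle $E_0$ of degree $-e$, one takes a versal deformation of $E_0$ and extracts a one-parameter family $\{E_\lambda\}$ through $E_0$ whose general member is stable; using the construction of Proposition \ref{taut} one then builds over this family a family of orthogonal bundles admitting the $E_\lambda$ as maximal Lagrangian subbundles and specializing to $V_0$. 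The general member of this family is represented in $\A_e$, so $V_0$ lies in the closure of $\alpha_e(\A_e)$. If you supply this deformation argument (or an equivalent one), the rest of your proof goes through.
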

\begin{proof}
(1) For $t = 2e$ in the range $0 < t < (n+1)(g-1)$, the stratum $MO_X(2n+1;t)$ is nonempty, by Proposition \ref{finite}. It contains the image $\alpha_e(\A_e)$, which is irreducible. To show the irreducibility of $MO_X(2n+1;t)$, we need to show that $\alpha_e(\A_e)$ is dense in $MO_X(2n+1;t)$. 
Any bundle $V_0 \in MO_X(2n+1;t)$ contains a maximal Lagrangian subbundle $E_0$ of degree $-e$, which might be unstable. Considering a versal deformation of $E$, we can find a one-parameter family $\{ {E}_\lambda \}$ containing $E_0$ with general $E_\lambda$ being stable. Along this family, we can build a family of orthogonal bundles admitting $E_\lambda$ as maximal Lagrangian subbundles, using the parameter space in Proposition \ref{taut}. Since a general orthogonal bundle in this family is 
represented in $\A_e$, the bundle $V_0 \in MO_X(2n+1)$ lies on the closure of $\alpha_e (\A_e)$, as required.

For $t = 2e < ({n+1}) (g-1)$, the map $\alpha_e$ is generically finite, by Proposition \ref{finite}. Therefore, $MO_X(2n+1;t)$ has the same dimension as $\A_e$. By the computation in Proposition \ref{taut} (3), this is $\frac{1}{2} n(3n+1)(g-1) + \frac{1}{2}nt$. 

(2) For $t < ({n+1})(g-1)$ we have $\dim MO_X(2n+1;t) < \dim MO_X(2n+1)$, while we know that $t(V) \le (n+1)(g-1) + 3$ by Proposition \ref{oddbound}. Therefore, the two strata corresponding to even numbers in the range $(n+1)(g-1) \le t \le (n+1)(g-1) + 3$ must be nonempty and dense in the components $MO_X(2n+1)^\pm$.
\end{proof}

\begin{remark} In particular, the last statement shows a general bundle represented in either of the corresponding parameter spaces $\A_e$ must be stable. \qed
\end{remark}

\begin{cor} \label{thm2}
For any orthogonal bundle $V$ of rank $2n+1$, we have $t(V) \le (n+1)(g-1)+3$. This bound is sharp in the sense that the two even numbers $t$ with $(n+1)(g-1) \le t \le (n+1)(g-1) + 3$ correspond to the values of $t(V)$ for a general $V \in MO_X(2n+1)^\pm$. 
\end{cor}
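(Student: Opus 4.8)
The plan is to assemble two facts that have already been established. The inequality $t(V) \le (n+1)(g-1)+3$ is exactly the statement of Proposition \ref{oddbound}, which is proved by applying the even-rank bound of Proposition \ref{evenbound} to $W = V \perp \ox$ and transporting the resulting Lagrangian subbundle back to $V$ via Proposition \ref{Lagsubbs}. Thus the first assertion of the corollary needs no new argument, and I would simply cite Proposition \ref{oddbound}.

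For the sharpness claim I would rely on Theorem \ref{thm3} (2), according to which every stratum $MO_X(2n+1;t)$ with $t$ even and $(n+1)(g-1) \le t \le (n+1)(g-1)+3$ is dense in one of the two components $MO_X(2n+1)^\pm$. The remaining task is a parity count. First, $t(V) = -2 \deg E$ for a maximal Lagrangian subbundle $E$, so $t(V)$ is automatically even; this is why only even values of $t$ enter the statement. Second, the interval of four consecutive integers with $(n+1)(g-1) \le t \le (n+1)(g-1)+3$ contains exactly two even integers, and these differ by $2$, so they are congruent to $0$ and $2$ modulo $4$ respectively.

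To pin down which even value belongs to which component, I would invoke Theorem \ref{thm1} (2): a bundle lies in $MO_X(2n+1)^+$ (resp.\ $MO_X(2n+1)^-$) precisely when its Lagrangian subbundles have even (resp.\ odd) degree, equivalently when $t(V) = -2\deg E \equiv 0$ (resp.\ $\equiv 2$) modulo $4$. Hence the even value in the range congruent to $0 \bmod 4$ is the generic value of $t$ on $MO_X(2n+1)^+$, and the one congruent to $2 \bmod 4$ is the generic value on $MO_X(2n+1)^-$; combined with the density statement of Theorem \ref{thm3} (2), this is exactly the asserted sharpness. Since all the substantive geometry is already contained in Theorem \ref{thm3}, I do not expect a genuine obstacle here: the only point deserving explicit care is this elementary parity bookkeeping, which distributes the two even values one to each component, and I would spell it out so that the correspondence with the signs $\pm$ is unambiguous.
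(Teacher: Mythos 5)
Your proposal is correct and matches the paper's own (essentially implicit) argument: the paper states the corollary immediately after Theorem \ref{thm3} with no separate proof, the bound being exactly Proposition \ref{oddbound} and the sharpness being exactly Theorem \ref{thm3} (2). Your additional parity bookkeeping via Theorem \ref{thm1} (2), assigning the value $t \equiv 0 \bmod 4$ to $MO_X(2n+1)^+$ and $t \equiv 2 \bmod 4$ to $MO_X(2n+1)^-$, is accurate and consistent with the tables the authors give in \S 6.
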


From the computation in Proposition \ref{finite} we now deduce a statement for generic orthogonal bundles of odd rank, analogous to Lange--Newstead \cite[Theorem 2.3]{LaNe} for vector bundles and \cite[Theorem 4.1 (3)]{CH3} for symplectic bundles:

\begin{cor} As before, write $\deg E = -e$. Suppose that $g \geq 5$ and
\[ \frac{(n+1)(g-1)}{2} \ \le \ e \ \leq \ \frac{(n+1)(g-1) + 3}{2}. \]
Then a general orthogonal extension $0 \to E \to V \to F^* \to 0$ admits no Lagrangian subbundle of degree $\ge -e$ intersecting $E$ in generically positive rank. \end{cor}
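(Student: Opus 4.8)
The plan is to rerun the dimension count from the proof of Proposition \ref{finite}, now over the wider range $\frac{1}{2}(n+1)(g-1) \le e \le \frac{1}{2}\left((n+1)(g-1)+3\right)$, but restricting attention to a second Lagrangian subbundle $\tE$ of degree $-\tilde e \ge -e$ whose intersection with $E$ has generically positive rank $r := \rank I \ge 1$. First I would note that $r \ge 1$ forces $n \ge 2$: for $n=1$ a rank-one intersection means that the line subbundles $E$ and $\tE$ share their generic fibre, hence coincide. As before I set $\tE^\perp = \tF$, let $H$ denote the locally free part of $F \cap \tF$, and recall from Lemma \ref{cap} that $\rank H = r+1$, so that $\rank(F/H) = n-r$.

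The case $1 \le r \le n-2$, in which $\rank(F/H) \ge 2$ and all three parts of Lemma \ref{liftingcrit} are available, proceeds exactly as in Proposition \ref{finite}. The estimates on $D_1, D_2, D_3, D_4$ and the manipulation using $-\deg I \le h \le e$ (from Corollary \ref{capcor} and Lemma \ref{liftingcrit} (2)) and $\tilde e \le e$ lead to the inequality (\ref{ineq}), whose left-hand side is bounded by $e(n-r)$; thus $S_1 < S_2$ follows once (\ref{mainineq}) holds, i.e.\ $e < \frac{1}{2}(n+r+1)(g-1)$. For $r \ge 1$ the right-hand side is at least $\frac{1}{2}(n+2)(g-1)$, and the hypotheses give
\[ e \ \le \ \frac{(n+1)(g-1)+3}{2} \ < \ \frac{(n+2)(g-1)}{2}, \]
the last inequality being equivalent to $3 < g-1$, that is, to $g \ge 5$. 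Hence in this range the locus of bad $V$ has positive codimension in the fibre $(\tau_e \circ \pi_e)^{-1}(\bar E)$.

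The remaining case $r = n-1$, where $\rank(F/H) = 1$ and the secant condition of Lemma \ref{liftingcrit} (3) is no longer meaningful, is the main obstacle and must be handled directly, as in the final paragraph of the proof of Proposition \ref{finite}. Here I would bound the bad locus using only the constraint on $j$ from Lemma \ref{liftingcrit} (1) together with the variation of $I$ in its Quot scheme, leaving $[V]$ unconstrained in $\Pi_j$; this only enlarges the estimate, so it is harmless. Since $E/I$ is now a line bundle, $\pp(E/I) \cong X$ and $\dim \Sec^d \pp(E/I) \le 2d-1$ with $d = \deg I + h$. Combining this with $h^1(X,I) = -\deg I + (n-1)(g-1)$ and the very-stability of the general $E$ (Laumon \cite{Lau}, Lange--Newstead \cite{LaNe}), which gives $D_4 = -n\deg I - (n-1)e - (n-1)(g-1)$, I expect
\[ D_1 + D_4 \ \le \ -(n-1)\deg I + 2h - (n-1)e \ \le \ 2e, \]
the last step again using $-\deg I \le h \le e$. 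It then suffices to verify $2e < e + n(g-1) = \dim H^1(X,E)$, i.e.\ $e < n(g-1)$; and indeed $\frac{1}{2}\left((n+1)(g-1)+3\right) < n(g-1)$ is equivalent to $3 < (n-1)(g-1)$, which holds for every $n \ge 2$ once $g \ge 5$. So the bad locus is again of positive codimension.

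Combining the two cases shows that a general orthogonal extension $0 \to E \to V \to F^* \to 0$ has no Lagrangian subbundle of degree $\ge -e$ meeting $E$ in generically positive rank. The two points requiring care are the verification that $g \ge 5$ is exactly the threshold making both numerical inequalities strict, and the degenerate case $r = n-1$: there one must notice that, after discarding the now-meaningless constraint on $[V]$, counting only the variation of $j$ and of $I$ still produces positive codimension. This is the odd-rank analogue of the phenomenon noted in the Remark following Proposition \ref{quote}.
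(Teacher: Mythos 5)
Your proposal is correct and takes essentially the same route as the paper: the paper's own proof simply notes that for $r \ge 1$ the right-hand side of (\ref{mainineq}) is at least $\tfrac{1}{2}(n+2)(g-1)$, and that the hypothesis $e \le \tfrac{1}{2}\left((n+1)(g-1)+3\right)$ falls below this threshold precisely when $g \ge 5$. Your explicit treatment of the degenerate case $r = n-1$ (where Lemma \ref{liftingcrit} (3) is unavailable), via the bound $D_1 + D_4 \le 2e < e + n(g-1) = h^1(X,E)$, is sound and is actually more careful than the paper's one-line argument, which leaves that case implicit.
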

\begin{proof}
We must exclude the lifting of an $\tE$ of degree $\ge -e$ for which $I = E \cap \tE$ has rank $r \geq 1$. By (\ref{mainineq}), this would follow from
\[ \frac{(n+1)(g-1) + 3}{2} \ < \ \frac{(n+2)(g-1)}{2}, \]
which is satisfied for all $g \ge 5$. \end{proof}

\begin{remark} \label{Hirsch}
According to Hirschowitz \cite{Hir} (see also \cite{CH1}), every vector bundle $V$ of rank $2n+1$ and degree $0$ has a subbundle of rank $n$ and degree $-f$, where
\begin{equation} \label{hir}
f \ \le \ \left\lceil \frac{n(n+1)(g-1)}{2n+1} \right\rceil. 
\end{equation}
By direct computation, one can check that for $n \ge 1$ and $g \ge 2$, the bound in (\ref{hir}) is strictly smaller than $\frac{1}{2} t(V)$ for a general $V \in MO_X(2n+1)$, except in the following cases:
\begin{enumerate}
\renewcommand{\labelenumi}{(\roman{enumi})}
\item $g=2$, $n$ is odd and $t(V) = n+1$
\item $g=2$, $n$ is even and $t(V) = n+2$
\item $g=3$, $t(V) = 2(n+1)$
\item $g=4$, $n$ is odd and $t(V) = 3(n+1)$
\end{enumerate}
As mentioned in the introduction, this implies apart from in these cases, a general orthogonal bundle of rank $2n+1$ has the property that no maximal subbundle of rank $n$ is Lagrangian. \qed
\end{remark}

\noindent We conclude this section with a result on the topology of the strata. We recall the following statement \cite[Theorem 1.3 (2)]{CH3} for orthogonal bundles of rank $2n+2$:
\begin{prop} For each $t < (g-1)(n+1)$, the stratum $MO_X (2n+2; t)$ is contained in the closure of $MO_X (2n+2;t+4)$. \qed \label{closureevenrank} \end{prop}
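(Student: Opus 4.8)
The plan is to show that a general member of the irreducible stratum $MO_X(2n+2;t)$ arises as the flat limit of a one-parameter family of bundles lying in $MO_X(2n+2;t+4)$; since $\overline{MO_X(2n+2;t+4)}$ is closed and $MO_X(2n+2;t)$ is irreducible (by the even-rank analogue of Theorem \ref{thm3}), it suffices to treat a single general $V$. Writing $t = 2e$, such a $V$ carries a unique maximal Lagrangian subbundle $E$ of rank $n+1$ and degree $-e$, and is recovered as an orthogonal extension $0 \to E \to V \to E^* \to 0$ with general class $[V] \in H^1(X, \wedge^2 E)$, by the even-rank analogue of Proposition \ref{finite} together with Proposition \ref{evenrankcrit}. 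Note that a soft argument (placing $V$ inside the image of the degree-$(e+2)$ parameter space) is \emph{not} available here: a secant estimate via Proposition \ref{quote}(2) shows that for $g \geq 3$ a general such $V$ has no Lagrangian subbundle of degree $-e-2$ at all, so a genuine degeneration is required.

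The first step is to manufacture a Lagrangian subbundle of degree $-e-2$ in a nearby bundle. Choose a general effective divisor $D$ of degree $2$ and a general length-$2$ quotient $E \twoheadrightarrow T$ supported on $D$, producing a Lagrangian subsheaf $E' = \ker(E \to T)$ of degree $-e-2$; dualizing gives $E^* \hookrightarrow (E')^*$ with torsion cokernel. I would then build a flat family $\{V_\lambda\}_{\lambda \in \Delta}$ of orthogonal bundles of rank $2n+2$ with $V_0 \cong V$ such that, for $\lambda \neq 0$, the sheaf $E'$ is \emph{saturated} in $V_\lambda$, i.e. $V_\lambda$ is an orthogonal extension $0 \to E'_\lambda \to V_\lambda \to (E'_\lambda)^* \to 0$ with $E'_\lambda$ a genuine Lagrangian subbundle of degree $-e-2$ specializing to $E' \subset V_0 = V$. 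Concretely this family is the elementary (Hecke) transformation of $V$ along $D$ which, on the sub-side, turns $E$ into $E'$ and, on the quotient-side, enlarges $E^*$ to $(E')^*$, chosen so as to preserve $\det = \ox$ and the symmetric form. Since the modification has even length, Proposition \ref{parityeven} keeps every $V_\lambda$ in the same component, and the central fibre is $V$ because the modification degenerates to the identity at $\lambda = 0$.

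The decisive point is to verify that the general $V_\lambda$ has Segre invariant \emph{exactly} $t+4$, equivalently that it has no Lagrangian subbundle of degree $> -e-2$. This has two aspects. On the one hand the old maximal subbundle $E$ must fail to survive: because $E$ is the unique (hence rigid) maximal Lagrangian of $V$, a general transverse modification should destroy it, and this must be made precise by a first-order computation showing that no lift of $E$ to a Lagrangian subbundle of degree $\geq -e-2$ persists in $V_\lambda$. On the other hand, no \emph{new} Lagrangian of degree $> -e-2$ may appear; I would exclude this with the secant criterion of Proposition \ref{quote}(2), estimating the secant varieties $\Sec^k \Gr(2, E'_\lambda / H)$ exactly as in the dimension count of Proposition \ref{finite}, which forces the offending loci into positive codimension as long as $t < (n+1)(g-1)$. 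Combining the two aspects gives $V_\lambda \in MO_X(2n+2;t+4)$ for general $\lambda$, whence $V = V_0 \in \overline{MO_X(2n+2;t+4)}$.

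The main obstacle is the construction of the family $\{V_\lambda\}$ with simultaneous control of its flatness, of its central fibre, and of its transversality to $E$: one must exhibit an orthogonal elementary transformation along $D$ whose general fibre saturates $E'$ to a Lagrangian subbundle of degree $-e-2$, whose flat limit as $\lambda \to 0$ is exactly $V$ rather than some other modification, and which is transverse to $E$ in the precise sense that the degree-$(-e)$ subbundle does not reappear in the general fibre. Once this family and its transversality are established, the remaining ingredients — the parity and degree bookkeeping through Proposition \ref{parityeven}, and the genericity estimate through Proposition \ref{quote} — are routine variants of the computations already carried out in the odd-rank setting of Proposition \ref{finite}.
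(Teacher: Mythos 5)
First, a point of comparison: the paper does not prove this proposition at all. It is quoted verbatim from \cite[Theorem 1.3 (2)]{CH3} (note the \qed in the statement) and is used purely as an input, to deduce the odd-rank closure relation via the embedding $\Psi(V) = V \perp \ox$. So there is no in-paper argument to measure yours against; your attempt has to stand on its own.

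On its own terms, your proposal has the right overall shape --- reduce to a general $V$ in the irreducible stratum, observe correctly that such a $V$ need not carry any saturated Lagrangian subbundle of degree $-e-2$, and therefore try to exhibit $V$ as a flat limit of Hecke modifications that do --- but the step carrying all the content is exactly the one you defer. ``The elementary (Hecke) transformation of $V$ along $D$'' is not a one-parameter family with central fibre $V$: for fixed $D$ and fixed kernel data it is a single bundle $W$, which sits together with $V$ inside a common overbundle $W''$ (the pushout of $V$ along $E' \hookrightarrow E$) as two colength-$2$ subsheaves. To produce a degeneration you must let the length-$2$ quotient $W'' \twoheadrightarrow W''/W$ vary in the appropriate (orthogonal, determinant-preserving) Quot scheme and use its properness and irreducibility to see that the quotient cutting out $V$ lies in the closure of those cutting out the various $W$; equivalently, you must work in the fibre of the surjection $H^1(X, \wedge^2 E') \twoheadrightarrow H^1(X, \wedge^2 E)$ over $[V]$. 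None of this is set up, and your final paragraph concedes as much. Moreover, once the family is built, the genericity step is not the ``routine variant'' you claim: the Hecke modifications of the \emph{fixed} $V$ sweep out only a roughly $2n$-dimensional locus inside $\pp H^1(X, \wedge^2 E')$ (the fibre over $[V]$ has dimension $2n$), whereas the secant-variety counts of Proposition \ref{quote}(2) and Proposition \ref{finite} only show that the locus of classes admitting a Lagrangian subbundle of degree $\ge -e$ has \emph{positive} codimension in the whole extension space --- a codimension that shrinks to order $n$ as $t$ approaches $(n+1)(g-1)$. A dimension count therefore does not exclude the possibility that your entire family lands inside the bad locus, i.e.\ that every $V_\lambda$ still satisfies $t(V_\lambda) \le t$; you need a genuinely relative transversality argument, or you must let $E'$, $D$ and $V$ all move and redo the count for the total incidence variety. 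One simplification you are entitled to: the ``first-order computation showing that $E$ does not persist'' is unnecessary, since if $E'_\lambda$ is saturated in $V_\lambda$ with locally free quotient $(E'_\lambda)^*$, then no subsheaf of $V_\lambda$ can contain $E'_\lambda$ with torsion quotient, so the old maximal subbundle is automatically destroyed; the only danger comes from Lagrangian subbundles not containing $E'_\lambda$, which is precisely where the relative genericity issue above bites.
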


Using Proposition \ref{Lagsubbs}, we will now deduce an analogous statement for orthogonal bundles of odd rank. We define a map $\Psi \colon MO_X (2n+1) \to MO_X (2n+2)$ by $\Psi (V) = V \perp \ox$. This is clearly an injective morphism, and $\Psi(V)$ is a stable orthogonal bundle if $V$ is stable.

\begin{theorem} For each $t < (g-1)(n+1)$, the stratum $MO_X (2n+1;t)$ is contained in the closure of $MO_X (2n+1; t+4)$. \end{theorem}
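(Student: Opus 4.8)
The theorem asserts that for each even $t < (g-1)(n+1)$, the stratum $MO_X(2n+1;t)$ lies in the closure of $MO_X(2n+1;t+4)$. The natural strategy is to transport the analogous even-rank statement (Proposition \ref{closureevenrank}) across the map $\Psi(V) = V \perp \ox$, using the correspondence between Lagrangian subbundles of $V$ and of $W = V \perp \ox$ established in Proposition \ref{Lagsubbs}.

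Let me think about whether $\Psi$ interacts well with the Segre invariant $t$.

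The key fact is that by Proposition \ref{Lagsubbs}(2), every Lagrangian subbundle $G$ of $W = V\perp\ox$ satisfies $\deg(G\cap V) = \deg G$, and $G\cap V$ is Lagrangian in $V$; conversely Proposition \ref{Lagsubbs}(3) gives a surjection, so the maximal degrees of Lagrangian subbundles of $V$ and of $W$ coincide. Hence $t(\Psi(V)) = t(V)$. So $\Psi$ maps $MO_X(2n+1;t)$ into $MO_X(2n+2;t)$, and moreover into that stratum exactly (not just into the union of strata with the same invariant). Let me verify: if $E$ is maximal Lagrangian in $V$ of degree $-e$, the construction in Proposition \ref{Lagsubbs}(1) produces a Lagrangian $F'\subset W$ with $\deg F' = \deg E$, so $t(W)\le t(V)$; conversely any Lagrangian of $W$ restricts to one of $V$ of equal degree, giving $t(V)\le t(W)$. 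Thus $t(\Psi(V))=t(V)$ holds.

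**The plan.** First I would establish the identity $t(\Psi(V)) = t(V)$ as above, so that $\Psi$ restricts to an injective morphism $MO_X(2n+1;t) \to MO_X(2n+2;t)$ for every even $t$. Next, given $V_0 \in MO_X(2n+1;t)$ with $t < (g-1)(n+1)$, I would apply Proposition \ref{closureevenrank} to $W_0 = \Psi(V_0) \in MO_X(2n+2;t)$: this places $W_0$ in the closure of $MO_X(2n+2;t+4)$, so there is a family $\{W_\lambda\}$ in $MO_X(2n+2)$ with $W_\lambda \in MO_X(2n+2;t+4)$ for general $\lambda$ and $W_0$ as the special fiber. The goal is then to show that this family can be taken inside the image $\Psi(MO_X(2n+1))$, equivalently that the general $W_\lambda$ splits off a trivial orthogonal summand $\ox$; restricting the corresponding $V_\lambda = $ (orthogonal complement of $\ox$ in $W_\lambda$) would then give a family in $MO_X(2n+1)$ with $V_0$ special and general member in $MO_X(2n+1;t+4)$, as required.

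**The main obstacle.** The hard part is the last step: a general deformation of $W_0 = V_0\perp\ox$ inside the even-rank stratum $MO_X(2n+2;t+4)$ need not itself be of the form $V\perp\ox$, i.e. need not split off a trivial orthogonal line bundle. To control this I would argue at the level of parameter spaces rather than abstract deformations. By the even-rank analogue of the construction in \S\ref{extlift}, bundles in $MO_X(2n+2;t+4)$ arise as orthogonal extensions $0\to F\to W\to F^*\to 0$ with $F$ a Lagrangian subbundle of degree $-(e+2)$, where $t+4 = 2(e+2)$. The condition that $W$ split as $V\perp\ox$ is equivalent (via Lemma \ref{notisom} and Remark \ref{degenerate}, read in the even/odd correspondence) to the extension class lying in the distinguished subspace $H^1(X,\wedge^2 E)\subset \Pi_j$ coming from a sub-line-bundle $\ox\subset F$. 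Rather than deform $W_0$ directly, I would instead deform $V_0$ within the odd-rank parameter space $\A_{e+2}$ of Proposition \ref{taut}: by irreducibility of $MO_X(2n+1;t+4)$ (Theorem \ref{thm3}(1)) combined with the degeneration of the maximal Lagrangian degree, I would produce a one-parameter family $\{V_\lambda\}$ of orthogonal bundles with $V_0$ as limit and $t(V_\lambda) = t+4$ for general $\lambda$, exactly as in the proof of irreducibility in Theorem \ref{thm3}(1). This reduces the problem to an odd-rank semicontinuity argument and avoids the splitting difficulty entirely; the technical point requiring care is verifying that such a degeneration of the maximal Lagrangian subbundle (raising its degree by $1$, hence $t$ by $2$ twice, landing on $t+4$ rather than $t+2$, consistent with the parity constraint of Theorem \ref{thm1}) can be realized within a flat family of stable orthogonal bundles, which I expect to follow from the same deformation-theoretic input used in Proposition \ref{closureevenrank}.
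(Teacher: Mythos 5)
Your setup follows the paper's route exactly: establish $t(\Psi(V))=t(V)$ via Proposition \ref{Lagsubbs}, so that $MO_X(2n+1;t)=\Psi(MO_X(2n+1))\cap MO_X(2n+2;t)$, and then try to pull the even-rank closure statement (Proposition \ref{closureevenrank}) back through $\Psi$. You are also right to flag the subtlety that the paper's one-line deduction passes over: knowing $MO_X(2n+2;t)\subseteq\overline{MO_X(2n+2;t+4)}$ does not formally give $Z\cap MO_X(2n+2;t)\subseteq\overline{Z\cap MO_X(2n+2;t+4)}$ for $Z=\Psi(MO_X(2n+1))$, since the approximating family of $W_0=V_0\perp\ox$ need not stay in the image of $\Psi$. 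This is a legitimate observation about the argument.

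The problem is that your proposed repair does not close the gap: it restates the theorem rather than proving it. Saying you will ``produce a one-parameter family $\{V_\lambda\}$ with $V_0$ as limit and $t(V_\lambda)=t+4$ for general $\lambda$'' is precisely the assertion $V_0\in\overline{MO_X(2n+1;t+4)}$, and neither the irreducibility proof of Theorem \ref{thm3}(1) nor ``semicontinuity'' supplies it. The irreducibility argument only shows that a bundle with $t(V_0)=t$ is a limit of bundles in the \emph{same} stratum $\A_e$; semicontinuity of $t$ gives the easy containment $\overline{MO_X(2n+1;t+4)}\subseteq\bigcup_{t'\le t+4}MO_X(2n+1;t')$, which is the wrong direction. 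To make your alternative work you would need the one concrete input you never establish: that $V_0$ admits a Lagrangian subbundle of degree $-(e+2)$ (e.g.\ by performing a Lagrangian elementary transformation on a maximal Lagrangian of $V_0\perp\ox$ and intersecting with $V_0$ via Proposition \ref{Lagsubbs}), so that $V_0$ lies in the closure of $\alpha_{e+2}(\A_{e+2})$, whose general point has invariant $t+4$ by Proposition \ref{finite}. As written, the crucial degeneration is deferred to ``the same deformation-theoretic input used in Proposition \ref{closureevenrank}'' without explaining how that input transports to the odd-rank side, so the proof is not complete.
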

\begin{proof} Let $V \in MO_X (2n+1)$. By Proposition \ref{Lagsubbs} (3) we see that $V$ admits a maximal Lagrangian subbundle of degree $-e$ if and only if $V \perp \ox$ admits a maximal Lagrangian subbundle of degree $-e$. Therefore,
\[ MO_X (2n + 1;t) \ \cong \ \Psi \left( MO_X (2n + 1) \right) \ \cap \ MO(2n + 2; t) \]
for each $t$. Since all the spaces under consideration are constructible sets, we deduce from Proposition \ref{closureevenrank} that the stratum $MO_X (2n+1;t)$ is contained in the closure of $MO_X (2n+1;t+4)$ for each $t < (n+1)(g-1)$, as required. \end{proof}
%

\section{Maximal Lagrangian subbundles}

By Proposition \ref{finite}, the points of a general fiber $\alpha_e^{-1} (V)$ in $\A_e$ correspond to the maximal Lagrangian subbundles $ E \subset V$ which are stable as vector bundles. For $t = 2e < (n+1)(g-1)$, we already know by Proposition \ref{finite} (1) that a general $V \in MO_X(2n+1;t)$ has a unique maximal Lagrangian subbundle of degree $-e$. In this section we will compute the dimension of the space $M(V)$ of maximal Lagrangian subbundles for a general $V \in MO_X(2n+1)$ with $t(V) \ge (n+1)(g-1)$. We first observe:
\begin{prop} \label{max}
Suppose $E \subset V$ is a Lagrangian subbundle. 
Then the tangent space of $M(V)$ at $E$ is isomorphic to $H^0(X, \wedge^2 (E^\perp)^*)$.
\end{prop}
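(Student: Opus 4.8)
The plan is to compute $T_E M(V)$ as the space of first-order deformations of $E$ inside $V$ that preserve the Lagrangian condition. First I would observe that, since the degree of a subbundle is locally constant in families and maximality (the condition $t(V)=-2\deg E$) is therefore open, the space $M(V)$ agrees near $E$ with the scheme of all Lagrangian subbundles of $V$ of degree $\deg E$; so it suffices to compute the tangent space to the latter. Recall that the tangent space at $E$ to the scheme of rank-$n$ subbundles of $V$ is $H^0(X, \Hom(E, V/E))$, the space of global sections of the normal bundle. Using the nondegenerate form $\omega$ I would identify $V/E \cong (E^\perp)^*$ (the annihilator of $E$ in $V^* \cong V$ being exactly $E^\perp$), and recall from Lemma \ref{Morth} that $E^\perp/E \cong \ox$, so that dualizing $0 \to E \to E^\perp \to \ox \to 0$ gives $0 \to \ox \to (E^\perp)^* \xrightarrow{r} E^* \to 0$.

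Next I would pin down the isotropy condition. Given $\phi \in H^0(X,\Hom(E,V/E))$, choose a local lift $\tilde\phi \colon E \to V$ of $\phi$; the deformed subbundle $\{e + t\tilde\phi(e)\}$ is isotropic to first order precisely when $\omega(\tilde\phi(e),e') + \omega(e,\tilde\phi(e')) = 0$ for all local sections $e,e'$ of $E$. Writing $B_\phi(e,e') := \omega(\tilde\phi(e),e')$, this says exactly that $B_\phi$ is alternating. I would then recognize $B_\phi$ as the image of $\phi$ under the map $\Hom(E,V/E) = E^* \otimes (E^\perp)^* \xrightarrow{\mathrm{id}\otimes r} E^* \otimes E^* = \otimes^2 E^*$: indeed, under $V/E\cong (E^\perp)^*$ the section $\phi(e)$ is the functional $w \mapsto \omega(\tilde\phi(e),w)$ on $E^\perp$, whose restriction to $E$ is $r(\phi(e))$. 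Consequently the subsheaf $\mathcal N$ of infinitesimal isotropic deformations is the preimage of $\wedge^2 E^* \subset \otimes^2 E^*$, and $T_E M(V) = H^0(X,\mathcal N)$.

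The heart of the argument is to identify $\mathcal N$ with $\wedge^2(E^\perp)^*$. I would define a natural map $\Phi \colon \wedge^2(E^\perp)^* \hookrightarrow (E^\perp)^* \otimes (E^\perp)^* \xrightarrow{r\otimes\mathrm{id}} E^*\otimes(E^\perp)^* = \Hom(E,V/E)$. Composing $\Phi$ with $\mathrm{id}\otimes r$ yields $r\otimes r$ applied to an alternating tensor, which lands in $\wedge^2 E^*$; hence $\Phi$ factors through $\mathcal N$. To see that $\Phi \colon \wedge^2(E^\perp)^* \to \mathcal N$ is an isomorphism I would compare the two natural filtrations: the sequence $0 \to \ox \to (E^\perp)^* \xrightarrow{r} E^* \to 0$ induces $0 \to E^* \to \wedge^2(E^\perp)^* \to \wedge^2 E^* \to 0$, while the definition of $\mathcal N$ gives $0 \to E^* \to \mathcal N \to \wedge^2 E^* \to 0$ (the subsheaf $E^*$ being $E^*\otimes\ker r$). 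A direct local computation in a basis $s, v_1, \dots, v_n$ of $(E^\perp)^*$ adapted to $r$ shows that $\Phi$ induces isomorphisms on both associated graded pieces, hence is an isomorphism. This yields $T_E M(V) \cong H^0(X, \wedge^2(E^\perp)^*)$, globalizing the fibrewise identity $T_{\mathsf E}\OG(n,2n+1)\cong\wedge^2(\mathsf E^\perp)^*$ of Lemma \ref{OG}(4).

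I expect the main obstacle to be this last identification, specifically verifying that $\Phi$ is genuinely an isomorphism rather than an inclusion of locally free sheaves of equal rank with torsion cokernel; this requires careful bookkeeping of the signs and of the $\ox$-twist in the splitting on associated graded. A secondary point requiring care is the deformation-theoretic claim that the Zariski tangent space to the Lagrangian subbundle scheme is exactly $H^0(X,\mathcal N)$, i.e.\ that imposing the closed isotropy condition on the subbundle scheme cuts out precisely the global sections of the subsheaf $\mathcal N$.
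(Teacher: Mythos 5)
Your proof is correct, but it follows a genuinely different route from the paper's. The paper views a Lagrangian subbundle as a section $\sigma \colon X \to \OG(n,V)$ of the orthogonal Grassmannian bundle, identifies the tangent space to $M(V)$ at $E$ with $H^0$ of the pullback of the relative tangent bundle, and then computes the fibres via Lemma \ref{OG}(4), i.e.\ by passing through the canonical isomorphism $\OG(n,2n+1) \cong \OG(n+1,2n+2)_i$ and the Lagrangian subbundle $F' \subset W = V \perp \ox$ corresponding to $E^\perp$. You instead work directly with the normal bundle $\Hom(E, V/E) \cong E^* \otimes (E^\perp)^*$, impose the first-order isotropy condition to cut out the subsheaf $\mathcal N$ (the preimage of $\wedge^2 E^*$ under $\mathrm{id}\otimes r$), and identify $\mathcal N \cong \wedge^2 (E^\perp)^*$ by an explicit map $\Phi$ together with a comparison of the two filtrations induced by $0 \to \ox \to (E^\perp)^* \to E^* \to 0$; your rank and associated-graded checks are right, and the argument that $B_\phi$ is well defined and alternating is the correct linearization of the isotropy condition. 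What your approach buys is self-containedness and an explicit \emph{global} sheaf isomorphism: the paper's one-line passage from the fibrewise identity $T_{E_x}\OG(n,V_x) \cong \wedge^2(F'_x)^*$ to the bundle identification $T \cong \wedge^2(F')^*$ is exactly the gluing that your filtration argument verifies. What the paper's approach buys is brevity and the conceptual link to the even-rank picture via $W = V \perp \ox$, which it reuses elsewhere. The two secondary points you flag (that the tangent space of the Lagrangian-subbundle scheme is $H^0(X,\mathcal N)$, and that maximality is open so $M(V)$ agrees locally with that scheme) are both standard and handled correctly.
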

\begin{proof}
To give a Lagrangian subbundle $E \subset V$ is equivalent to giving a global section $\sigma \colon X \to \OG(n, V)$ of the orthogonal Grassmannian bundle $\OG(n, V)$ over $X$. A tangent vector to $M(V)$ at $E$, corresponding to an infinitesimal deformation of $E$ in $M(V)$, is equivalent to a global section of the pullback by $\sigma$ of the tangent bundle $T_{\OG(n, V)}$. This is the bundle $T \to X$ with $T_x \cong T_{E_x}\OG(n, V_x)$. By Lemma \ref{OG} (4), we have
\[
T_{E_x}\OG(n, V_x) \ \cong \ T_{F'_x} \OG(n+1, W_x) \ \cong \ \wedge^2 \left( F'_x \right)^* ,
\]
where $W \cong V \perp \ox$ and $F' \subset W$ is a Lagrangian subbundle isomorphic as a vector bundle to $E^\perp \subset V$. Therefore, the bundle $T \to X$ can be identified with $\wedge^2 (F')^*$. \end{proof}

By the proposition, the dimension of a general fiber $\alpha_e^{-1}(V)$ is given by $h^0(X, \wedge^2 F^*)$, where $F$ is general in $S_e \subset U_X(n+1, -e)$.  

\begin{lem} \label{h0F*}
Let $F$ be a general bundle in $S_e \subset U_X(n+1, -e)$. Then
\[
h^0(\wedge^2 F^*) = 
\begin{cases}
0 & \text{ if } e \le \frac{1}{2}(n+1)(g-1), \\
ne - \frac{1}{2} n(n+1)(g-1) & \text{ if } e > \frac{1}{2}(n+1)(g-1).
\end{cases}
\]
\end{lem}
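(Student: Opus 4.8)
The plan is to compute $h^0(X, \wedge^2 F^*)$ via a Riemann–Roch and vanishing argument, where $F$ is a general bundle in the Brill–Noether locus $S_e$. The essential invariant to control is $\chi(\wedge^2 F^*)$, which depends only on the rank and degree, so I would start by computing it. We have $\rank(\wedge^2 F^*) = \binom{n+1}{2} = \frac{1}{2}n(n+1)$ and $\deg(\wedge^2 F^*) = ne$ (since $\deg F^* = e$ and $\wedge^2 F^*$ has degree $(\rank F^* - 1)\deg F^* = n \cdot e$). By Riemann–Roch,
\[
\chi(\wedge^2 F^*) \ = \ ne \ - \ \frac{1}{2}n(n+1)(g-1).
\]
This already matches the claimed value of $h^0$ in the second case, so the content of the lemma is precisely the two vanishing statements: $h^0(\wedge^2 F^*) = 0$ when $e \le \frac{1}{2}(n+1)(g-1)$, and $h^1(\wedge^2 F^*) = 0$ when $e > \frac{1}{2}(n+1)(g-1)$.

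For the first regime $e \le \frac{1}{2}(n+1)(g-1)$, I would argue that a general $F \in S_e$ has $\wedge^2 F^*$ with no sections. The natural approach is to reduce to a genericity/stability statement: if $F$ is sufficiently general, then $\wedge^2 F^*$ is semistable of slope $\frac{ne}{\frac{1}{2}n(n+1)} = \frac{2e}{n+1} \le g-1$, and a general semistable bundle of slope $\le g-1$ has no sections. The subtlety is that $F$ is not a general bundle in $U_X(n+1,-e)$ but lies in the special locus $S_e$ (it has $h^0(F^*) = 1$), so I cannot directly invoke genericity in the full moduli space. Instead I would use the explicit description of $F$ as an extension $0 \to E \to F \to \ox \to 0$ with $E$ general in $U_X(n,-e)$, as established in Lemma \ref{Fstable}. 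Dualizing and taking $\wedge^2$, there is a filtration relating $\wedge^2 F^*$ to $\wedge^2 E^*$ and $E^*$ via the exact sequence $0 \to \wedge^2 E^* \to \wedge^2 F^* \to E^* \to 0$ (using $\wedge^2 F^*/\wedge^2 E^* \cong E^*$ as noted before diagram (\ref{twotwo})). Taking cohomology reduces the vanishing of $h^0(\wedge^2 F^*)$ to $h^0(\wedge^2 E^*) = 0$ and $h^0(E^*) = 0$ for general $E$; the latter was already shown in the proof of Lemma \ref{Fstable} under the slope bound $\mu(E^*) = e/n \le \frac{n+1}{2n}(g-1)$, and the former follows from semistability of $\wedge^2 E^*$ for general stable $E$ combined with its slope $\frac{2e}{n} \cdot \frac{1}{?}$— here I must check the slope of $\wedge^2 E^*$ is $\le g-1$, which holds since $\mu(\wedge^2 E^*) = \frac{2}{n}\cdot\frac{ne}{?}$ reduces to $\frac{2e}{n}$ adjusted by rank, and the hypothesis controls it.

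For the second regime $e > \frac{1}{2}(n+1)(g-1)$, I would prove the dual vanishing $h^1(\wedge^2 F^*) = h^0(K_X \otimes \wedge^2 F) = 0$. By Serre duality this is equivalent to showing $K_X \otimes \wedge^2 F$ has no sections, i.e. that $\wedge^2 F$ (of slope $-\frac{2e}{n+1} < -(g-1)$) stays negative enough after twisting by $K_X$. Again using the extension structure and the sequence $0 \to \wedge^2 E \to \wedge^2 F \to E \to 0$, I would twist by $K_X$ and reduce to $h^0(K_X \otimes \wedge^2 E) = 0$ and $h^0(K_X \otimes E) = 0$ for general $E$, each following from the slope bounds under the hypothesis $e > \frac{1}{2}(n+1)(g-1)$ combined with semistability.

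The main obstacle I anticipate is justifying the vanishing of $H^0$ and $H^1$ of $\wedge^2 E^*$ (and $\wedge^2 E$) for a \emph{general stable} $E$, rather than merely verifying a slope inequality. A general stable bundle need not have semistable exterior square, so the cleanest route is likely to invoke a known genericity result: for general $E$, the bundle $\wedge^2 E$ is itself semistable (or at least has the expected cohomology), which follows from results on the cohomology of exterior powers of generic bundles. Alternatively, since the statement is stated for general $F \in S_e$ and $h^0(\wedge^2 F^*)$ is upper-semicontinuous, it suffices to exhibit a single $F \in S_e$ realizing the claimed value; one can then construct such an $F$ explicitly as a generic extension and compute its cohomology directly via the filtration above, which sidesteps the need for a full semistability statement on the exterior square. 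I would pursue this semicontinuity approach as the safest path, verifying the two boundary cases by induction on $n$ or by direct reduction through the exact sequence, with the slope hypotheses exactly matching the stated thresholds.
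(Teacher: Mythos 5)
Your overall skeleton is right --- compute $\chi(\wedge^2 F^*) = ne - \tfrac{1}{2}n(n+1)(g-1)$, reduce the lemma to two vanishing statements, use irreducibility of $S_e$ plus semicontinuity to reduce to exhibiting a single good $F$, and use Serre duality for the second regime. This matches the strategy of the paper. But the central reduction you propose, passing through the extension $0 \to E \to F \to \ox \to 0$ and the induced sequence relating $\wedge^2 F^*$ to $E^*$ and $\wedge^2 E^*$, fails numerically exactly in the range that matters. The correct sequence is $0 \to E^* \to \wedge^2 F^* \to \wedge^2 E^* \to 0$, and while $h^0(E^*)=0$ holds for general $E$, one has $\mu(\wedge^2 E^*) = \tfrac{2e}{n}$, which \emph{exceeds} $g-1$ whenever $e > \tfrac{n}{2}(g-1)$; in particular at the boundary $e = \tfrac{1}{2}(n+1)(g-1)$ one computes $\chi(\wedge^2 E^*) = \tfrac{1}{2}(n-1)(g-1) > 0$ for $n \ge 2$, so $h^0(\wedge^2 E^*) > 0$ unavoidably. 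Hence the vanishing of $h^0(\wedge^2 F^*)$ cannot be obtained by making both pieces vanish; it would require proving that the connecting map $H^0(\wedge^2 E^*) \to H^1(E^*)$ is injective for general $j$, a nontrivial step you do not supply. The same numerical failure occurs in your second regime: $h^0(K_X \otimes E) = 0$ needs $e \ge n(g-1)$, which is strictly stronger than the hypothesis $e > \tfrac{1}{2}(n+1)(g-1)$ for $n \ge 2$.

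The paper circumvents both problems. For $e \le \tfrac{1}{2}(n+1)(g-1)$ it does use semicontinuity on the irreducible locus $S_e$, but the witness it exhibits is not a generic extension of $\ox$ by $E$: it is a polystable bundle $F^* = \tilde{G} \oplus \tilde{H}$ with $\tilde{G}$ a twist of a general rank-$2$ bundle by (the inverse of) a maximal line subbundle and $\tilde{H}$ a compensating twist, so that $\wedge^2 F^* \cong \det\tilde{G} \oplus \wedge^2\tilde{H} \oplus (\tilde{G}\otimes\tilde{H})$ and each summand has vanishing $h^0$ by Hirschowitz-type genericity and the identification $\det G \otimes M^{-2} \cong \Hom(M, G/M)$, while $h^0(\tilde{G}) > 0$ keeps $F$ in $S_e$. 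For $e > \tfrac{1}{2}(n+1)(g-1)$ it applies Serre duality and observes that $F \otimes \kappa$ lands back in the already-proven range, rather than re-running the extension argument. To repair your proof you would either need to control the connecting homomorphism in your long exact sequence (plausible, since $h^0(\wedge^2 E^*)$ and $h^1(E^*)$ have the same expected dimension at the boundary, but this requires a genuine argument about the generic extension class), or replace your witness by an explicit split construction as the paper does.
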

\begin{proof}
Firstly, suppose that $e \le \frac{1}{2}(n+1)(g-1)$, so that $\mu(\wedge^2 F^*) \le g-1$. If $F$ were general in $U_X(n+1, -e)$, then we would have $h^0(X, \wedge^2 F^*) = 0$ by the variant \cite[Lemma A.1]{CH2} of Hirschowitz' lemma. However, in this case $F$ belongs to the locus $S_e \subset U_X(n+1, -e)$. Since $S_e$ is irreducible, it suffices to find one $F \in S_e$ satisfying $h^0(X, \wedge^2 F^*) = 0$. We will do this for $e = \frac{1}{2}(n+1)(g-1)$, since the lower degree cases are easier. 

For $n=1$, we need to find a rank 2 bundle $F^*$ of degree $g-1$, so that $h^0(X, \det F^*) = 0$ and $h^0(X, F^*) >0$. Let $L$ be a general line bundle of degree $g-1$ with $h^0(X, L) = 0$. Then a general extension $0 \to \ox \to F^* \to L \to 0$ satisfies the requirements.

For $n \ge 2$, consider a polystable bundle $F^*$ of the form $G \oplus H$, where $G \in U_X(2, g-1)$ and $H \in U_X \left( n-1, \frac{1}{2}(n-1)(g-1) \right)$. Note that 
\[
\wedge^2 F^* \ \cong \ (\det G) \oplus (\wedge^2 H) \oplus (G \otimes H).
\]
By Lange--Narasimhan \cite{LN}, the bundle $G$ has finitely many maximal line subbundles of degree zero. Choose a general such $G$ with a maximal line subbundle $M$. Then by the Hirschowitz lemma \cite[\S 4.6]{Hir} we have $h^0(X, G \otimes H) = 0$ for general $H$. Now put $\tilde{G} = G \otimes M^{-1}$ and $\tilde{H} = H \otimes M$. Then $h^0(X, \tilde{G}) >0$ and $h^0(X, \tilde{G} \otimes \tilde{H}) = 0$. The vanishing of $H^0(X, \wedge^2 \tilde{H})$ follows again from \cite[Lemma A.1]{CH2}. 

Now let $\tilde{F}^* = \tilde{G} \oplus \tilde{H}$. Since $h^0(X, \tilde{G}) > 0$, we have $h^0(X, \tilde{F}^*) > 0$.
To obtain the vanishing of $h^0(X, \wedge^2 \tilde{F}^*)$, we must show that $h^0(X, \det \tilde{G}) = 0$. Since $G$ has rank two, $\det G \otimes M^{-2} \cong \Hom(M, G/M)$. By generality of $M$ and $G$, there are no deformations of $M$ in $G$, so $h^0 (X, \Hom(M, G/M)) = 0$. 

The statement for $e > \frac{1}{2}(n+1)(g-1)$ is equivalent to the vanishing of $h^1(X, \wedge^2 F^*)$. By Serre duality, this is in turn equivalent to the vanishing of $h^0(X, \wedge^2 (F \otimes \kappa))$ for a theta characteristic $\kappa$. But if $e > \frac{1}{2}(n+1)(g-1)$, then $\deg (F \otimes \kappa) < \frac{1}{2}(n+1)(g-1)$ and we reduce to the case proven above.
\end{proof}

\begin{theorem} \label{thm4}
Let $V$ be a general orthogonal bundle in $MO_X(2n+1;t)$. If $t <(n+1)(g-1)$ then $V$ has a unique maximal Lagrangian subbundle. If $t = (n+1)(g-1)$ (resp. $t > (n+1)(g-1)$) is even, $V $ has a finite (resp. infinite) number of maximal Lagrangian subbundles.
\end{theorem}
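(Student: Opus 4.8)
The plan is to split according to the three ranges for $t$, reading off the dimension of $M(V)$ from the tangent space computation of Proposition \ref{max} together with Lemma \ref{h0F*}, and using the parameter space $\A_e$ to convert a tangent-space bound into an actual dimension. Throughout I set $e = t/2$, so that a maximal Lagrangian subbundle $E \subset V$ has degree $-e$ and $F := E^\perp$ fits into $0 \to E \to F \to \ox \to 0$, hence $F \in S_e \subset U_X(n+1,-e)$.

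First, for $t < (n+1)(g-1)$, i.e.\ $e < \frac{1}{2}(n+1)(g-1)$, the uniqueness is exactly Proposition \ref{finite}: a general orthogonal extension of rank $2n+1$ with a Lagrangian subbundle of degree $-e$ has no other Lagrangian subbundle of degree $\ge -e$, so $E$ is the unique maximal one. Nothing further is needed here. Next, for $t \ge (n+1)(g-1)$, let $V$ be general in the stratum with maximal Lagrangian subbundle $E$ and $F = E^\perp$. Since $V$ is general, the extension $0 \to E \to F \to \ox \to 0$ is a general extension of $\ox$ by a general $E$, so $F$ is a general member of $S_e$. By Proposition \ref{max} the Zariski tangent space to $M(V)$ at $E$ is $H^0(X, \wedge^2 F^*)$, which Lemma \ref{h0F*} evaluates. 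When $t = (n+1)(g-1)$, i.e.\ $e = \frac{1}{2}(n+1)(g-1)$, this vanishes, so $M(V)$ is zero-dimensional; as it is nonempty by definition of the stratum, $V$ has finitely many maximal Lagrangian subbundles.

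Finally, for $t > (n+1)(g-1)$ the same lemma gives $h^0(X, \wedge^2 F^*) = ne - \frac{1}{2}n(n+1)(g-1) > 0$, so the tangent space is positive-dimensional; to promote this to an actual positive-dimensional $M(V)$ I would invoke the parameter space. By Theorem \ref{thm3} (2) the stratum is dense in a component $MO_X(2n+1)^\pm$ (these are the only admissible values $t > (n+1)(g-1)$, by Corollary \ref{thm2}), so the moduli map $\alpha_e \colon \A_e \dashrightarrow MO_X(2n+1)$ is dominant onto it. Hence the general fiber $\alpha_e^{-1}(V) \subseteq M(V)$ has dimension $\dim \A_e - \dim MO_X(2n+1) = ne - \frac{1}{2}n(n+1)(g-1)$ by Proposition \ref{taut} (3). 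This is strictly positive, so $M(V)$ is infinite; note that it agrees exactly with the tangent-space dimension above, confirming that $M(V)$ is smooth of the expected dimension at a general point.

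The main obstacle will be the bookkeeping that identifies the fibers of $\alpha_e$ with an open dense part of $M(V)$ and guarantees that $F = E^\perp$ is genuinely general in $S_e$ for a general $V$. By Proposition \ref{finite} a general fiber of $\alpha_e$ consists of the maximal Lagrangian subbundles that are stable as vector bundles, so one must check that for a general $V$ the unstable Lagrangian subbundles, if any, do not contribute extra components or raise the dimension, so that the fiber dimension genuinely computes $\dim M(V)$. Both are genericity statements that should follow from the constructions in \S4, but they are the delicate part of the argument; the infiniteness conclusion itself, however, is safe via the dimension count alone, with Lemma \ref{h0F*} supplying the exact dimension and smoothness.
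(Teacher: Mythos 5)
Your proposal is correct and follows essentially the same route as the paper: uniqueness for $t < (n+1)(g-1)$ via Proposition \ref{finite}, and finiteness/infiniteness via the tangent-space identification of Proposition \ref{max} combined with Lemma \ref{h0F*}, with the fibers of $\alpha_e$ supplying the actual dimension of $M(V)$. Your explicit check that $\dim \A_e - \dim MO_X(2n+1)$ matches $h^0(X,\wedge^2 F^*)$ is a useful confirmation of what the paper leaves implicit, but it is not a different argument.
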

\begin{proof}
The uniqueness for $t <(n+1)(g-1)$ has already been proven in Proposition \ref{finite}. The finiteness for $t = (n+1)(g-1)$ comes from Proposition \ref{max} and Lemma \ref{h0F*}. The dimension of the space $M(V)$ of maximal Lagrangian subbundles for a general $V \in MO_X(2n+1)$ is also given by Lemma \ref{h0F*}.
\end{proof}

As in the even rank case \cite[\S 5.4]{CH3}, for a general $V \in MO_X(2n+1)^\pm$ the invariants $t(V)$ and $\dim M(V)$ depend on the congruence class of $N := (n+1)(g-1)$ modulo 4. By Lemma \ref{h0F*}, we have
\[
\dim M(V) \ = \ ne-\frac{1}{2} n(n+1)(g-1) \ = \ \frac{1}{2} nt - \frac{1}{2}n(n+1)(g-1) \ = \ \frac{n}{2}(t(V) - N).
\]
We deduce the following table, by analogy with that in \cite[\S 5.4]{CH3}:
\[ N \equiv 0 \mod 4: \quad \quad \begin{array}{c|c|c} t(V) & \hbox{Component} & \dim M(V) \\ \hline
N & MO_{X}(2n+1)^{+} & 0 \\
N + 2 & MO_{X}(2n+1)^{-} & \hbox{$n$} \end{array} \]

\[ N \equiv 1 \mod 4: \quad \quad \begin{array}{c|c|c} t(V) & \hbox{Component} & \dim M(V) \\ \hline
N + 1 & MO_{X}(2n+1)^{-} & \hbox{$n/2$} \\
N + 3 & MO_{X}(2n+1)^{+} & \hbox{$ {3n}/{2}$} \end{array} \]

\[ N \equiv 2 \mod 4: \quad \quad \begin{array}{c|c|c} t(V) & \hbox{Component} & \dim M(V) \\ \hline
N & MO_{X}(2n+1)^{-} & \hbox{0} \\
N + 2 & MO_{X}(2n+1)^{+} & \hbox{$n$} \end{array} \]

\[ N \equiv 3 \mod 4: \quad \quad \begin{array}{c|c|c} t(V) & \hbox{Component} & \dim M(V) \\ \hline
N + 1 & MO_{X}(2n+1)^{+} & \hbox{$n/2$} \\
N + 3 & MO_{X}(2n+1)^{-} & \hbox{$3n/2$} \end{array} \]
\vspace{0.5cm}

\section*{Acknowledgement}
 Insong Choe acknowledges that this paper was written as part of Konkuk University's research support program for its faculty on sabbatical leave in 2013. He would like to thank the Department of Mathematical Sciences in SNU for the support on his visiting during 2013. He also acknowledges the support of KIAS as an associate member in 2013. 
 
\vspace{1cm}

\noindent \footnotesize{Department of Mathematics, Konkuk University, 1 Hwayang-dong, Gwangjin-Gu, Seoul 143-701, Korea.\\
E-mail: \texttt{ischoe@konkuk.ac.kr}\\
\\
H\o gskolen i Oslo og Akershus, Postboks 4, St. Olavs plass, 0130 Oslo, Norway.\\
E-mail: \texttt{george.hitching@hioa.no}}


\begin{thebibliography}{99}

\bibitem {BPL} Brambila-Paz, L.; Lange, H.: \textsl{A stratification of the moduli space of vector bundles on curves}, J.\ Reine Angew.\ Math.\ \textbf{494} (1998), 173--187.

\bibitem{CH1} Choe, I.; Hitching, G.\ H.: \textsl{Secant varieties and Hirschowitz bound on vector bundles over a curve}, Manuscr.\ Math.\ \textbf{133} (2010), 465--477.

\bibitem {CH2} Choe, I.; Hitching, G.\ H.: \textsl{Lagrangian subbundles of symplectic vector bundles over a curve}, Math.\ Proc.\ Camb.\ Phil.\ Soc.\ \ \textbf{153} (2012), 193--214.

\bibitem {CH3} Choe, I.; Hitching, G.\ H.: \textsl{A stratification on the moduli space of symplectic and orthogonal bundles over a curve}, arXiv:1204.0834; submitted.

\bibitem {Hir} Hirschowitz, A.: \textsl{Probl\`emes de Brill--Noether en rang superieur}, Pr\'epublications Math\'ematiques n.\ 91, Nice (1986).


\bibitem {Hit1} Hitching, G.\ H.: \textsl{Subbundles of symplectic and orthogonal vector bundles over curves}, Math.\ Nachr.\ \textbf{280}, no.\ 13--14 (2007), 1510--1517.


\bibitem {HN} Holla, Y.\ I.; Narasimhan, M.\ S.: \textsl{A generalisation of Nagata's theorem on ruled surfaces}, Comp.\ Math.\ \textbf{127} (2001), 321--332.




\bibitem {Lange} Lange, H.; \textsl{Universal families of extensions}, J.\ Algebra \textbf{83} (1983), 101--112.

\bibitem {Lau} Laumon, G.: \textsl{Un analogue global du c\^one nilpotent}, Duke Math.\ J.\ \textbf{52},  (1988), 667--671.

\bibitem {LN} Lange, H.; Narasimhan, M.\ S.: \textsl{Maximal subbundles of rank two vector bundles on curves}, Math.\ Ann.\ \textbf{266}, no.\ 1 (1983), 55--72.

\bibitem {LaNe} Lange, H.; Newstead, P.\ E.: \textsl{Maximal subbundles and Gromov--Witten invariants}, A tribute to C.\ S.\ Seshadri (Chennai, 2002), 310--322, Trends Math., Birkh\"auser, Basel, 2003.


\bibitem {M} Mumford, D.: \textsl{Theta characteristics of an algebraic curve}, Annales Scientifiques de l'\'{E}.N.S. $4^e$ s\'{e}rie,  \  \textbf{4}, no. \ 2 (1971),  181--192.

\bibitem {NR} Narasimhan, M.\ S.; Ramanan, S.: \textsl{Deformations of the moduli space of vector bundles over an algebraic curve}, Ann.\ Math.\ (2) \textbf{101} (1975), 391--417.



\bibitem {Ram} Ramanan, S.: \textsl{Orthogonal and spin bundles over hyperelliptic curves}, Proc. Indian acad. Sci. (Math. Sci.) \textbf{90}, no.\ 2 (1981), 151--166.


\bibitem {Reid} Reid, M.: \textsl{The complete intersection of two or more quadrics}, Thesis, Cambridge (GB), (1972).

\bibitem {RT} Russo, B.; Teixidor i Bigas, M.: \textsl{On a conjecture of Lange}, J.\ Alg.\ Geom.\ \textbf{8} (1999), 483--496.

\bibitem{Serre} Serre, J.-P.: \textsl{Rev\^{e}tements \`{a} ramification impaire et th\^{e}ta-caract\'{e}ristiques}, C.\ R.\ Acad.\ Sci.\ Paris S\'{e}r.\ I Math.\ \textbf{311}, n.\ 9 (1990),  547--552.

\bibitem {Ser} Serman, O.: \textsl{Moduli spaces of orthogonal and symplectic bundles over an algebraic curve}, Compositio Math.\ \textbf{144} (2008), 3721--3733.

\bibitem {Sun} Sundaram, N.: \textsl{Special divisors and vector bundles}, Tohoku Math.\ J.\ \textbf{39} (1987), 175--213.

\end{thebibliography}
\end{document}